\def\ps@pprintTitle{
 \let\@oddhead\@empty
 \let\@evenhead\@empty
 \def\@oddfoot{}
 \let\@evenfoot\@oddfoot}
\def\ps@pprintTitle{%
  \let\@oddhead\@empty
  \let\@evenhead\@empty
  \let\@oddfoot\@empty
  \let\@evenfoot\@oddfoot
}
\renewcommand{\MaketitleBox}{
  \resetTitleCounters
  \def\baselinestretch{1}
  \begin{center}
    \def\baselinestretch{1}
    \Large \@title \par
    \vskip 18pt
    \normalsize\elsauthors \par
    \vskip 10pt
    \footnotesize \itshape \elsaddress \par
  \end{center}
  \vskip 12pt
}
\newcommand{\mb}{\mathbb}
\newcommand{\mc}{\mathcal}
\newcommand{\wh}{\widehat}
\newcommand{\br}{\breve}
\newcommand{\mbR}{{\mathds{R}}}
\newcommand{\Id}{{\mathbb{I}}}
\newcommand{\mfK}{{\mathfrak{K} }}
\newcommand{\hQ}{{\widehat{Q}}}
\newcommand{\hH}{{\widehat{H}}}
\DeclareMathOperator{\tr}{tr}
\newtheorem{theorem}{Theorem}
\newtheorem{lemma}[theorem]{Lemma}
\newtheorem{proposition}[theorem]{Proposition}
\newtheorem{corollary}[theorem]{Corollary}
\newtheorem{remark}{Remark}
\newtheorem{assumption}{Assumption}
\begin{document}
\begin{frontmatter}

\title{LQG Risk-Sensitive Single-Agent and Major-Minor Mean-Field Game Systems: A Variational Framework \footnote{This work was presented at the IMSI Distributed Solutions to Complex Societal Problems Reunion Workshop held in Chicago, US, from February 20th to 24th, 2023, and at the SIAM Conference on Financial Mathematics and Engineering held in Philadelphia, US, from June 6th to 9th, 2023.}
\footnote{Dena Firoozi and Michèle Breton would like to acknowledge the support of the Natural Sciences and Engineering Research Council of Canada
(NSERC), grants RGPIN-2022-05337 and RGPIN-2020-05053. H. Liu would like to acknowledge the support of GERAD and IVADO through, respectively,  Doctoral Scholarship 2023-2024, and the NSERC-CREATE Program on Machine Learning in Quantitative Finance and Business Analytics (Fin-ML CREATE).   
}}
\author[1]{Hanchao Liu}
\author[1]{Dena Firoozi}
\author[1]{Michèle Breton}
\address[1]{Department of Decision Sciences, HEC Montréal, Montréal, QC, Canada\\ (email: hanchao.liu@hec.ca, dena.firoozi@hec.ca, michele.breton@hec.ca)}

\begin{abstract}
We develop a variational approach to address risk-sensitive optimal control problems with an exponential-of-integral cost functional in a general linear-quadratic-Gaussian (LQG) single-agent setup, offering new insights into such problems. Our analysis leads to the derivation of a nonlinear necessary and sufficient condition of optimality, expressed in terms of martingale processes. Subject to specific conditions, we find an equivalent risk-neutral measure, under which a linear state feedback form can be obtained for the optimal control. It is then shown that the obtained feedback control is consistent with the imposed condition and remains optimal under the original measure. Building upon this development, we (i) propose a variational framework for general LQG risk-sensitive mean-field games (MFGs) and (ii) advance the LQG risk-sensitive MFG theory by incorporating a major agent in the framework. The major agent interacts with a large number of minor agents, and unlike the minor agents, its influence on the system remains significant even with an increasing 
 number of minor agents. We derive the Markovian closed-loop best-response strategies of agents in the limiting case where the number of agents goes to infinity. We establish that the set of obtained best-response strategies yields a Nash equilibrium in the limiting case and an $\varepsilon$-Nash equilibrium in the finite-player case. 
\end{abstract}

\begin{keyword}
Variational analysis, LQG system, Mean-field games \sep major agent \sep minor agent \sep  risk sensitivity \sep exponential utility
\end{keyword}

\end{frontmatter}

\section{Introduction}\label{sec_intro}
The concept of risk-sensitive optimal control was introduced in \citet{jacobson1973optimal} within a  linear-quadratic-Gaussian (LQG) framework. In these problems, the agent's utility is described by an exponential function of the total cost it incurs over time. Since their inception, risk-sensitive control problems have captured considerable interest in the literature. Notably, the theory has been extended to encompass nonlinear risk-sensitive problems (\citet{kumar1981optimal} and  \citet{nagai1996bellman}) and imperfect information (\citet{pan1996model}), leading to a broader understanding of these systems. Furthermore, different methodologies have been developed, each offering unique insights to address such problems (see, for example, \citet{duncan2013linear} and \citet{lim2005new}). \citet{bacsar2021robust} provides an extensive overview of the literature on this topic. 

Mean-field game (MFG) theory concerns the study and analysis of dynamic games involving a large number of indistinguishable agents who are asymptotically negligible.  In such games, each agent is weakly coupled with the empirical distribution of other agents whose mathematical limit, as the number of agents goes to infinity, is called the mean-field distribution. In such games, the behavior of players in large populations, along with the resulting equilibrium, may be approximated by the solution of infinite-population games (see e.g. \citet{lasry2007mean, caines2021mean,carmona2018probabilistic,bensoussan2013mean,carmona2013mean,cardaliaguet2019master}).

A notable advancement of MFG theory involves the integration of the so-called \emph{major} agents within the established framework. Unlike minor agents, whose impact decreases as the number of agents increases, the impact of a major agent is not negligible and does not collapse when the size of the population tends to infinity 
(\citet{huang2010large,firoozi2020convex,carmona2016probabilistic}). Various interpretations of such systems have been proposed. In the area of investment finance, for instance, one can consider that institutional and private investors' decisions do not have a commensurable impact on the market (see e.g. \cite{FirooziISDG2017,huang2019mean}).

MFGs find relevance in various domains, and particularly within financial markets, they emerge as natural modeling choices for addressing a wide array of issues. Notably, applications have been proposed in systemic risk (\citet{carmona2013mean,Garnier2013LargeDeviationsMean,Bo2015SystemicRiskInterbanking, chang2022Systemic}), price impact and optimal execution (\citet{casgrain_meanfield_2020, FirooziISDG2017,Cardaliaguet2018Meanfieldgame, carmona2015probabilistic,huang2019mean}), cryptocurrencies (\citet{li2023mean}), portfolio trading (\citet{lehalle2019mean}), equillibrium pricing (\citet{Firoozi2022MAFI,gomes_mean-field_2018,fujii_mean_2021}), and market design (\citet{shrivats2021principal}). 
In economics and finance, it is well recognized that attitude toward risk, or \emph{risk sensitivity}, plays an important role in the determination of agents' optimal decisions or strategies (\citet{bielecki2000risk,bielecki2003economic,fleming2000risk}). 

The study of risk-sensitive models is crucial as risk-neutral models often fall short in capturing all the behaviors observed in reality. This consideration is especially pertinent in many economic and financial contexts as risk sensitivity, and its disparity among players, needs to be accounted for when characterizing equilibrium strategies. This is also the case in the area of mean-field games, where recent developments were proposed to address risk-sensitive MFGs (\citet{tembine2013risk, saldi2018discrete,saldi2022partially,moon2016linear,moon2019risk}).

In this paper, we develop a variational approach to address risk-sensitive optimal control problems with an exponential-of-integral cost functional in a general LQG single-agent setup, drawing inspiration from  \citet{firoozi2020convex}. Our analysis leads to the derivation of a nonlinear necessary and sufficient condition of optimality, expressed in terms of martingale processes. Subject to specific conditions we find an equivalent risk-neutral measure, under which a linear state feedback form can be obtained for the optimal control. It is then shown that the obtained feedback control is consistent with the imposed condition and remains optimal under the original measure. Building upon this development, we (i) propose a variational framework for general LQG risk-sensitive MFGs and (ii) extend the LQG risk-sensitive MFG theory by incorporating a major agent in the framework. We derive the Markovian closed-loop best-response strategies of agents in the limiting case where the number of agents goes to infinity. We establish that the set of obtained best-response strategies yields a Nash equilibrium in the limiting case and an $\varepsilon$-Nash equilibrium in the finite-player case. We conclude the paper by presenting illustrative numerical experiments.

More specifically, the contributions of this work include the following: 
\begin{itemize}
     \item We develop a variational approach to solve risk-sensitive optimal control problems. This approach offers new perspectives on the inherent nature of risk-sensitive problems, distinct from existing methodologies. Specifically: (i) it demonstrates how the nonlinearity of exponential risk-sensitive cost functionals can be translated into a necessary and sufficient condition for optimality involving a quotient of martingale processes; 
     (ii) by establishing an equivalent risk-neutral measure in terms of the state and control processes, it explains the connection between the model and its counterpart under the risk-neutral measure. This technique enables the derivation of explicit solutions, even in the presence of the nonlinear term within the necessary and sufficient condition of optimality that involves the state and control processes; (iii) it allows a deeper understanding of risk-sensitivity's implications by  effectively tracing the impact of risk-sensitivity on the propagation of policy perturbations throughout the system; and (iv) it extends the applicability of variational analysis to the context of risk-sensitive problems with exponential cost functionals and facilitates the characterization of optimal strategies for complex or nonclassical setups.

    \item We advance the theory of LQG risk-sensitive MFGs by incorporating a non-negligible major agent, whose impact on the system remains significant even as the number of minor agents goes to infinity. To the best of our knowledge, the literature has not yet explored such MFGs. In particular, our work stands apart from \citet{ChenESAIM2023}, where a risk-sensitive MFG involving a group of asymptotically negligible major agents is considered and the average state of this group appears in the model. Unlike that scenario, our model features the significant influence of the major agent in the limiting model. 
      
   \item Building upon the the aforementioned developments, we propose a variational framework for general LQG risk-sensitive MFGs with a major agent. Through this framework, we gain valuable insights into the interplay among risk sensitivity, the major agent and minor agents within the context of LQG MFGs. Specifically, it illustrates: (i) the impact of the major agent's risk sensitivity on the propagation of its policy perturbations throughout the system (i.e. on individual minor agents), and hence the formation of the aggregate effect of minor agents (i.e. the mean field) and the equilibrium, and (ii) the impact of a representative minor agent's risk sensitivity on the propagation of its policy perturbations across the system (i.e. on other minor agents and the major agent) and hence the formation of the the mean field and the equilibrium. More precisely, using this variational approach, we derive the Nash equilibrium under the infinite-population setup. Then, we establish that the equilibrium strategies for the infinite-population case lead to an $\varepsilon$-Nash equilibrium for the finite-population game. Our proof of the $\varepsilon$-Nash property differs from existing ones in the literature on MFGs with a major agent (\citet{huang2010large, carmona2016probabilistic}) and on risk-sensitive MFGs (\citet{moon2019risk,moon2016linear}), leveraging the specific conditions of the model under investigation. We further investigate and provide insights into the impact of agents' risk sensitivity on the equilibrium by conducting a comparative analysis with the risk-neutral case. 
  \end{itemize}

This paper is organized as follows. Section \ref{sec_lit} reviews the relevant literature. Section \ref{sec_control} outlines the variational approach 
developed to solve LQG risk-sensitive single-agent optimal control problems, which will be used to characterize the best response of MFG agents in the subsequent section. Section \ref{sec_game} employs the developed variational analysis to the LQG risk-sensitive MFGs with major and minor agents, in order to obtain the Markovian closed-loop Nash equilibrium of infinite-population MFGs, and it shows that this Nash equilibrium provides an approximate equilibrium for the finite-population game. Section \ref{sec_conclusion} provides a brief conclusion.

\section{Literature Review}\label{sec_lit}
\subsection{Risk-Sensitive Single-Agent and Mean-Field Game Systems}\label{sec_lit_1}

Risk-sensitive optimal control problems were introduced in \cite{jacobson1973optimal} in a finite horizon LQG setting, where the agent's utility is an exponential function of its total cost over time. 
Later, \cite{pan1996model} studied linear singularly perturbed systems with long-term time-average exponential quadratic costs under perfect and noisy state measurements. 
A generalization to the case where costs are not quadratic functions was proposed in \cite{kumar1981optimal} and \cite{nagai1996bellman}, and infinite horizon risk-sensitive optimal control problems in discrete-time were addressed in \cite{whittle1981risk}. More recently, an alternative approach based on the first principles was suggested in \cite{duncan2013linear}, providing additional insights into the solution of such systems. 
This method was subsequently applied in \cite{duncan2015linear} to characterize the Nash equilibrium of a two-player noncooperative stochastic differential game. Additionally, maximum principle for risk-sensitive control was established in \cite{lim2005new}. Furthermore, a risk-sensitive LQ optimal
control problem involving a large number of agents with  mean field interactions was explored in \cite{wang2024large}.
 
Mean-field games have recently been extended to a risk-sensitive context. A general setup for risk-sensitive MFGs was proposed in \cite{tembine2013risk}. 
Later, \cite{moon2016linear} studied a MFG involving heterogeneous agents with linear dynamics and an exponential quadratic integral cost using the Nash certainty equivalence (NCE) method (see \cite{huang2006large}). 
Recently, \cite{moon2019risk} developed a stochastic maximum principle for risk-sensitive MFGs over a finite horizon. Similar results were obtained for the discrete-time setup  \cite{saldi2018discrete}. Additionally, \cite{Minyi-Wang2025risk} developed a direct approach to LQ risk-sensitive mean-field games. A risk-sensitive MFG involving two large-population groups of agents with different impacts on the system was studied in \cite{ChenESAIM2023}, where the average state of each group appears in the model. Risk-sensitive MFGs with common noise were explored and applied to systemic risk in interbank markets in \cite{FirooziRen-2024-common-noise}.     

Other research avenues that are closely related to risk-sensitive MFGs include robust MFGs (see e.g. \citet{HuangRobust2017, HuangJaimungal2017}), where model ambiguity is incorporated into the optimization problem.

\subsection{MFGs with Major and Minor Agents}
An advancement of the MFG theory incorporates the interaction between major and minor agents, and studies (approximate) Nash equilibria between them. To address major-minor MFGs, various approaches have been proposed, including NCE (\citet{huang2010large}, \citet{nourian2013epsilon}), probabilistic approaches involving the solution of forward-backward stochastic differential equations (FBSDE) (\citet{carmona2016probabilistic}, \citet{carmona2017alternative}), asymptotic solvability (\citet{huang2019linear}), master equations (\citet{lasry2018mean}, \citet{cardaliaguet2019master}), and convex analysis (\citet{firoozi2020convex}), the last of which being developed under the LQG setup. 
These approaches have been shown to yield equivalent Markovian closed-loop solutions (\citet{huang2020linear}, \citet{firoozi2022lqg}) in the limit. Major-minor MFGs have been developed in various directions, including scenarios involving partial observations (\cite{sen2016mean,firoozi2020epsilon}) and those with switching and stopping strategies (\cite{FIROOZI2022-hybrid}). Another line of research characterizes a Stackelberg equilibrium between the major agent and the minor agents; see e.g. \citet{BensoussanSICON2017,BasarMoon2018}.

In the next section, we develop a variational analysis for LQG risk-sensitive optimal control problems. This analysis will serve as the foundation for addressing major-minor MFG systems, which will be discussed in detail in the subsequent section.

\section{Variational Approach to LQG Risk-Sensitive Optimal Control Problems}
\label{sec_control}
We begin by examining (single-agent) risk-sensitive optimal control problems under a general linear-quadratic-Gaussian  framework. This approach will allow us to streamline the notation and enhance the clarity of the subsequent expositions. Building on the variational analysis method proposed in \citet{firoozi2020convex}, we extend the methodology by using a change of measure technique to derive the optimal control actions for LQG risk-sensitive problems with exponential cost functionals. These results will then be used in the subsequent section to determine the best-response strategies of both the major agent and a representative minor agent.

We consider a general LQG risk-sensitive model with dynamics given by  
\begin{equation} \label{dynamic}
dx_{t}=(Ax_{t}+Bu_{t}+b(t))dt+\sigma (t)dw_{t}
\end{equation}
where $x_{t}\in \mathbb{R}^n$ and $u_{t}\in \mathbb{R}^{m}$ are respectively the state and control vectors at $t$ and $w_{t}\in \mathbb{R}^{r}$ is a standard $r$-dimensional Wiener process defined on a given filtered probability space $\left ( \Omega,\mathcal{F},\{\mathcal{F}_{t}\}_{t\in \mc{T}},\mathbb{P} \right )$,   $\mc{T}=\left[ 0,T \right]$ with $T>0$ fixed. Moreover, we define $A$ and $B$ to be constant matrices with compatible dimensions, and $b(t)$ and $\sigma(t)$ to be deterministic continuous functions on $\mc{T}$. 

The risk-sensitive cost functional $J(u)$ to be minimized is given by 
\begin{equation} \label{cost}
J(u)=\mathbb{E}\left[ \exp{\left(\delta \Lambda_{T}(u)\right)} \right], \end{equation}
where
\begin{equation}
 \Lambda_{T}(u):= \frac{1}{2} \int_{0}^{T}\Big(\left<Qx_{s},x_{s} \right> +2\left<Su_{s},x_{s} \right>+\left<Ru_{s}, u_{s}\right>-2\left<\eta ,x_{s}  \right>-2\left<\zeta ,u_{s} \right> \Big)  ds+\frac{1}{2}\left< \widehat{Q}X_{T},X_{T}\right>.\label{costG}
\end{equation}

All the parameters ($Q,S,R,\eta, \zeta, \widehat{Q}$) in the above cost functional are vectors or matrices of an appropriate dimension. The positive scalar constant $\delta$ represents the degree of risk sensitivity, with $0<\delta< \infty$ modeling a risk-averse behavior. It is worth noting that the risk-neutral
    cost functional $\mathbb{E}\left[ {\Lambda_{T}(u)} \right]$ can be seen as the limit of the risk-sensitive cost functionals $\frac{1}{\delta}\log\mathbb{E}\left [ \mathrm{exp}\left (\delta \Lambda_{T}(u) \right ) \right ]$ or $\frac{1}{\delta}\mathbb{E}\left [ \mathrm{exp}\left (\delta \Lambda_{T}(u) \right ) - 1 \right ]$ when $\delta \rightarrow 0$. Both of these risk-sensitive cost functionals yield the same optimal control action as the cost functional \eqref{cost}-\eqref{costG}, due to the strictly increasing property of the logarithm and linear functions, respectively. For notational convenience, our analysis will focus on the cost functional \eqref{cost}-\eqref{costG}.

The following assumption provides the conditions under which the cost functional $\Lambda_{T}(u)$ is strictly convex. 
\begin{assumption}\label{convexity} $R>0$, $\widehat{Q}\geq 0$, and $Q-SR^{-1}S^{\intercal } \geq 0$.
\end{assumption}

The filtration $\mc{F}=(\mc{F}_t)_{t\in \mc{T}}$, with $\mathcal{F}_{t}:=\sigma (x_{s};0\leq s\leq t)$, which is the $\sigma$-algebra generated by the process $x_{t}$, constitutes the information set of the agent. Subsequently, the admissible set $\mc{U}$ of control actions is the Hilbert space  consisting of all $\mathcal{F}$-adapted $\mathbb{R}^{m}$-valued processes such that $\mathbb{E}\left [ \int_{0}^{T} \left \| u_{t} \right \|^{2}dt\right ]< \infty $, where $\left \| . \right \|$ is the corresponding Euclidean norm. 
{\begin{proposition} \label{strict_convexity}
Suppose \Cref{convexity} holds. Then the cost functional \eqref{cost}-\eqref{costG} is strictly convex.
\end{proposition}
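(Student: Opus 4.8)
The plan is to establish strict convexity in two stages: first for the pathwise quadratic functional $\Lambda_T$, and then to transfer this property through the exponential to $J$. I would begin by recording that the map $u \mapsto x$ defined by \eqref{dynamic} is affine: if $x^1, x^2$ are the state trajectories driven by $u_1, u_2$ under the \emph{same} Brownian path, then $\theta x^1 + (1-\theta)x^2$ solves \eqref{dynamic} with control $\theta u_1 + (1-\theta)u_2$, since the terms $b(t)$ and $\sigma(t)$ are control-independent. Crucially, the difference $\Delta x := x^1 - x^2$ solves the \emph{deterministic} linear ODE $d\Delta x = (A\Delta x + B\Delta u)\,dt$, $\Delta x_0 = 0$, with $\Delta u := u_1 - u_2$, because the Wiener increments cancel. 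The standard second-difference identity for the quadratic-affine integrand and terminal term of \eqref{costG} then yields
\[
\theta \Lambda_T(u_1) + (1-\theta)\Lambda_T(u_2) - \Lambda_T(\theta u_1 + (1-\theta)u_2) = \tfrac{1}{2}\theta(1-\theta)\, \mathcal{G}(\Delta u),
\]
where the linear terms in $\eta$ and $\zeta$ cancel and $\mathcal{G}(\Delta u)$ is the deterministic quantity $\int_0^T \big(\langle Q\Delta x, \Delta x\rangle + 2\langle S\Delta u, \Delta x\rangle + \langle R\Delta u, \Delta u\rangle\big)\,ds + \langle \widehat{Q}\Delta x_T, \Delta x_T\rangle$.

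Next I would verify positivity of $\mathcal{G}$. Using $R>0$, I complete the square pointwise in $s$, rewriting the running integrand as $\langle R v_s, v_s\rangle + \langle (Q - SR^{-1}S^{\intercal})\Delta x_s, \Delta x_s\rangle$ with $v_s := \Delta u_s + R^{-1}S^{\intercal}\Delta x_s$. Under \Cref{convexity}, both summands and the terminal term are nonnegative, so $\mathcal{G}(\Delta u) \geq \int_0^T \langle R v_s, v_s\rangle\, ds \geq 0$, which already gives convexity. For \emph{strict} convexity it remains to show $\mathcal{G}(\Delta u) > 0$ whenever $\Delta u \neq 0$ in $\mc{U}$. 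If $\mathcal{G}(\Delta u) = 0$, then $R>0$ forces $v_s \equiv 0$ a.e., i.e. $\Delta u_s = -R^{-1}S^{\intercal}\Delta x_s$; substituting into the ODE gives $d\Delta x = (A - BR^{-1}S^{\intercal})\Delta x\, dt$ with $\Delta x_0 = 0$, whose unique solution is $\Delta x \equiv 0$, forcing $\Delta u \equiv 0$, a contradiction. This establishes pathwise strict convexity of $\Lambda_T$, with a strictly positive deterministic gap.

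Finally, I would transfer this through the exponential. Since the gap above is deterministic and strictly positive, $\Lambda_T(\theta u_1 + (1-\theta)u_2)(\omega) < \theta \Lambda_T(u_1)(\omega) + (1-\theta)\Lambda_T(u_2)(\omega)$ holds for every $\omega$; combining the strict monotonicity and the convexity of $t \mapsto \exp(\delta t)$ gives, pathwise, $\exp(\delta\Lambda_T(\theta u_1 + (1-\theta)u_2)) < \theta\exp(\delta\Lambda_T(u_1)) + (1-\theta)\exp(\delta\Lambda_T(u_2))$, and taking $\mathbb{E}[\cdot]$ of this everywhere-strict inequality yields $J(\theta u_1 + (1-\theta)u_2) < \theta J(u_1) + (1-\theta)J(u_2)$. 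The main obstacle I anticipate is the rigidity argument of the second stage — showing that the vanishing of $\mathcal{G}$ can only occur through the degenerate feedback relation that the ODE/controllability argument then rules out — together with the technical bookkeeping needed to pass the strict pointwise inequality under the expectation, which presumes the exponential moments $J(u_1), J(u_2)$ are finite on the admissible domain.
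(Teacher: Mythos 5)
Your overall skeleton---completing the square, an ODE rigidity argument showing that vanishing of the convexity gap forces $\Delta u \equiv 0$, and then transferring strictness through the strictly increasing, strictly convex exponential---is exactly the skeleton of the paper's proof, and your sign convention $v_s = \Delta u_s + R^{-1}S^{\intercal}\Delta x_s$ is the algebraically consistent one for \eqref{costG}. However, there is a genuine gap in your probabilistic bookkeeping: you call $\mathcal{G}(\Delta u)$ a \emph{deterministic} quantity and claim the strict pathwise inequality $\Lambda_{T}(\theta u_1+(1-\theta)u_2)(\omega) < \theta\Lambda_{T}(u_1)(\omega)+(1-\theta)\Lambda_{T}(u_2)(\omega)$ holds \emph{for every} $\omega$. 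This is false. Admissible controls in $\mathcal{U}$ are $\mathcal{F}$-adapted stochastic processes, so $\Delta u$, $\Delta x$, and hence $\mathcal{G}(\Delta u)$ are random. The hypothesis that $u_1 \neq u_2$ as elements of $\mathcal{U}$ means only $\mathbb{E}\left[\int_0^T \|\Delta u_t\|^2 dt\right] > 0$ (the paper's condition \eqref{diff-cont}); it does not prevent the event on which $\Delta u_\cdot(\omega) \equiv 0$ (a.e.\ in $t$) from having positive probability. Concretely, take $u_2 \in \mathcal{U}$ arbitrary and $u_1 = u_2 + \mathds{1}_F\, h$ with $h$ a nonzero deterministic function and $F \in \mathcal{F}_0$ an event with $0 < \mathbb{P}(F) < 1$: on $F^c$ the two state trajectories coincide, $\mathcal{G}(\Delta u)(\omega) = 0$, and your inequality degenerates to an equality there, so the ``everywhere-strict'' inequality you propose to integrate simply does not hold.

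The repair is precisely the step the paper supplies and your write-up is missing. Your rigidity argument is correct \emph{pathwise} (it is the same contraposition the paper uses to establish \eqref{concon}): for each fixed $\omega$, $\mathcal{G}(\Delta u)(\omega) = 0$ forces $\Delta u_\cdot(\omega) \equiv 0$, hence $\mathcal{G}(\Delta u)(\omega) > 0$ exactly on the event $E = \left\{\omega : \int_0^T \|\Delta u_t(\omega)\|^2 dt > 0\right\}$ (intersected with the full-probability set where all relevant time integrals are finite), and \eqref{diff-cont} guarantees $\mathbb{P}(E) > 0$ but not $\mathbb{P}(E)=1$. On $E$ you then get the strict inequality after applying the exponential; on $E^c$ you get only the weak inequality from ordinary convexity. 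Splitting the expectation as $\mathbb{E}[\,\cdot\,\mathds{1}_E] + \mathbb{E}[\,\cdot\,\mathds{1}_{E^c}]$ and adding a strict inequality on a positive-probability set to a weak one on its complement still yields $J(\theta u_1 + (1-\theta)u_2) < \theta J(u_1) + (1-\theta)J(u_2)$; this is exactly the role of the paper's inequalities \eqref{e3} and \eqref{ec3}. With that event decomposition inserted, your argument becomes a correct proof essentially identical to the paper's; your closing caveat about finiteness of the exponential moments is a reasonable one (shared, implicitly, by the paper), but it is not where the real difficulty lies.
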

\begin{proof}
By applying the standard method of completing the square to $\Lambda_{T}(u)$, given by \eqref{costG}, we obtain
\begin{align}
 \Lambda_{T}(u):= \frac{1}{2} \int_{0}^{T}\Big(&\left<\tilde{Q}x_{s},x_{s} \right> +\left<R(u_{s}-R^{-1}S^{\intercal}x_{s}),u_{s}-R^{-1}S^{\intercal}x_{s}\right>-2\left<\zeta ,u_{s}-R^{-1}S^{\intercal}x_{s} \right> \notag\\ &-2\left<\eta ,x_{s}  \right>+2\left<\zeta ,R^{-1}S^{\intercal}x_{s}) \right>\Big)  ds+\frac{1}{2}\left< \widehat{Q}X_{T},X_{T}\right>. \label{cost-3}  
\end{align}
Let $\widetilde{u}$ and $\widehat{u}$ be two different elements in $\mc{U}$ such that 
\begin{equation}
 \mathbb{E}\left [ \int_{0}^{T} \left \| \widetilde{u}_t- \widehat{u}_t\right \|^{2}dt\right ] > 0.   \label{diff-cont}
\end{equation}
Then we have
\begin{equation} \label{concon}
\mathbb{E}\left [ \int_{0}^{T} \left \| \widetilde{u}_t- \widehat{u}_t-R^{-1}S^{\intercal}(\widetilde{x}_s-\widehat{x}_s)\right \|^{2}dt\right ] > 0. 
\end{equation}
To verify the validity of the above statement, we proceed by proof by contraposition. Suppose \eqref{concon} does not hold, i.e, $\mathbb{E}\left [ \int_{0}^{T} \left \| \widetilde{u}_t- \widehat{u}_t-R^{-1}S^{\intercal}(\widetilde{x}_s-\widehat{x}_s)\right \|^{2}dt\right ] = 0$, or equivalently $\widetilde{u}_{t}-R^{-1}S^{\intercal}\widetilde{x}_{t}=\widehat{u}_{t}-R^{-1}S^{\intercal}\widehat{x}_{t},\, \mathbb{P} \times \mu$-almost everywhere, where $\mu$ stands for the Lebesgue measure. In this case, from \eqref{dynamic}, $\widetilde{x}_t-\widehat{x}_t$ satisfies 
\begin{equation}
d(\widetilde{x}_t-\widehat{x}_t)=[(A +BR^{-1}S^{\intercal})(\widetilde{x}_t-\widehat{x}_t)]dt,
\end{equation}
with $\widetilde{x}_0-\widehat{x}_0=0$. Thus, $\widetilde{x}_t=\widehat{x}_t,\, \mathbb{P} \times \mu$-almost everywhere, which leads to 
\begin{equation} \label{concon1}
\mathbb{E}\left [ \int_{0}^{T} \left \| \widetilde{u}_t- \widehat{u}_t-R^{-1}S^{\intercal}(\widetilde{x}_s-\widehat{x}_s)\right \|^{2}dt\right ]= \mathbb{E}\left [ \int_{0}^{T} \left \| \widetilde{u}_t- \widehat{u}_t\right \|^{2}dt\right ]=0.
\end{equation}
Thus, \eqref{concon} holds for any $\widetilde{u} \in \mc{U}$ and $\widehat{u} \in \mc{U}$ satisfying \eqref{diff-cont}. We then define the following sets  
    \begin{align}
     &E_1:=\left\{\omega \in \Omega: \int_{0}^{T}(\left\|\widetilde{x}_t \right\|^{2}+\left\|\widehat{x}_t\right\|^{2}+\left\|\widetilde{u}_t \right\|^{2}+\left\|\widehat{u}_t \right\|^{2})dt < \infty  \right\}, \notag \\ 
     &E_2:=\left \{\omega \in \Omega: \int_{0}^{T} \left \| \widetilde{u}_t- \widehat{u}_t-R^{-1}S^{\intercal}(\widetilde{x}_s-\widehat{x}_s)\right \|^{2}dt >0 \right \},\notag\\
     &E:=E_1 \cap E_2.\notag
    \end{align}
Since $P(E_1)=1$ and $P(E_2)>0$, it follows that $P(E)>0$. Hence, $\forall \omega \in E$, we have $\widetilde{u}_{t}-R^{-1}S^{\intercal}\widetilde{x}_{t} \neq \widehat{u}_{t}-R^{-1}S^{\intercal}\widehat{x}_{t}$ on a set within the Borel $\sigma$-algebra $\mathcal{B}(\mc{T})$ that has a positive Lebesgue measure. Subsequently, given that $R > 0$, we can easily show that $\forall \omega \in E$
    \begin{align}
        &\int_{0}^{T}  \lambda \left<R(\widetilde{u}_{t}-R^{-1}S^{\intercal}\widetilde{x}_{t}), \widetilde{u}_{t}-R^{-1}S^{\intercal}\widetilde{x}_{t}\right> +(1-\lambda)\left<R(\widehat{u}_{t}-R^{-1}S^{\intercal}\widehat{x}_{t}), \widehat{u}_{t}-R^{-1}S^{\intercal}\widehat{x}_{t}\right>dt  \allowdisplaybreaks\\  &>\!\!\int_{0}^{T}\!\! \Big<R(\lambda (\widetilde{u}_{t}\!-\!R^{-1}S^{\intercal}\widetilde{x}_{t})+(1-\lambda)(\widehat{u}_{t}\!-\!R^{-1}S^{\intercal}\widehat{x}_{t})),\lambda (\widetilde{u}_{t}\!-\!R^{-1}S^{\intercal}\widetilde{x}_{t})\notag  +(1-\lambda)(\widehat{u}_{t}\!-\!R^{-1}S^{\intercal}\widehat{x}_{t})\Big> dt\notag \allowdisplaybreaks\\  =\!\! &\int_{0}^{T}\!\! \Big<R(\lambda \widetilde{u}_{t }+(1-\lambda)\widehat{u}_{t}\!-\!R^{-1}S^{\intercal}(\lambda \widetilde{x}_{t}+(1-\lambda)\widehat{x}_{t})),\lambda \widetilde{u}_{t }+(1-\lambda)\widehat{u}_{t}\!-\!R^{-1}S^{\intercal}(\lambda \widetilde{x}_{t}+(1-\lambda)\widehat{x}_{t})\Big> dt.  \notag
      \end{align}
      Moreover, \Cref{convexity} guarantees that all other terms in \eqref{cost-3} are convex (not necessarily strictly convex). Hence, $\forall \omega \in E$, we can conclude that 
\begin{equation} \label{sc1}
\lambda \Lambda_{T}(\widetilde{u})+(1-\lambda)\Lambda_{T}(\widehat{u}) > \Lambda_{T}(\lambda \widetilde{u}+(1-\lambda)\widehat{u}).
\end{equation}
Furthermore, the exponential function is both strictly increasing and strictly convex. By leveraging this property in conjunction with \eqref{sc1}, $\forall \omega \in E$, we obtain
\begin{equation}
\lambda \exp{\left(\delta \Lambda_{T}(\widetilde{u})\right)}+(1-\lambda)  \exp{\left(\delta \Lambda_{T}(\widehat{u})\right)} > \exp \delta{\Lambda_{T}(\lambda \widetilde{u}+(1-\lambda)\widehat{u})}. \label{c}
\end{equation}
Therefore, we have 
\begin{equation} \label{e3}
\mathbb{E}\left [  (\lambda \exp{\left(\delta \Lambda_{T}(\widetilde{u})\right)}+(1-\lambda)  \exp{\left(\delta \Lambda_{T}(\widehat{u})\right)})\mathds{1}_{E} \right ] > \mathbb{E}\left [\exp \delta{\Lambda_{T}(\lambda \widetilde{u}+(1-\lambda)\widehat{u})}\mathds{1}_{E}\right], 
\end{equation}
where $\mathds{1}_{E}$ denotes an indicator function defined on $E$. Moreover, it is evident that, due to the convexity of $\Lambda_{T}$, we have $\forall \omega \in E^{c}$ 
\begin{equation}
\lambda \exp {\left(\delta \Lambda_{T}(\widetilde{u})\right)}+(1-\lambda)  \exp{\left(\delta \Lambda_{T}(\widehat{u})\right)} \geq \exp {\delta \Lambda_{T}(\lambda \widetilde{u}+(1-\lambda)\widehat{u})}.\label{c1}
\end{equation}
Hence, we obtain
\begin{equation} \label{ec3}
\mathbb{E}\left [  (\lambda \exp{\left(\delta \Lambda_{T}(\widetilde{u})\right)}+(1-\lambda)  \exp{\left(\delta \Lambda_{T}(\widehat{u})\right)})\mathds{1}_{E^{c}} \right ] \geq \mathbb{E}\left [\exp \delta{\Lambda_{T}(\lambda \widetilde{u}+(1-\lambda)\widehat{u})}\mathds{1}_{E^{c}}\right].
\end{equation}
Finally, the strict convexity of the cost functional given by \eqref{cost}-\eqref{costG} is established by adding together \eqref{e3} and \eqref{ec3}. 
\end{proof}}
The following theorem provides the Gâteaux derivative of the cost functional for the system described by \eqref{dynamic}-\eqref{cost}.
\begin{proposition}\label{thm:Cntrl_initial}
For any $u, \omega \in \mathcal{U}$, we have 
\begin{align} \label{gtder}
\lim_{\epsilon  \to 0}\frac{J\left ( u+\epsilon \omega  \right )-J\left ( u \right )}{\epsilon }
=\delta \mathbb{E}\Bigg[ \int_{0}^{T}\omega ^{\intercal}\Big[B^{\intercal}e^{-A^{\intercal}t}M_{2,t}+M_{1,t}(Ru_{t}+S^{\intercal}x_{t}-\zeta \notag\\\hspace{3cm}+B^{\intercal}\int_{0}^{t}e^{A^{\intercal}(s-t)}(Qx_{t}+Su_{t}-\eta)ds )\Big]dt\Bigg]
\end{align}
where
\begin{align}
M_{1,t}(u)&= \mathbb{E}\left [ e^{\delta \Lambda_{T}(u)}\Big | \mathcal{F}_{t} \right ] \label{M1t} \\
M_{2,t}(u)&= \mathbb{E}\left [ e^{\delta \Lambda_{T}(u)}(e^{A^{\intercal}T}\widehat{Q}x_{t}+\int_{0}^{T}e^{A^{\intercal}s}(Qx_{s}+Su_{s}-\eta)ds)\Big | \mathcal{F}_{t} \right ]. \label{M2t}
\end{align}
\end{proposition}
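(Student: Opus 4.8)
The plan is to compute the limit directly as a Gâteaux derivative: differentiate the exponential-of-quadratic functional pathwise, interchange the derivative with the expectation, and then reorganize the result through Fubini's theorem and conditioning so that the martingales $M_{1,t}$ and $M_{2,t}$ emerge.

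First I would perturb the control, writing $x^\epsilon$ for the state generated by $u+\epsilon\omega$. Because the diffusion coefficient $\sigma(t)$ in \eqref{dynamic} carries no control dependence, the map $\epsilon\mapsto x^\epsilon$ is affine and the sensitivity $y_t:=\partial_\epsilon x^\epsilon_t$ solves the \emph{pathwise} linear ODE
\[
dy_t=(Ay_t+B\omega_t)\,dt,\qquad y_0=0,
\]
so that $x^\epsilon_t=x_t+\epsilon y_t$ holds exactly with $y_t=\int_0^t e^{A(t-s)}B\omega_s\,ds$. Substituting this into \eqref{costG} shows that $\Lambda_T(u+\epsilon\omega)$ is a quadratic polynomial in $\epsilon$ whose linear coefficient is the directional derivative
\[
D\Lambda_T(u)[\omega]=\int_0^T\!\big(\langle Qx_s+Su_s-\eta,\,y_s\rangle+\langle Ru_s+S^\intercal x_s-\zeta,\,\omega_s\rangle\big)\,ds+\langle\widehat{Q}x_T,\,y_T\rangle.
\]

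The first genuinely analytic step is to differentiate under the expectation. Via the chain rule, $\partial_\epsilon e^{\delta\Lambda_T(u+\epsilon\omega)}=\delta\, e^{\delta\Lambda_T(u+\epsilon\omega)}\,D\Lambda_T(u+\epsilon\omega)[\omega]$, and I would dominate this family uniformly for small $\epsilon$ by an integrable random variable. This is where the main difficulty lies: $\Lambda_T$ is quadratic in Gaussian-type processes, so $e^{\delta\Lambda_T}$ and its product with the (also quadratic) factor $D\Lambda_T$ have only exponentially heavy tails; one must combine the coercivity implicit in \Cref{convexity} with Gaussian moment bounds for the linear SDE \eqref{dynamic} to secure both $J(u)<\infty$ (so that $M_{1,t}$, $M_{2,t}$ in \eqref{M1t}--\eqref{M2t} are well defined) and uniform integrability of the difference quotients. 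Granting this, dominated convergence gives
\[
\lim_{\epsilon\to0}\frac{J(u+\epsilon\omega)-J(u)}{\epsilon}=\delta\,\mathbb{E}\big[e^{\delta\Lambda_T(u)}\,D\Lambda_T(u)[\omega]\big].
\]

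It then remains to convert the state-sensitivity contributions into explicit pairings with $\omega$. Using $y_t=e^{At}\int_0^t e^{-As}B\omega_s\,ds$ and swapping the order of integration (Fubini on the simplex $0\le\tau\le s\le T$, equivalently a single integration by parts with $y_0=0$), every term carrying $y$ takes the form $\int_0^T \omega_t^\intercal B^\intercal e^{-A^\intercal t}(\cdots)\,dt$: the running $y$-integral contributes $\int_t^T e^{A^\intercal s}(Qx_s+Su_s-\eta)\,ds$ and the terminal term contributes $e^{A^\intercal T}\widehat{Q}x_T$. Finally I would apply the tower property conditioned on $\mathcal{F}_t$: since $\omega_t$, $u_t$, $x_t$ are $\mathcal{F}_t$-measurable they pull out of $\mathbb{E}[\,\cdot\mid\mathcal{F}_t]$, the local terms collapse the $e^{\delta\Lambda_T}$ factor into $M_{1,t}$, and the remaining future quantities assemble into $M_{2,t}$; splitting $\int_0^T=\int_0^t+\int_t^T$ and reabsorbing the $\mathcal{F}_t$-measurable past part produces the $M_{2,t}$ contribution together with the $B^\intercal\int_0^t e^{A^\intercal(s-t)}(Qx_s+Su_s-\eta)\,ds$ correction attached to $M_{1,t}$, as in \eqref{gtder}. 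The index bookkeeping in this last regrouping is routine; the only substantive obstacle is the domination estimate justifying differentiation under the expectation.
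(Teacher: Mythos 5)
Your proposal follows essentially the same route as the paper's proof: the exact affine dependence $x^\epsilon_t = x_t + \epsilon\int_0^t e^{A(t-s)}B\omega_s\,ds$, the expansion of $\Lambda_T(u+\epsilon\omega)-\Lambda_T(u)$ as a quadratic polynomial in $\epsilon$ whose linear coefficient is the directional derivative, the interchange of limit and expectation, and then Fubini plus the tower property to assemble $M_{1,t}$ and $M_{2,t}$. The only difference is presentational: where you invoke dominated convergence and flag the domination estimate as the substantive obstacle (then grant it), the paper assumes the interchangeability outright via L'H\^opital's rule inside the expectation, deferring the justification to \Cref{comparison} — so both arguments leave exactly the same analytic gap open.
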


\begin{proof}
   First, the strong solution $x_{t}$ to the SDE \eqref{dynamic} under the control action $u_{t}$ is given by
\begin{equation}  \label{xt}
x_{t}=e^{At}x_{0}+\int_{0}^{t}e^{A(t-s)}(Bu_{s}+b(s))ds+\int_{0}^{t}e^{A(t-s)}\sigma (s)dw_{s}.    
\end{equation}
Subsequently, the solution $x^{\epsilon }_{t}$ under the perturbed control action $u_t^\epsilon:=u_{t}+\epsilon\omega_{t}$ is given by 
\begin{equation}  
x^{\epsilon }_{t}=e^{At}x_{0}+\int_{0}^{t}e^{A(t-s)}(Bu_{s}+b(s))ds+\int_{0}^{t}e^{A(t-s)}\sigma (s)dw_{s}+\epsilon \int_{0}^{t}e^{A(t-s)}B\omega_{s}ds.\label{x_eps}
\end{equation}
From \eqref{xt}-\eqref{x_eps}, we have 
 \begin{equation}
x^{\epsilon }_{t}=x_{t}+\epsilon \int_{0}^{t}e^{A(t-s)}B\omega_{s}ds.
\end{equation} 
By a direct computation, we obtain
\begin{align} \label{termi}
\left< \wh{Q}x^{\epsilon }_{T},x^{\epsilon }_{T}\right>\!-\! \left< \wh{Q}x_{T},x_{T}\right>=2\epsilon\!\left<\! \wh{Q}x_{T},\int_{0}^{T}\!\!\!\!e^{A(T-s)}B\omega_sds\!\right>\!+\!\epsilon^{2}\!\left<\! \wh{Q}\int_{0}^{T}\!\!\!\!e^{A(T-s)}B\omega_sds,\int_{0}^{T}\!\!\!\!e^{A(T-s)}B\omega_sds\!\right> \notag \\=2\epsilon\int_{0}^{T}\left< B^{\intercal}e^{A^{\intercal}(T-s)}\wh{Q}x_{T},\omega_s\right>ds+ \epsilon^{2}\left< \wh{Q}\int_{0}^{T}\!e^{A(T-s)}B\omega_sds,\int_{0}^{T}e^{A(T-s)}B\omega_sds\right>.
\end{align}
Finally, from \eqref{costG} and \eqref{xt}-\eqref{termi}, we can compute the difference 
\begin{align}\label{LambdaTdiff}
&\Lambda_{T}(u+\epsilon \omega )-\Lambda_{T}(u)
 =\epsilon \Bigg[\int_{0}^{T}\Bigg\{\left ( \int_{0}^{s}e^{A(s-t)}B\omega_tdt \right ) ^{\intercal}\left(Qx_{s}+Su_{s}-\eta\right)ds +\Big(\left< R\omega_{s},u_{s}\right> \\ &\hspace{0.3cm}+\left<S\omega_{s},x_{s} \right> -\left< \zeta ,\omega_{s}\right>+\left< B^{\intercal}e^{A^{\intercal}(T-s)}\wh{Q}x_{T},\omega_s\right> \Big) ds\Bigg\}\Bigg] \hspace{0cm} + \epsilon^{2}\Bigg[\int_{0}^{T}\Bigg\{\left ( \int_{0}^{s}e^{A(s-t)}B\omega_tdt \right ) ^{\intercal}\notag \allowdisplaybreaks\\ &\hspace{0.3cm}\times \Big(\notag\allowdisplaybreaks Q\int_{0}^{s}e^{A(s-t)}B\omega_tdt+S\omega_{s}\Big)+\left< R\omega_{s},\omega_{s}\right>\Bigg\}ds+\left< \wh{Q}\int_{0}^{T}e^{A(T-s)}B\omega_sds,\int_{0}^{T}e^{A(T-s)}B\omega_sds\right>\Bigg], \notag
\end{align}
which equivalently may be expressed as 
\begin{equation}
    \Lambda_{T}(u+\epsilon \omega )-\Lambda_{T}(u)
 = \epsilon \iota _1 +\epsilon^{2} \iota _2, \label{equiv-rep}
\end{equation}
where the random variables $\iota _1, \iota _2$ do not depend on $\epsilon$ and are $\mathbb{P}$-almost surely finite. Therefore, we have 
\begin{align} 
\left<\mathcal{D}J(u),\omega \right> =&\lim_{\epsilon  \to 0}\frac{J\left ( u+\epsilon \omega  \right )-J\left ( u \right )}{\epsilon }=\lim_{\epsilon  \to 0}\mathbb{E}\left [ e^{\delta \Lambda_{T}(u)}\left (\frac{e^{\delta \Lambda_{T}(u+\epsilon \omega )-\delta \Lambda_{T}(u)}-1}{\epsilon} \right) \right ]. \label{deri0}
\end{align}
By assuming the interchangeability of the limit and the expectation\footnote{See \Cref{comparison}.}, and applying L'Hôpital's rule as in 
\begin{equation}\label{deriv-0}
\lim_{\epsilon\rightarrow 0}\frac{e^{\delta \Lambda_{T}(u+\epsilon \omega )-\delta \Lambda_{T}(u)}-1}{\epsilon}=\lim_{\epsilon\rightarrow 0}\frac{e^{\delta \epsilon \iota _1 +\delta \epsilon^{2} \iota _2}-1}{\epsilon}=\delta  \iota_1,
\end{equation}
we obtain 
\begin{align} \label{deri1}
\left<\mathcal{D}J(u),\omega \right> =&\delta \mathbb{E}[e^{\delta \Lambda_{T}(u)}\iota_1]=\delta \mathbb{E}e^{\delta \Lambda_{T}(u)}\int_{0}^{T} \Bigg[\bigg\{\int_{0}^{s} \omega^{\intercal}_tB^{\intercal}e^{A^{\intercal}(s-t)}(Qx_{s}+Su_{s}-\eta )dt \\ &\hspace{2.4cm}+\omega ^{\intercal }_s\left (B^{\intercal}e^{A^{\intercal}(T-s)}\wh{Q}x_{T}+Ru_{s}+S^{\intercal }x_{s}-\zeta    \right )\bigg\}ds \Bigg]. \notag
\end{align}
After changing the order of the double integral using Fubini's theorem, \eqref{deri1} is equivalent to 
\begin{align} 
  \label{gateau1}
 \left<\mathcal{D}J(u),\omega \right> =&   \delta\mathbb{E}\Bigg[\int_{0}^{T}\omega ^{\intercal}_te ^{\delta \Lambda_{T}(u)}\Big[B^{\intercal}e^{A^{\intercal}(T-t)}\widehat{Q}x_{T}+Ru_{t}+S^{\intercal} x_{t}-\zeta   \allowdisplaybreaks\\ &+B^{\intercal}\int_{t}^{T}e^{A^{\intercal}(s-t)}(Qx_{s}+Su_{s}-\eta )ds \Big]dt\Bigg]  \notag \allowdisplaybreaks\\ =& \delta \mathbb{E}\Bigg[\int_{0}^{T}\omega ^{\intercal}_t\Big[B^{\intercal}e ^{\delta \Lambda_{T}(u)}(e^{A^{\intercal}(T-t)}\widehat{Q}x_{T}+\int_{0}^{T}e^{A^{\intercal}(s-t)}(Qx_{s}+Su_{s}-\eta )ds)\notag \allowdisplaybreaks\\&+e ^{ \Lambda_{T}(u)}\left(Ru_{t}+S^{\intercal} x_{t}-\zeta\right)-B^{\intercal}e ^{ \Lambda_{T}(u)}\int_{0}^{t}e^{A^{\intercal}(s-t)}(Qx_{s}+Su_{s}-\eta )ds\Big]dt\Bigg]. \notag
\end{align}
Finally, using the smoothing property of conditional expectations, \eqref{gateau1} can be written as in \eqref{gtder}, 
where $M_{1,t}(u)$ and $M_{2,t}(u)$ are defined by \eqref{M1t} and \eqref{M2t}.
\end{proof}
Since $J(u)$ is strictly convex, $u$ is the unique minimizer of the risk-sensitive cost functional if $\left<\mathcal{D}J(u),\omega \right>=0$ for all $\omega \in \mc{U}$ (see \citet{ciarlet2013linear}).
We observe that \eqref{gateau1} takes the form of an inner product. Note that the control action $u^{\circ}_t$ given by 
\begin{equation} \label{optc}
u^{\circ}_t =-R^{-1}\Bigg[S^{\intercal}x_{t}-\zeta+B^{\intercal}\left (e^{-A^{\intercal}t}\frac{M_{2,t}(u^{\circ})}{M_{1,t}(u^{\circ})}-\int_{0}^{t}e^{A^{\intercal}(s-t)}(Qx_{s}+Su^{\circ}_s-\eta)ds  \right )\Bigg]\ \end{equation}
makes $\left<\mathcal{D}J(u^{\circ}),\omega \right>=0$, 
$\mathbb{P}$-a.s., where the martingale $M_{1,t}(u^{\circ})$ characterized by \eqref{M1t} is almost surely positive. Therefore, $J(u)$ is Gâteaux differentiable at $u^{\circ}$ and $\left<\mathcal{D}J(u^{\circ}),\omega \right>=0$ 
for all $\omega \in \mc{U}$, making $u^{\circ}$ the unique minimizer of $J(u)$.
 
However, the current form of $u^{\circ}$ is not practical for implementation purposes. Therefore, our objective is to obtain an explicit representation for $u^{\circ}$. Nonetheless, due to the presence of the term $\frac{M_{2,t}(u^{\circ})}{M_{1,t}(u^{\circ})}$, obtaining a state feedback form directly from \eqref{optc} is challenging. To address this issue, we employ a change of probability measure by applying the Girsanov theorem under certain conditions, as stated in the following theorem.
 \begin{lemma}\label{thm:changeM} Consider the control action
\begin{equation} \label{feedbackcontrol}
u^{*}(t) =-R^{-1}\left [S^{\intercal}x_{t}-\zeta + B^{\intercal}\left (\Pi (t)x_{t}+s(t)  \right ) \right ]
\end{equation}
where the deterministic coefficients $\Pi(t)$ and $s(t)$ satisfy the ODEs 
\begin{align}   
&\dot{\Pi}(t)+\Pi(t)A+A^{\intercal}\Pi(t)-(\Pi(t)B+S)R^{-1}(B^{\intercal}\Pi(t)+S^{\intercal})\nonumber\\&\hspace{7cm}+Q+\delta \Pi(t)\sigma (t)\sigma^{\intercal} (t)\Pi(t)=0,\quad \Pi (T)=\widehat{Q}, \label{pi}\\
 &\dot{s}(t) +\left [ A^{\intercal}-\Pi(t)BR^{-1}B^{\intercal}-SR^{-1}B^{\intercal}+\delta\Pi(t)\sigma (t)\sigma^{\intercal} (t)  \right ]s(t)\nonumber\\&\hspace{5cm}+\Pi(t)(b(t)+BR^{-1}\zeta )+SR^{-1}\zeta -\eta =0,\quad s(T)=0. \label{s}
\end{align}
Moreover, let $C^{*}_T$ be defined by  
\begin{align}
C^{*}_T=&\frac{\delta }{2}\int_{0}^{T}\left(2\left<s(t),b(t)\right> -\left<R^{-1}(B^{\intercal}s(t)-\zeta ),B^{\intercal}s(t)-\zeta\right> \notag+\tr(\Pi (t)\sigma (t)\sigma^{\intercal} (t))\right)dt\\&+\frac{\delta^{2} }{2} \int_{0}^{T}\left\|\sigma^{\intercal} (t)s(t) \right\|^{2}dt +\frac{\delta }{2}\left<\Pi (0)x_{0},x_{0} \right> +\delta \left<s(0),x_{0}\right>.  \label{C_star} 
\end{align}
Then, the random variable $\exp(\delta \Lambda_{T}(u^{*})-C^{*}_T)$, where $\Lambda_{T}(u^{*})$ is given by \eqref{costG} under the control \eqref{feedbackcontrol}, is a Radon-Nikodym derivative 
\begin{align} 
\frac{d\widehat{\mathbb{P}}}{d\mathbb{P}}=\exp\left(\delta \Lambda_{T}(u^{*})-{C}^{*}(T)\right)
\end{align}
defining a probability measure $\widehat{\mathbb{P}}$ equivalent to $\mathbb{P}$. Further, the Radon-Nikodym derivative can be represented by 
\begin{gather}
    \frac{d\widehat{\mathbb{P}}}{d\mathbb{P}}:=\exp\left(-\frac{\delta ^{2}}{2}\int_{0}^{T}\left\|\gamma_t \right\|^{2}dt+\delta \int_{0}^{T} \gamma^\intercal_tdw_{t}\right)\label{rnd}\\
\gamma_t=\sigma ^{\intercal}(\Pi (t)x_{t}+s(t)). \label{gamma} 
\end{gather}
\end{lemma}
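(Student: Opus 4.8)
The plan is to prove the statement in two stages. First I would establish the pathwise identity
\[
\delta\Lambda_T(u^*)-C^*_T=-\frac{\delta^2}{2}\int_0^T\|\gamma_t\|^2\,dt+\delta\int_0^T\gamma_t^\intercal\,dw_t,
\]
which is precisely the representation \eqref{rnd}--\eqref{gamma}. Then I would argue that the exponential of the right-hand side, being a Dol\'eans--Dade stochastic exponential, is a true $\mathbb{P}$-martingale with unit expectation, so that $d\widehat{\mathbb{P}}/d\mathbb{P}$ is an almost surely positive density and $\widehat{\mathbb{P}}$ is a probability measure equivalent to $\mathbb{P}$.

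For the first stage, I would introduce the scalar process $V_t:=\tfrac{\delta}{2}\langle\Pi(t)x_t,x_t\rangle+\delta\langle s(t),x_t\rangle$ and apply It\^o's formula to it along the closed-loop dynamics obtained by substituting \eqref{feedbackcontrol} into \eqref{dynamic}, whose drift is $\bar A(t)x_t+\bar b(t)$ with $\bar A=A-BR^{-1}(S^\intercal+B^\intercal\Pi)$ and $\bar b=b+BR^{-1}\zeta-BR^{-1}B^\intercal s$. The diffusion term of $V_t$ is $\delta(\Pi x_t+s)^\intercal\sigma\,dw_t=\delta\gamma_t^\intercal\,dw_t$, which reproduces the stochastic integral in \eqref{rnd}. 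Integrating over $[0,T]$ and using the terminal data $\Pi(T)=\widehat{Q}$ and $s(T)=0$ gives $V_T=\tfrac{\delta}{2}\langle\widehat{Q}x_T,x_T\rangle$, matching the terminal cost in \eqref{costG}, while $V_0=\tfrac{\delta}{2}\langle\Pi(0)x_0,x_0\rangle+\delta\langle s(0),x_0\rangle$ matches the initial contributions in \eqref{C_star}.

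The core of the computation is to add $\tfrac{\delta}{2}$ times the running-cost integrand of \eqref{costG}, evaluated at $u^*$, to the drift of $V_t$, and to sort the outcome by its degree in $x_t$. Completing the square, the quadratic-in-$x_t$ coefficient collapses, by virtue of the Riccati equation \eqref{pi}, to $-\tfrac{\delta^2}{2}x_t^\intercal\Pi\sigma\sigma^\intercal\Pi x_t$; the linear-in-$x_t$ coefficient collapses, by the linear ODE \eqref{s} for $s$, to $-\delta^2 x_t^\intercal\Pi\sigma\sigma^\intercal s$; and the purely deterministic remainder, including the It\^o trace term $\tfrac{\delta}{2}\tr(\Pi\sigma\sigma^\intercal)$, integrates to the first integral in \eqref{C_star}. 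Since $\|\gamma_t\|^2=x_t^\intercal\Pi\sigma\sigma^\intercal\Pi x_t+2x_t^\intercal\Pi\sigma\sigma^\intercal s+\|\sigma^\intercal s\|^2$, the quadratic and linear pieces reproduce the corresponding parts of $-\tfrac{\delta^2}{2}\|\gamma_t\|^2$, while the remaining constant part $-\tfrac{\delta^2}{2}\|\sigma^\intercal s\|^2$ is cancelled by the second integral in \eqref{C_star}, delivering the identity above. The risk-sensitivity terms $\delta\Pi\sigma\sigma^\intercal\Pi$ in \eqref{pi} and $\delta\Pi\sigma\sigma^\intercal$ in \eqref{s} exist precisely to supply these residual quadratic and linear contributions; in the risk-neutral limit $\delta\to0$ they vanish and $V_t$ completes the square exactly.

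For the second stage, with the density identified as the stochastic exponential $\mathcal{E}\!\left(\delta\int_0^{\cdot}\gamma_s^\intercal\,dw_s\right)_T$, strict positivity is immediate, and it remains only to show $\mathbb{E}[d\widehat{\mathbb{P}}/d\mathbb{P}]=1$, i.e.\ that this local martingale is a genuine martingale. Because $\Pi$ and $s$ are bounded and continuous on the compact interval $\mc{T}$ (which presupposes that the risk-sensitive Riccati equation \eqref{pi} does not blow up on $\mc{T}$ in the regime considered), $\gamma_t=\sigma^\intercal(\Pi x_t+s)$ is an affine, hence at most linearly growing, functional of the $\mathbb{P}$-Gaussian process $x_t$. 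I would establish the martingale property by a stepwise Novikov argument---partitioning $\mc{T}$ into subintervals short enough that $\mathbb{E}[\exp(\tfrac{\delta^2}{2}\int\|\gamma_t\|^2\,dt)]<\infty$ on each and chaining the conditional expectations---or, equivalently, by invoking the Bene\v{s} linear-growth condition; then unit expectation makes $\widehat{\mathbb{P}}$ a probability measure, and almost sure positivity of the density gives $\widehat{\mathbb{P}}\sim\mathbb{P}$. I expect this martingale property to be the main obstacle, since $\gamma_t$ is unbounded and a single global Novikov bound over all of $\mc{T}$ need not hold; the resolution leans on the boundedness of $\Pi$ and $s$, itself contingent on the finite-horizon solvability of \eqref{pi}. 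By contrast, the completing-the-square algebra of the first stage, though lengthy, is routine once the It\^o expansion of $V_t$ is organized by powers of $x_t$ and the two ODEs are substituted.
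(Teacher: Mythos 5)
Your proposal is correct and follows essentially the same route as the paper's proof: the paper applies It\^o's formula to the identical quadratic functional $\tfrac{\delta}{2}\left<\Pi(t)x_t,x_t\right>+\delta\left<s(t),x_t\right>$, adds $\delta\Lambda_T(u)$, and uses the ODEs \eqref{pi}--\eqref{s} together with the feedback form \eqref{feedbackcontrol} to annihilate the state-dependent residual (your ``sorting by powers of $x_t$''), arriving at the same pathwise identity with $C^*_T$ and $\gamma_t$. For the martingale property, the paper invokes exactly the Bene\v{s} linear-growth condition you name as an alternative to the stepwise Novikov argument, justified by the closed-loop state being a linear (pathwise Riemann--Stieltjes) functional of the Brownian path, so both stages of your plan coincide with the paper's argument.
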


\begin{proof}
We first define the following quadratic functional of the state process
\begin{equation}
\frac{\delta }{2}\left< \Pi (t)x_{t},x_{t}\right> +\delta \left< s(t),x_{t}\right>\label{quad_func}
\end{equation}
with the deterministic coefficients $\Pi(t)$ and $s(t)$, motivated by \citet{duncan2013linear}. We then apply Itô's lemma to \eqref{quad_func} and integrate both sides from $0$ to $T$ to get 
\begin{align}\label{Ito}
&\frac{\delta }{2}\left< \Pi (T)x_{T},x_{T}\right> +\delta \left<s(T),x_{T}\right> -\frac{\delta }{2}\left< \Pi (0)x_0,x_0\right> -\delta \left<s(0),x_0 \right>\notag\\  &=\frac{\delta }{2}\int_{0}^{T}\Big(\left< \dot{\Pi}(t) x_t,x_t\right> +\left<(\Pi(t) A+ A^{\intercal}\Pi(t) )x_t,x_t\right> +2\left<B^{\intercal}\Pi(t) x_t,u_t \right> +2\left<\Pi(t) x_t,b(t)\right> +2\left<\dot{s}(t),x_t)\right>\notag\\  &+2\left< s(t),Ax_t+Bu_t+b(t)\right> +\tr(\Pi(t) \sigma(t) \sigma(t)^{\intercal} )\Big)dt+\frac{\delta }{2}\int_{0}^{T}2\sigma(t)^{\intercal}(\Pi(t) x_t+s(t)) dw_{t}.  
\end{align} 
Next, we add $\delta \Lambda_{T}(u)$ to both sides of \eqref{Ito}, yielding
\begin{align} \label{LambdaTmanip}
&\delta \Lambda_{T}(u) -\frac{\delta }{2}\left< \Pi (0)x_0,x_0\right> -\delta \left<s(0),x_0 \right>\notag\\&=\frac{\delta }{2}\int_{0}^{T}\Big(\left<QX_{t},X_{t} \right> +2\left<Su_{t},x_{t} \right> +\left<Ru_t, u_t \right>\notag\\&-2\left<\eta ,x_{t} \right>-2\left<\zeta ,u_{t}\right>\Big)dt+\frac{\delta }{2}\left< \widehat{Q}X_{T},X_{T}\right>
 -\frac{\delta }{2}\left< \Pi (T)x_{T},x_{T}\right>\notag\\& -\delta \left<s(T),x_{T} \right>+\frac{\delta }{2}\int_{0}^{T}\Big(\left< \dot{\Pi }(t)x_{t},x_{t}\right> +\left<(\Pi (t)A+ A^{\intercal}\Pi (t))x_{t},x_{t}\right> \notag\\&+2\left<B^{\intercal}\Pi (t)x_{t},u_{t} \right> +2\left<\Pi (t)x_{t},b(t) \right> +2\left<\dot{s}(t),x_{t}\right>\notag\\&+2\left< s(t),Ax_{t}+Bu_{t}+b(t)\right> +\tr(\Pi (t)\sigma(t) \sigma(t)^{\intercal} )\Big)dt\notag\\&+\frac{\delta }{2}\int_{0}^{T}2\sigma(t)^{\intercal}(\Pi(t)x_{t}+s(t)) dw_{t}.
\end{align}
Subsequently, we substitute \eqref{feedbackcontrol} in \eqref{LambdaTmanip} and reorganize the terms to represent $\delta\Lambda_{T}(u)$ as
\begin{align}\label{rep_initial}
\delta \Lambda_{T}(u)= -\frac{\delta ^{2}}{2} \int_{0}^{T}\left\|\gamma_t \right\|^{2}dt+ \delta \int_{0}^{T} \gamma^\intercal_t dw_{t} + {C}^{*}_T+\frac{1}{2}\Phi_T
\end{align}
where $\gamma_t$ and $C^{*}_T$ are, respectively, given by \eqref{gamma} and \eqref{C_star}, and 
$\Phi_T$ is a random variable defined by 
\begin{align}
\Phi_T&=\delta \int_{0}^{T}\Big<\Big(\dot{\Pi}(t)+Q+A^{\intercal}\Pi (t)+\Pi (t)A-(\Pi(t)B+S)R^{-1}(B^{\intercal}\Pi(t)+S^{\intercal})\notag\\&+\delta\Pi(t) \sigma (t)\sigma ^{\intercal}(t)\Pi(t)\Big)x_{t},x_{t}\Big>+\Big<\dot{s}(t)+( A^{\intercal}-\Pi(t)BR^{-1}B^{\intercal}-SR^{-1}B^{\intercal}+\delta\Pi(t)\sigma (t)\sigma^{\intercal}(t))s(t)\notag\\&+\Pi(t)(b(t)+BR^{-1}\zeta ) +SR^{-1}\zeta -\eta,x_{t} \Big>dt+\delta  \left<\widehat{Q}x_{T},x_{T} \right>-\delta \left<\Pi (T)x_{T},x_{T} \right> -2 \delta \left<s(T),x_{T}\right>.
\end{align}
From \eqref{rep_initial}, we observe that $\exp(\delta \Lambda_{T}(u)-C^{*}_T)$ is a Radon-Nikodym derivative if
\begin{equation}
\Phi_T=0, \qquad \mb{P}-a.s.
\end{equation}
The above condition is fulfilled if $\Pi(t)$ and $s(t)$, respectively, satisfy \eqref{pi} and \eqref{s}, and if \eqref{feedbackcontrol} holds.
Note that, after applying \eqref{feedbackcontrol}, the dynamics \eqref{dynamic} under the $\widehat{\mathbb{P}}$ measure become
\begin{equation} \label{dynamichat}
dx_{t}=\left (\widehat{A}(t)x_{t}+\widehat{b}(t)\right )dt+\sigma(t)d\widehat{w}_{t}
\end{equation}
where $\widehat{A}(t):=A-BR^{-1}(S^{\intercal}+B^{\intercal}\Pi(t))+\delta \sigma(t)\sigma^{\intercal}(t)\Pi(t)$ and $\widehat{b}(t):=-BR^{-1}(B^{\intercal}s(t)-\zeta)+b(t)+\delta \sigma(t)\sigma^{\intercal}(t)s(t)$. The strong solution of the linear SDE \eqref{dynamichat} is given by \begin{equation}  \label{statex}
x_{t}=\Upsilon_{\widehat{A}}(t) x_{0}+\Upsilon_{\widehat{A}}(t) \int_{0}^{t}\Upsilon_{\widehat{A}}^{-1}(s) \widehat{b}(t)ds+\Upsilon_{\widehat{A}}(t) \int_{0}^{t}\Upsilon_{\widehat{A}}^{-1}(s)\sigma (s)dw_{s}    
\end{equation}
where $\Upsilon_{\widehat{A}}(t)$ and $\Upsilon_{\widehat{A}}^{-1}(t)$ are, respectively, the solutions of the following matrix-valued ODEs 
\begin{gather} 
 \dot{\Upsilon}_{\widehat{A}}(t)=\widehat{A}(t)\Upsilon_{\widehat{A}}(t)dt, \quad\Upsilon_{\widehat{A}}(0)=I \notag \\
 \dot{\Upsilon}_{\widehat{A}}^{-1}(t)=-\Upsilon_{\widehat{A}}^{-1}(t)\widehat{A}(t)dt, \quad \Upsilon_{\widehat{A}}^{-1}(0)=I,\label{ODE_trans}
\end{gather}
and satisfy $\Upsilon_{\widehat{A}}(t)\Upsilon_{\widehat{A}}^{-1}(t)=I$ (\citet{yong1999stochastic}). Since $\Upsilon_{\widehat{A}}(t)$ and $\Upsilon_{\widehat{A}}^{-1}(t)$ are continuous and of bounded variation, the stochastic integral in \eqref{statex} is almost surely a pathwise Riemann–Stieltjes integral (\citet{chung1990introduction}). Therefore, the linear structure $\gamma_t=\sigma ^{\intercal}(\Pi (t)x_{t}+s(t))$ and \eqref{statex} ensure that 
\begin{equation}
 \exp(\delta \Lambda_{T}(u^{*})-C^{*}_T)=\exp(-\frac{\delta ^{2}}{2}\int_{0}^{T}\left\|\gamma_t \right\|^{2}dt+\delta \int_{0}^{T} \gamma^\intercal_t dw_{t}) \notag  
\end{equation}
 is a Radon-Nikodym derivative by Beneš's condition (see page 200, \citet{karatzas1991brownian}). Hence, $\frac{d\widehat{\mathbb{P}}}{d\mathbb{P}}:=\exp(\delta \Lambda_{T}(u^{*})-C^{*}_T)$ defines the probability measure $\widehat{\mathbb{P}}$ equivalent to $\mathbb{P}$.
\end{proof}
From Lemma \ref{thm:changeM}, under the control action $u^{*}$, the process
\begin{equation}
\widehat{M}_{t}(u^{*}):=\frac{M_{2,t}(u^{*})}{M_{1,t}(u^{*})}    
\end{equation}
where $M_{1,t}(u^{*})$ and $M_{2,t}(u^{*})$ are, respectively, given by \eqref{M1t} and \eqref{M2t}, is a $\widehat{\mathbb{P}}$-martingale represented by
\begin{equation} \label{martrs}
\widehat{M}_{t}(u^{*})=\widehat{\mathbb{E}}\left[ e^{A^{\intercal}T}\widehat{Q}x_{T}+\int_{0}^{T}e^{A^{\intercal}s}(Qx_{s}+Su^{*}_s-\eta)ds \Big| \mathcal{F}_{t} \right],
\end{equation}
which is obtained through the corresponding change of measure. 
\begin{theorem}\label{thm:cntrl_NewM} 
Let $u^{*}_t$, $\Pi(t)$ and $s(t)$ be defined as in Lemma \ref{thm:changeM}, then 
\begin{equation} \label{pt}
\Pi(t)x_{t}+s(t)=e^{-A^{\intercal}t}\widehat{M}_{t}(u^{*})-\int_{0}^{t}e^{A^{\intercal}(s-t)}(Qx_{s}+Su^{*}_s-\eta)ds  , \quad \widehat{\mathbb{P}}-a.s.
\end{equation}
\end{theorem}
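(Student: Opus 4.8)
The plan is to verify \eqref{pt} by exhibiting an explicit $\widehat{\mathbb{P}}$-martingale whose terminal value coincides with $\widehat{M}_T(u^*)$ and whose time-$t$ value rearranges into \eqref{pt}. Concretely, I would introduce the candidate process
\[
\widetilde{M}_t := e^{A^{\intercal}t}\left(\Pi(t)x_t+s(t)\right)+\int_{0}^{t}e^{A^{\intercal}r}\left(Qx_r+Su^{*}_r-\eta\right)dr,
\]
and aim to show $\widetilde{M}_t=\widehat{M}_t(u^{*})$ for all $t\in\mc{T}$, $\widehat{\mathbb{P}}$-a.s. Since the integral term is $\mathcal{F}_t$-measurable, multiplying the identity $\widetilde{M}_t=\widehat{M}_t(u^{*})$ through by $e^{-A^{\intercal}t}$ and isolating $\Pi(t)x_t+s(t)$ reproduces exactly the right-hand side of \eqref{pt}, so the whole claim reduces to identifying $\widetilde{M}_t$ with $\widehat{M}_t(u^{*})$.

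First I would check the terminal condition. Using $\Pi(T)=\widehat{Q}$ and $s(T)=0$ from \eqref{pi}--\eqref{s}, the leading term of $\widetilde{M}_T$ collapses to $e^{A^{\intercal}T}\widehat{Q}x_T$, so that $\widetilde{M}_T=e^{A^{\intercal}T}\widehat{Q}x_T+\int_{0}^{T}e^{A^{\intercal}r}(Qx_r+Su^{*}_r-\eta)dr$, which is precisely the $\mathcal{F}_T$-measurable random variable whose conditional expectation defines $\widehat{M}_t(u^{*})$ in \eqref{martrs}. The decisive step is then to show that $\widetilde{M}_t$ is a $\widehat{\mathbb{P}}$-martingale. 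Applying Itô's formula to $\widetilde{M}_t$ under the $\widehat{\mathbb{P}}$-dynamics \eqref{dynamichat}, writing $Y_t=\Pi(t)x_t+s(t)$ and using $\tfrac{d}{dt}e^{A^{\intercal}t}=A^{\intercal}e^{A^{\intercal}t}$, the drift of $\widetilde{M}_t$ equals $e^{A^{\intercal}t}$ times
\[
A^{\intercal}Y_t+\dot{\Pi}(t)x_t+\Pi(t)\widehat{A}(t)x_t+\Pi(t)\widehat{b}(t)+\dot{s}(t)+Qx_t+Su^{*}_t-\eta,
\]
while the stochastic part is $e^{A^{\intercal}t}\Pi(t)\sigma(t)d\widehat{w}_t$. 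Substituting the feedback control \eqref{feedbackcontrol} together with the definitions of $\widehat{A}(t)$ and $\widehat{b}(t)$, I would separate the terms multiplying $x_t$ from the state-independent ones. Combining the cross terms via the identity $(\Pi(t)B+S)R^{-1}(B^{\intercal}\Pi(t)+S^{\intercal})$, the coefficient of $x_t$ reduces exactly to the left-hand side of the Riccati equation \eqref{pi}, and the state-independent part reduces exactly to the left-hand side of the linear ODE \eqref{s}; both therefore vanish, so the drift is identically zero and $\widetilde{M}_t$ is a $\widehat{\mathbb{P}}$-local martingale. Because $e^{A^{\intercal}t}\Pi(t)\sigma(t)$ is deterministic and continuous on $\mc{T}$ and $x_t$ has finite second moment under $\widehat{\mathbb{P}}$ (the SDE \eqref{dynamichat} being linear with bounded coefficients, cf. \eqref{statex}), the integral is square-integrable and $\widetilde{M}_t$ is a genuine $\widehat{\mathbb{P}}$-martingale.

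Finally I would conclude by uniqueness of a martingale with a prescribed terminal value: since $\widetilde{M}_t$ is a $\widehat{\mathbb{P}}$-martingale with $\widetilde{M}_T=\widehat{M}_T(u^{*})$, the martingale property and the representation \eqref{martrs} yield $\widetilde{M}_t=\widehat{\mathbb{E}}\big[\widetilde{M}_T\mid\mathcal{F}_t\big]=\widehat{M}_t(u^{*})$ for every $t$, $\widehat{\mathbb{P}}$-a.s. Rearranging $\widetilde{M}_t=\widehat{M}_t(u^{*})$ then gives \eqref{pt}. The main obstacle is the drift computation: the algebra is lengthy, and one must organize the substitution of $u^{*}$, $\widehat{A}(t)$, and $\widehat{b}(t)$ so that the state-dependent and state-independent parts of the drift line up exactly with \eqref{pi} and \eqref{s}. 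Reassuringly, these are the very same two identities that forced $\Phi_T=0$ in the proof of \Cref{thm:changeM}, which serves as a useful consistency check on the calculation.
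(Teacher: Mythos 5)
Your proof is correct, and it takes a genuinely different---and leaner---route than the paper's. The paper also starts from the terminal-time identity and the drift cancellation coming from \eqref{pi}--\eqref{s}, but its logic is to apply It\^o's formula to \emph{both} sides of \eqref{pt}: this forces an appeal to the martingale representation theorem, writing $\widehat{M}_t(u^{*})=\widehat{M}_0+\int_0^t Z(s)\,d\widehat{w}_s$ as in \eqref{mart}, and equality of the two sides then requires matching both the drift coefficients (your same Riccati/ODE algebra) \emph{and} the diffusion coefficients, i.e.\ verifying $\Pi(t)\sigma(t)=e^{-A^{\intercal}t}Z(t)$ as in \eqref{vol}. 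Since $Z$ is only defined implicitly, the bulk of the paper's proof is devoted to computing it explicitly: substituting the closed-form solution \eqref{statex} into \eqref{martrs}, interchanging conditional expectations and integrals, and finally showing via a Sylvester-type ODE uniqueness argument \eqref{ztil} that the resulting $\widetilde{Z}(t)$ coincides with $\Pi(t)$. Your construction sidesteps all of this: by working with the single process $\widetilde{M}_t$ built from the left-hand side of \eqref{pt}, the diffusion coefficient is automatically the deterministic, bounded function $e^{A^{\intercal}t}\Pi(t)\sigma(t)$, so once the drift vanishes $\widetilde{M}$ is a genuine $\widehat{\mathbb{P}}$-martingale with no representation theorem needed, and the identification $\widetilde{M}_t=\widehat{\mathbb{E}}\big[\widetilde{M}_T\mid\mathcal{F}_t\big]=\widehat{M}_t(u^{*})$ follows from the tower property and \eqref{martrs}. (Your aside about second moments of $x_t$ is superfluous for exactly this reason: the martingale part has a deterministic integrand.) What the paper's longer route buys in exchange is the explicit identification of the volatility $Z(t)=e^{A^{\intercal}t}\Pi(t)\sigma(t)$ of $\widehat{M}_t(u^{*})$, which your argument yields only implicitly; for proving \eqref{pt} itself that extra information is not needed, and your drift computation is, as you correctly note, the same algebra that forces $\Phi_T=0$ in Lemma \ref{thm:changeM}.
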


\begin{proof}
By inspection, \eqref{pt} holds at the terminal time $T$. More specifically, substituting $\widehat{M}_{T}(u^{*})$ from \eqref{martrs} in the right-hand side of \eqref{pt} results in 
\begin{equation}
\widehat{Q}x_{T} = e^{-A^{\intercal}T}\widehat{M}_{T}(u^*)-\int_{0}^{T}e^{A^{\intercal}(s-T)}(Qx_{s}+Su_{s}^*-\eta)ds, \quad \widehat{\mathbb{P}}-a.s.
\end{equation}  
which is $\widehat{\mathbb{P}}-a.s.$ equal to $\Pi(T)x_{t}+s(T)$ according to \eqref{pi} and \eqref{s}. Hence, in order to establish the validity of \eqref{pt} for all $t \in \mc{T}$, it suffices to demonstrate that the infinitesimal variations of both sides of the equation are $\widehat{\mathbb{P}}$-almost surely equal.

By the martingale representation theorem, $\widehat{M}_{t}(u)$ may be expressed as  
\begin{equation} \label{mart}
\widehat{M}_{t}(u^*)=\widehat{M_{0}} +\int_{0}^{t}Z(s)d\widehat{w}_s,\quad \widehat{\mathbb{P}}-a.s.
\end{equation}
We apply Ito's lemma to both sides of \eqref{pt} and 
substitute \eqref{dynamic}, \eqref{feedbackcontrol}, \eqref{mart} as required. For the resulting drift and diffusion coefficients to be equal on both sides, the equations 
\begin{multline}
\Big(\dot{\Pi}(t)+Q+A^{\intercal}\Pi (t)+\Pi (t)A-(\Pi(t)B+S)R^{-1}(B^{\intercal}\Pi(t)+S^{\intercal})+\delta\Pi(t) \sigma (t)\sigma ^{\intercal}(t)\Pi(t)\Big)x_{t} \\ +\Big(\dot{s}(t)+( A^{\intercal}-\Pi(t)BR^{-1}B^{\intercal}-SR^{-1}B^{\intercal}+\delta\Pi(t)\sigma (t)\sigma^{\intercal}(t)  )s(t)\\+\Pi(t)(b(t)+BR^{-1}\zeta ) +SR^{-1}\zeta -\eta\Big)=0, \quad \widehat{\mathbb{P}}-a.s. \label{drift}
\end{multline}
and 
\begin{equation} 
\Pi (t)\sigma (t)=e^{-A^{\intercal}t}Z(t),  \quad \widehat{\mathbb{P}}-a.s. \label{vol}
\end{equation}
must hold for all $t\in \mc{T}$. It is evident that the requirement \eqref{drift} is met if \eqref{pi} and \eqref{s} hold. It remains to demonstrate that the requirement \eqref{vol} is subsequently met for all $t \in \mc{T}$. 

To determine $Z(t)$, we substitute \eqref{feedbackcontrol} and \eqref{statex} in \eqref{martrs} and equate the stochastic components of the resulting equation with those of \eqref{mart}. This leads to 
\begin{equation}
\int_{0}^{t}Z(s)d\widehat{w}_s=\widehat{\mathbb{E}}\left[ \underbrace{e^{A^{\intercal}T}\widehat{Q}\Upsilon_{\widehat{A}}(T) \int_{0}^{T}\Upsilon_{\widehat{A}}^{-1}(s)\sigma (s)dw_{s}}_{D_1}+\underbrace{\int_{0}^{T}e^{A^{\intercal}s}\widetilde{Q}(s)\Upsilon_{\widehat{A}}(s) \int_{0}^{s}\Upsilon_{\widehat{A}}^{-1}(r)\sigma (r)dw_{r}ds}_{D_2} \Big| \mathcal{F}_{t} \right] \label{eq_Z_initial}  \end{equation}
where $\widetilde{Q}(t)=Q-SR^{-1}(S^{\intercal}+B^{\intercal}\Pi(t))$. Further, let $\mathfrak{L}_{1}(t)=e^{A^{\intercal}t}\widetilde{Q}(t)\Upsilon_{\widehat{A}}(t)$, $\mathfrak{L}_{2}(t)=\Upsilon_{\widehat{A}}^{-1}(t)\sigma (t)$, and rewrite $\widehat{\mathbb{E}}\left[D_2|\mathcal{F}_{t}\right]$ in \eqref{eq_Z_initial} as 
\begin{align}
\widehat{\mathbb{E}}\left[D_2|\mathcal{F}_{t}\right] &= \widehat{\mathbb{E}}\Bigg[\int_{0}^{T}\mathfrak{L}_{1}(s)\int_{0}^{s}\mathfrak{L}_{2}(r)dw_{r}ds  \Big | \mathcal{F}_t\Bigg] \notag \allowdisplaybreaks\\
&=\int_{0}^{T}\mathfrak{L}_{1}(s)\widehat{\mathbb{E}}\left[\int_{0}^{s}\mathfrak{L}_{2}(r) d\widehat{w}_{r} \Big | \mathcal{F}_t \right]ds \notag \\ &=\int_{0}^{t}\mathfrak{L}_{1}(s)\widehat{\mathbb{E}}\left [\int_{0}^{s}\mathfrak{L}_{2}(r) d\widehat{w}_{r} \Big | \mathcal{F}_t \right]ds+\int_{t}^{T}\mathfrak{L}_{1}(s)\widehat{\mathbb{E}}\left [\int_{0}^{s}\mathfrak{L}_{2}(r) d\widehat{w}_{r} \Big | \mathcal{F}_t \right]ds \notag\allowdisplaybreaks\\ &=\int_{0}^{t}\mathfrak{L}_{1}(s)\left [\int_{0}^{s}\mathfrak{L}_{2}(r) d\widehat{w}_{r} \right]ds+\int_{t}^{T}\mathfrak{L}_{1}(s)\left [\int_{0}^{t}\mathfrak{L}_{2}(r) d\widehat{w}_{r} \right] ds 
\notag \allowdisplaybreaks\\ & =\int_{0}^{t}\int_{r}^{t}\mathfrak{L}_{1}(s)ds\mathfrak{L}_{2}(r)d\widehat{w}_{r}+\int_{0}^{t}\int_{t}^{T}\mathfrak{L}_{1}(s)ds\mathfrak{L}_{2}(r)d\widehat{w}_{r} \qquad\quad\text{\scriptsize (change order of integration)}
\notag \allowdisplaybreaks\\ & = \int_{0}^{t}\int_{r}^{T}\mathfrak{L}_{1}(s)ds\mathfrak{L}_{2}(r)d\widehat{w}_{r} 
\end{align}
where the fourth equality holds due to the measurability and martingale property of $\int_{0}^{s}\mathfrak{L}_{2}(r) d\widehat{w}_{r}$. Moreover, due to the martingale property, we can rewrite $\widehat{\mathbb{E}}\left[D_1|\mathcal{F}_{t}\right] $ in \eqref{eq_Z_initial} as  
\begin{equation} \widehat{\mathbb{E}}\left[D_1|\mathcal{F}_{t}\right] = \widehat{\mathbb{E}}\left[ e^{A^{\intercal}T}\widehat{Q}\Upsilon_{\widehat{A}}(T) \int_{0}^{T}\Upsilon_{\widehat{A}}^{-1}(s)\sigma (s)dw_{s} \Big | \mathcal{F}_{t} \right]=
e^{A^{\intercal}T}\widehat{Q}\Upsilon_{\widehat{A}}(T)  \int_{0}^{t}\mathfrak{L}_{2}(s)dw_{s}. \label{D2_exp}
 \end{equation}
Since the martingale representation theorem ensures the uniqueness of the expression $Z(t)$, from \eqref{eq_Z_initial}-\eqref{D2_exp}, we conclude that 
\begin{equation}\label{Z}
 Z(t)= \int_{t}^{T}\mathfrak{L}_{1}(r)dr\mathfrak{L}_{2}(t)+e^{A^{\intercal}T}\widehat{Q}\Upsilon_{\widehat{A}}(T)\mathfrak{L}_{2}(t),\quad \forall t\in \mc{T}.
\end{equation}
We proceed by using the representation
\begin{equation}
 e^{-A^{\intercal}t}Z(t)=\widetilde{Z}(t)\sigma(t),
\end{equation}
where 
\begin{equation}
\widetilde{Z}(t):=e^{-A^{\intercal}t}(\int_{t}^{T}\mathfrak{L}_{1}(s)ds\Upsilon_{\widehat{A}}^{-1}(t)+e^{A^{\intercal}T}\widehat{Q}\Upsilon_{\widehat{A}}(T)\Upsilon_{\widehat{A}}^{-1}(t))=\Pi(t),\quad \widehat{\mathbb{P}}-a.s. \label{Z_tilde}
\end{equation}
for all $t\in \mc{T}$. We will now demonstrate that $\widetilde{Z}(t) = \Pi(t)$, which verifies equation \eqref{vol}.
Since $\widetilde{Z}(T)=\widehat{Q}=\Pi(T)$, it is enough to show that $\widetilde{Z}(t)$ and $\Pi(t)$ satisfy the same ODE. From \eqref{Z_tilde}, we have 
\begin{align} \label{ztil}
 \dot{\widetilde{Z}}(t)&=-A^{\intercal}\widetilde{Z}(t)+e^{-A^{\intercal}t}\left (-\mathfrak{L}_{1}(t)\Upsilon_{\widehat{A}}^{-1}(t)-\int_{t}^{T}\mathfrak{L}_{1}(s)ds\Upsilon_{\widehat{A}}^{-1}(t)\widehat{A}(t)-e^{A^{\intercal}T}\widehat{Q}\Upsilon_{\widehat{A}}(T)\Upsilon_{\widehat{A}}^{-1}(t)\widehat{A}(t)  \right ) \notag \\ & =-A^{\intercal}\widetilde{Z}(t)-\widetilde{Q}(t)-\widetilde{Z}(t)\widehat{A}(t)\notag \\ &=-A^{\intercal}\widetilde{Z}(t)-Q+SR^{-1}(S^{\intercal}+B^{\intercal}\Pi(t))-\widetilde{Z}(t)(A-BR^{-1}(S^{\intercal}+B^{\intercal}\Pi(t))-\delta \sigma(t)\sigma^{\intercal}(t)\Pi(t))\notag \\ &=-A^{\intercal}\widetilde{Z}(t)-\widetilde{Z}(t)A-Q+(\widetilde{Z}(t)B+S)R^{-1}(B^{\intercal}\Pi(t)+S^{\intercal})-\delta \widetilde{Z}(t)\sigma (t)\sigma^{\intercal} (t)\Pi(t),
\end{align}
which is a first-order linear ODE of the Sylvester type with $\Pi(t)$ fixed as the solution of \eqref{pi}. This ODE admits a unique solution that coincides with $\Pi(t)$ (see for example \citet{behr2019solution}). This completes the proof. 
\end{proof}
The following corollary demonstrates that the control action $u^{*}$ given by \eqref{feedbackcontrol}-\eqref{s} is indeed the optimal control action for the LQG risk-sensitive system under $\mathbb{P}$.
\begin{corollary} \label{coro4}Under Assumption 1, $u^*$ given by \eqref{feedbackcontrol}-\eqref{s} is the optimal control action of the LQG risk-sensitive system governed by \eqref{dynamic}-\eqref{costG}.  
\end{corollary}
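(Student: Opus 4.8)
The plan is to identify the explicit feedback control $u^*$ of Lemma \ref{thm:changeM} with the implicit minimizer $u^\circ$ characterized by the first-order condition \eqref{optc}, so that $u^*$ inherits optimality directly from the convex-analytic argument already in place. The whole corollary is a verification: the hard analytic work (strict convexity, the Gâteaux derivative, the change of measure, and the martingale identity \eqref{pt}) has been done, and it remains only to assemble these pieces.

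First I would recall the consequence of strict convexity. By Proposition \ref{strict_convexity} the cost functional $J$ is strictly convex on $\mathcal{U}$, and the discussion following Proposition \ref{thm:Cntrl_initial} shows that any admissible control satisfying the fixed-point relation \eqref{optc}---equivalently, one at which the Gâteaux derivative $\langle \mathcal{D}J(\cdot),\omega\rangle$ vanishes for every $\omega\in\mathcal{U}$---is the unique minimizer, since the martingale $M_{1,t}$ is $\mathbb{P}$-almost surely positive. Hence the task reduces entirely to verifying that $u^*$ solves \eqref{optc}.

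Second I would invoke Theorem \ref{thm:cntrl_NewM}. Under the control $u^*$, relation \eqref{pt} reads
\[
\Pi(t)x_{t}+s(t)=e^{-A^{\intercal}t}\widehat{M}_{t}(u^{*})-\int_{0}^{t}e^{A^{\intercal}(s-t)}(Qx_{s}+Su^{*}_s-\eta)\,ds,\qquad \widehat{\mathbb{P}}\text{-a.s.}
\]
Because $\widehat{M}_{t}(u^{*})=M_{2,t}(u^{*})/M_{1,t}(u^{*})$ and, by Lemma \ref{thm:changeM}, the measures $\widehat{\mathbb{P}}$ and $\mathbb{P}$ are equivalent, this identity holds $\mathbb{P}$-a.s. as well. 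Substituting the right-hand side for $\Pi(t)x_t+s(t)$ in the definition \eqref{feedbackcontrol} of $u^*$ reproduces \eqref{optc} verbatim with $u^*$ in place of $u^\circ$. Thus $u^*$ makes $\langle \mathcal{D}J(u^*),\omega\rangle=0$ for all $\omega\in\mathcal{U}$.

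Finally I would check admissibility and conclude. Since $\Pi(\cdot)$ and $s(\cdot)$ are deterministic and continuous on the compact interval $\mathcal{T}$, and the state $x_t$ given by \eqref{statex} is square-integrable (being the strong solution of a linear SDE with bounded coefficients), the linear feedback $u^*$ belongs to $\mathcal{U}$. Together with strict convexity, this identifies $u^*$ as the unique minimizer of $J$ under $\mathbb{P}$, which is the assertion of the corollary. I do not anticipate a genuine obstacle, as the argument is a verification rather than a construction; the single point meriting explicit mention is the passage from the $\widehat{\mathbb{P}}$-a.s. identity \eqref{pt} to the corresponding $\mathbb{P}$-a.s. statement, which is justified solely by the equivalence of $\widehat{\mathbb{P}}$ and $\mathbb{P}$ established in Lemma \ref{thm:changeM}.
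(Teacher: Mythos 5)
Your proposal is correct and follows essentially the same route as the paper's own proof: substitute the identity \eqref{pt} from Theorem \ref{thm:cntrl_NewM} into the feedback law \eqref{feedbackcontrol}, use Lemma \ref{thm:changeM} to identify $\widehat{M}_{t}(u^{*})$ with the quotient $M_{2,t}(u^{*})/M_{1,t}(u^{*})$, pass from the $\widehat{\mathbb{P}}$-a.s.\ identity to the $\mathbb{P}$-a.s.\ one by equivalence of the two measures, and then invoke strict convexity (Proposition \ref{strict_convexity}) for optimality and uniqueness. Your explicit admissibility check that $u^{*}\in\mathcal{U}$ is a small refinement the paper leaves implicit, but it does not alter the argument.
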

\begin{proof}
It is enough to show that the control action $u^{*}$ given by \eqref{feedbackcontrol}-\eqref{s} satisfies the necessary and sufficient optimality condition given by \eqref{optc}.
We have 
\begin{align}
u^{*}_t& =-R^{-1}\left [S^{\intercal}x_{t}-\zeta + B^{\intercal}\left (\Pi (t)x_{t}+s(t)  \right ) \right ]\notag \\ &=-R^{-1}\left [S^{\intercal}x_{t}-\zeta + B^{\intercal}\left (e^{-A^{\intercal}t}\widehat{M}_{t}(u^{*})-\int_{0}^{t}e^{A^{\intercal}(s-t)}(Qx_{s}+Su^{*}_s-\eta)ds  \right ) \right ]\quad \text{{\small(by Theorem \ref{thm:cntrl_NewM})}}\notag \\ &=-R^{-1}\left [S^{\intercal}x_{t}-\zeta + B^{\intercal}\left (e^{-A^{\intercal}t}\frac{M_{2,t}(u^{*})}{M_{1,t}(u^{*})}  -\int_{0}^{t}e^{A^{\intercal}(s-t)}(Qx_{s}+Su^{*}_s-\eta)ds  \right ) \right ],\,\,\,  \widehat{\mathbb{P}}-a.s., \end{align}
where the last equality is a direct result of Lemma \ref{thm:changeM}. Finally, due to the equivalence of $\widehat{\mathbb{P}}$ and $\mathbb{P}$, we have 
\begin{equation}  
u^{*}_t =-R^{-1}\Bigg[S^{\intercal}x_{t}-\zeta+B^{\intercal}\left (e^{-A^{\intercal}t}\frac{M_{2,t}(u^{*})}{M_{1,t}(u^{*})}-\int_{0}^{t}e^{A^{\intercal}(s-t)}(Qx_{s}+Su^{*}_s-\eta)ds  \right )\Bigg],\,\, \mathbb{P}-a.s.    
\end{equation}
Hence, $u^\ast$ is an optimal control action for the system described by \eqref{dynamic}-\eqref{costG}. The uniqueness of $u^\ast$ as the optimal control action is established due to the strict convexity of the cost functional \eqref{cost}-\eqref{costG}, as demonstrated in \Cref{strict_convexity}.
\end{proof}
\begin{remark}(Risk-Neutral Probability Measure) The probability measure $\widehat{\mathbb{P}}$ may be termed risk-neutral because, under this measure, the necessary and sufficient optimality condition for risk-sensitive LQG optimal control problems described by \eqref{dynamic}-\eqref{costG} is expressed as 
\begin{gather}
\label{optc-new-measure}
u^{*}_t =-R^{-1}\Bigg[S^{\intercal}x_{t}-\zeta+B^{\intercal}\left(e^{-A^{\intercal}t}\widehat{M}_{t}(u^{*})-\int_{0}^{t}e^{A^{\intercal}(s-t)}(Qx_{s}+Su^{*}_s-\eta)ds  \right)\Bigg],
\end{gather}
with $\widehat{M}_{t}$ given by \eqref{martrs}, which is similar to the optimality condition of risk-neutral LQG optimal control problems as detailed in \cite[eq (24)]{firoozi2020convex}. More specifically, under this measure, the optimality condition of the risk-sensitive optimal control problems described by \eqref{dynamic}-\eqref{costG} has the same structure as that of risk-neutral optimal control problems described by
\begin{gather}
dx_{t}=(Ax_{t}+Bu_{t}+b(t))dt+\sigma dw_{t},\\
J(u)=\mathbb{E}\left[ \Lambda_{T}(u) \right], 
\end{gather}
where $\Lambda_{T}(u)$ is given by \eqref{costG}.
\end{remark}
The results obtained in this section can be readily extended to the case where the system matrix is time varying. The following remark provides a summary of this extension, which will be used in \Cref{sec_game}. 
\begin{remark}(Time-varying system matrix $A(t)$)
Consider the system described by the dynamics
\begin{equation}
dx_{t}=(A(t)x_{t}+Bu_{t}+b(t))dt+\sigma (t)dw_{t}, \label{TV_case}
\end{equation}
where $A(t)$ is a continuous function on $\mc{T}$, and the cost functional is given by \eqref{cost}-\eqref{costG}. With some slight modifications, the G\^ateaux derivative of the cost functional is given by
     \begin{align}
    \left<\mathcal{D}J(u),\omega \right> = \delta \mathbb{E}\Bigg[ \int_{0}^{T}\omega ^{\intercal}[B^{\intercal}(\Upsilon ^{-1}_{A}(t))^{\intercal} &M_{2,t}(u)+M_{1,t}(u)(Ru_{t}+S^{\intercal}x_{t}-\zeta \\ &+B^{\intercal}(\Upsilon ^{-1}_{A}(t))^{\intercal} \int_{0}^{t}(\Upsilon _{A}(s))^{\intercal} (Qx_{s}+Su_{s}-\eta)ds )]dt\Bigg],  \notag
    \end{align}
    where $\Upsilon_{A}(t)$ and $\Upsilon_{A}^{-1}(t)$  are defined in the same way as in \eqref{ODE_trans}. The martingale term $M_{1,t}(u)$ is given by \eqref{M1t} and $ M_{2,t}(u)$ by 
    \begin{equation}
      M_{2,t}(u)= \mathbb{E}\left [ e^{\frac{\delta}{2 }\Lambda_{T}(u)}((\Upsilon _{A}(T))^{\intercal} \widehat{Q}x_{t}+\int_{0}^{T}(\Upsilon _{A}(s))^{\intercal} (Qx_{s}+Su_{s}-\eta)ds)\Big | \mathcal{F}_{t} \right ].   
    \end{equation}
    Subsequently, the necessary and sufficient optimality condition for the control action $u^{\circ}_t$ is given by
    \begin{equation}
    u^{\circ}_t =-R^{-1}\Bigg[S^{\intercal}x_{t}-\zeta+B^{\intercal}\left ((\Upsilon ^{-1}_{A}(t))^{\intercal}\frac{M_{2,t}(u^{\circ})}{M_{1,t}(u^{\circ})}-(\Upsilon ^{-1}_{A}(t))^{\intercal} \int_{0}^{t}(\Upsilon _{A}(s))^{\intercal}(Qx_{s}+Su^{\circ}_s-\eta)ds\right ) \Bigg]. 
    \end{equation}
    By applying adapted versions of \cref{thm:changeM}, \cref{thm:cntrl_NewM}, and \cref{coro4}, it can be shown that the optimal control action is given by \eqref{feedbackcontrol}-\eqref{s}, where $A$ is replaced with $A(t)$ in \eqref{pi}-\eqref{s}. 
    
\end{remark}
\begin{remark}[Technical Comparison with Existing Methodologies]\label{comparison}
\!\!The work \cite{lim2005new} develops a risk-sensitive maximum principle requiring that the running and terminal cost functionals in the exponent be uniformly bounded and Lipschitz continuous (see \cite[Assumption B2]{lim2005new}.). This condition is not automatically met for LQG risk-sensitive models with quadratic running and terminal costs in the exponent, unless the state and control spaces are restricted to compact sets. In \cite{duncan2013linear}, a combination of completing the square and a Radon-Nikodym derivative is used to determine an optimal control for an LQG risk-sensitive problem. To verify the optimality of a candidate control, a perturbation process (see \cite[eq. (14)]{duncan2013linear}) is introduced over a specific subset of the time interval. This perturbation of the control action is a bounded process, although the admissible control set includes $L^2$ processes. The work \cite{jacobson1973optimal}  uses dynamic programming to obtain solutions to continuous-time risk-sensitive optimal control problems, where no verification theorem is presented. Similarly, \cite{fleming2006controlled} employs a dynamic programming approach to address such risk-sensitive problems. However, the verification theorem in this work assumes that both the state and control spaces are  bounded (See \cite[Thm. 8.2 \& eq. (3.12)]{fleming2006controlled}). The works \cite{moon2018risk}, \cite{bacsar1998dynamic}, and \cite{moon2019risk}, which respectively study risk-sensitive two-player games and mean-field games, employ similar methodologies and impose conditions similar to those in \cite{lim2005new}. They require uniform boundedness and Lipschitz continuity for running and terminal costs in the exponent. Furthermore, they indicate that restricting state and control spaces to sufficiently large compact subsets of Euclidean spaces is necessary for applying the methodology to LQG counterpart models (see \cite[Sec. VII, footnote 8]{moon2018risk}, \cite[Chap. 6]{bacsar1998dynamic}, \cite[Example 1]{moon2019risk}.). Our variational approach takes advantage of the fact that the Gâteaux derivative may be computed explicitly for LQG risk-sensitive models. Given that the cost functional is strictly convex, this allows us to obtain the necessary and sufficient condition of optimality by setting the Gâteaux derivative to zero. However, the interchangeability of the limit and expectation is required to enable the calculation of the Gâteaux derivative (see \eqref{deriv-0}-\eqref{deri1} in the proof of \Cref{thm:Cntrl_initial}). 
    Although boundedness of state and control processes, as assumed in the literature, provides a sufficient condition for this interchangeability, it is not a necessary condition. 
\end{remark}
The variational analysis developed above will be employed in the next section to obtain the best-response strategies of major and minor agents in MFG systems.
\section{Major-Minor LQG Risk-Sensitive Mean-Field Game Systems}
\label{sec_game}
 Risk-neutral MFGs including a major agent and a large number of minor agents were first introduced in \cite{huang2010large} and have since attracted significant research interest \cite{nourian2013epsilon,carmona2016probabilistic,carmona2017alternative,lasry2018mean,cardaliaguet2019master,firoozi2020convex,huang2020linear,firoozi2022lqg}. In this section, we focus on Risk-Sensitive LQG MFGs that incorporate a major agent and many minor agents. We develop a variational analysis to tackle this problem. Our methodology builds on the variational analysis developed for risk-neutral MFGs as detailed in \cite{firoozi2020convex}.
\subsection{Finite-Population Model}\label{sec:finite_pop}
We consider a system that contains one major agent, who has a significant impact on other agents, and $N$ minor agents, who individually have an asymptotically negligible impact on the system. Minor agents form $K$ subpopulations, such that the agents in each subpopulation share the same model parameters. We define  the index set $\mc{I}_k = \lbrace i: \theta_i = \theta^{(k)} \rbrace,\, k \in \mc{K} :=\left\{1,\dots,K \right\}$, where $\theta^{(k)}$ denotes the model parameters of subpopulation $k$ that will be introduced throughout this section. 
Moreover, we denote the empirical distribution of the parameters $(\theta^{(1)},\dots,\theta^{(K)})$ by $\pi^{(N)}=(\pi_1^{(N)}....\pi_K^{(N)})$, where $\pi_k^{(N)}=\frac{\left| \mc{I}_k\right|}{N}$ and $\left| \mc{I}_k\right|$ is the counting measure of $\mc{I}_k$.

The dynamics of the major agent and of a representative minor agent indexed by $i$ in subpopulation $k$ are, respectively, given by
\begin{align} 
&dx^{0}_{t}=(A_{0}x^{0}_{t}+F_{0}x^{(N)}_{t}+B_{0}u^{0}_t+b_{0}(t))dt+\sigma _{0}(t)dw^{0}_{t} \label{dynamicmaj}\\
&dx^{i}_{t}=(A_{k}x^{i}_{t}+F_{k}x^{(N)}_{t}+G_{k}x^{0}_{t}+B_{k}u^{i}_{t}+b_{k}(t))dt+\sigma _{k}(t)dw^{i}_{t} \label{dynamicmin}
\end{align}
where $i \in \mc{N} =\left\{ 1,\dots,N\right\}$, $k\in \mc{K}$, and $t\in \mc{T}$. The state and the control action are denoted, respectively, by $x^{i}_{t}\in \mathbb{R}^{n}$ and $u^{i}_{t} \in \mathbb{R}^{m}$, $i \in \mc{N}_0=\{ 0,1,\dots,N\}$. Moreover, the processes $\{w^{i}\in \mathbb{R}^{r},i\in \mc{N}_0\}$, are ($N+1$) standard $r$-dimensional Wiener processes defined on the filtered probability space $\left ( \Omega,\mathcal{F},\{\mathcal{F}_{t}^{(N)}\}_{t\in \mc{T}},\mathbb{P} \right )$, where $\mathcal{F}_{t}^{(N)}:=\sigma (x^{i}_0, w^{i}_{s}, i \in \mc{N}_0, s\leq t)$. Finally, the volatility processes $\sigma _{0}(t),\sigma _{k}(t)\in \mathbb{R}^{n\times r}$ and the offset processes $b_{0}(t),b_{k}(t)\in \mathbb{R}^{n}$ are deterministic functions of time, while all other parameters $ \left (A_0,F_0,B_0\right )$, $ \left (A_k,F_k,B_k  \right )$ are constants of an appropriate dimension. 

The empirical average state $x^{(N)}_{t}$ of minor agents is defined by
\begin{equation}
 x^{(N)}_{t}:=\frac{1}{N} \sum_{i \in \mc{N}}x^{i}_{t}   
\end{equation}
where the same weight is assigned to each minor agent's state, implying that minor agents have a uniform impact on the system. 
Denoting $u^{-0} \coloneqq  (u^1,\dots,u^N)$ and $u^{-i} \coloneqq  (u^0,\dots,u^{i-1}, u^{i+1},\dots, u^N)$, the major agent's cost functional is given by 
\begin{align} \label{costmaj}
J_{0}^{(N)}(u^{0},u^{-0})=&\mathbb{E} \bigg[\exp\bigg(\frac{\delta_{0}}{2}\left<\widehat{Q}_{0}(x^{0}_{T}-\Phi^{(N)}_{T}),x ^{0}_{T}-\Phi^{(N)}_{T}\right> +\frac{\delta_{0}}{2}\int_{0}^{T}\Big(\left< Q_{0}(x^{0}_{t}-\Phi ^{(N)}_{t}),x^{0}_{t}-\Phi ^{(N)}_{t}\right>\notag \\  &+2\left< S_{0}u^{0}_{t},x^{0}_{t}-\Phi^{(N)}_{t}\right>+\left< R_0u^{0}_{t},u^{0}_{t}\right> \Big)dt\bigg)\bigg],
\end{align}  
where $\Phi ^{(N)}_{t}:=H_{0}x^{(N)}_{t}+ \eta_0$ and all parameters $(\widehat{Q}_{0}$, $Q_{0}$, $S_{0}$, $H_0$, $\widehat{H}_0$, $R_0$, $\eta_0)$ are of an appropriate dimension.
\begin{assumption}\label{assum:majorCost}
$R_{0}>0$, $\widehat{Q}_{0}\geq 0$,  $Q_{0}-S_{0}R^{-1}_{0}S_{0}^{\intercal } \geq 0$, and $\delta_{0} \in (0,\infty)$. 
\end{assumption}
For the representative minor agent $i$ in subpopulation $k$, the cost functional is given by
\begin{align} \label{costmin}
J_{i}^{(N)}(u^{i},u^{-i})=&\mathbb{E}\bigg[ \mathrm{exp}\bigg(\frac{\delta_{k}}{2}\left<\widehat{Q}_{k}(x ^{i}_{T}-\Psi^{(N)}_{T}),x ^{i}_{T}-\Psi^{(N)}_{T}\right> +\frac{\delta_{k}}{2}\int_{0}^{T}\Big(\left< Q_{k}(x^{i}_{t}-\Psi  ^{(N)}_{t}),x^{i}_{t}-\Psi  ^{(N)}_{t}\right>\notag \\ &  +2\left< S_{k}u^{i}_{t},x^{i}_{t}-\Psi^{(N)}_{T}\right> +\left< R_{k}u^{i}_{t},u^{i}_{t}\right> \Big) dt \bigg)\bigg] 
\end{align}
where $\Psi ^{(N)}_{t}:=H_{k}x^{0}_{t}+\widehat{H}_{k}x^{(N)}_{t}+\eta_{k}$ and all parameters $(\widehat{Q}_{k}$, $Q_{k}$, $S_{k}$, $H_k$, $\widehat{H}_k$, $R_k$, $\eta_k)$ are of an appropriate dimension.

\begin{assumption}\label{assum:minorCost}
$R_{k}>0$, $\widehat{Q}_{k}\geq 0$,  $Q_{k}-S_{k}R^{-1}_{k}S_{k}^{\intercal } \geq 0$, and $\delta_{k}\in (0,\infty)$, $\forall k\in \mc{K}$.
\end{assumption}
Under Assumptions \ref{assum:majorCost}\textendash\ref{assum:minorCost}, \eqref{costmaj} and \eqref{costmin} are strictly convex. 

From \eqref{dynamicmaj}-\eqref{costmin}, the dynamics and cost functionals of both the major agent and the representative minor agent-$i$ are influenced by the empirical average state $x^{(N)}_{t}$. Moreover, the representative minor agent's model is also influenced by the major agent's state $x^{0}_{t}$. 

For both the major agent and the representative minor agent-$i$, an admissible set $\mc{U}^g$ of control actions consists of all  $\mathbb{R}^{m}$-valued $\mathcal{F}_{t}^{(N)}$-adapted processes $u^{i}_{t},\, i\in \mc{N}_0$, such that $\mathbb{E}\left [ \int_{0}^{T} \left \| u^{i}_{t} \right \|^{2}dt\right ]< \infty$. 

In general, solving the $N$-player differential game described in this section becomes challenging, even for moderate values of $N$. The interactions between agents lead to a high-dimensional optimization problem, where each agent needs to observe the states of all other interacting agents. To address the dimensionality and the information restriction, we investigate the limiting problem as the number of agents $N$ tends to infinity. In this limiting model, the average behavior of the agents, known as the mean field, can be mathematically characterized, simplifying the problem. Specifically, in the limiting case, the major agent interacts with the mean field, while a representative minor agent interacts with both the major agent and the mean field. In the next sections, we derive a Markovian closed-loop Nash equilibrium for the limiting game model and show that it yields an $\epsilon$-Nash equilibrium for the original finite-player model. 
\subsection{Infinite-Population Model}\label{sec_inf_pop}
In order to derive the limiting model, we begin by imposing the following assumption.
\begin{assumption}
 There exists a vector of probabilities $\pi$ such that $\lim_{N \to \infty}\pi^{(N)}=\pi$.   
\end{assumption} 
\underline{Mean Field}: We first characterize the average state of minor agents in the limiting case. The average state of subpopulation $k$ is defined by
\begin{align} \label{subave}
x^{(N_k)}_{t} = \tfrac{1}{N_k} \sum_{i\in\mc{I}_k} x^{i}_{t}.
\end{align}
Let  $(x^{[N]})^{\intercal}= [(x^{(N_1)})^{\intercal}, (x^{(N_2)})^{\intercal},\dots, (x^{(N_K)})^{\intercal}]$. If it exists, the pointwise in time limit (in quadratic mean) of $x^{(N)}_{t}$ is called the \emph{state mean field} of the system and denoted by $\bar{x}^\intercal= [(\bar{x}^1)^\intercal, ..., (\bar{x}^K)^\intercal]$. Equivalently, in the limiting case, the representation $\bar{x}^k_t=\mb{E}[x^{\cdot k}|\mc{F}^0_t]$ may be used, where $x^{\cdot k}$ denotes the state of a representative agent in subpopulation $k$ (\citet{nourian2013epsilon,carmona2017alternative}). 

In a similar manner, we define the vector $(u^{[N]})^{\intercal}= [(u^{(N_1)})^{\intercal}, (u^{(N_2)})^{\intercal},\dots, (u^{(N_K)})^{\intercal}]$, the pointwise in time limit (in quadratic mean) of which, if it exists, is called the \emph{control mean field} of the system and denoted by $\bar{u}^\intercal= [(\bar{u}^1)^\intercal, ..., (\bar{u}^K)^\intercal]$. 
We can obtain the SDE satisfied by the state mean field $\bar{x}^k$ of subpopulation $k$ by taking the average of the solution $x^i_t$ to \eqref{dynamicmin} for all agents in subpopulation $k$ (i.e., $\forall i\in\mc{I}_k$), and then taking its $L^2$ limit as $N_k\to\infty$. This SDE is given by 
\begin{equation}
 d\bar{x}^{k}_{t} =
  \left[(A_k\,\mathbf{e}_k+F^{\pi}_k)\,\bar{x}_{t} + G_k \,x^{0}_{t} + B_k\, \bar{u}^{k}_{t} + b_k(t)
 \right] dt,
\end{equation}
where $F_k^{\pi}= \pi \otimes F_k \coloneqq \left[\pi_1 F_k, \dots, \pi_K F_k \right]$, and $\mathbf{e}_k = [0_{n \times n}, ..., 0_{n \times n},\Id_n, 0_{n \times n}, ..., 0_{n \times n}]$, where the $n \times n$ identity matrix $\Id_n$ appears in the $k$th block, and the $n \times n$ zero matrix appears in all other blocks. The dynamics of the mean-field vector $(\bar{x}_{t})^\intercal := [(\bar{x}^{1}_{t})^\intercal, \dots, (\bar{x}^{K}_{t})^\intercal]$, referred to as the \emph{mean-field equation}, are then given by
\begin{equation} \label{mfeq}
d\bar{x}_{t} = \left(\br{A}\,\bar{x}_{t} + \br{G}\, x^{0}_{t} + \br{B}\, \bar{u}_{t} + \br{m}(t)\right)dt,
\end{equation}
where
\begin{equation}\label{MFmatrices}
\br{A} = \begin{bmatrix}
A_1\mathbf{e}_1+ F^{\pi}_1\\
\vdots \\
A_K\mathbf{e}_K+F^{\pi}_K
\end{bmatrix},
\quad
\br{G} = \begin{bmatrix}
G_1 \\
\vdots\\
G_K
\end{bmatrix},
\quad
\br{B} = \begin{bmatrix}
B_1 & &0\\
        & \ddots &\\
       0 & & B_K
\end{bmatrix},
\quad
\br{m}(t) = \begin{bmatrix}
b_1(t)\\
\vdots\\
b_K(t)
\end{bmatrix}.
\end{equation}
\underline{Major Agent}: In the limiting case, the dynamics of the major agent are given by
\begin{equation} \label{majordy1}
dx^{0}_{t} = [A_0\, x^{0}_{t} + F_0^{\pi} \,\bar{x}_{t} + B_0u^{0}_{t}+ b_0(t)]dt + \sigma_{0}(t)dw^{0}_{t},
\end{equation}
where $F_0^{\pi}\coloneqq \pi \otimes F_0^{\pi} = \left[\pi_1 F_0, \dots, \pi_K F_0 \right]$ and the empirical state average is replaced by the state mean field. Following \citet{huang2010large}, in order to make the major agent's model Markovian, we form the extended state 
$(X^{0}_{t})^{\intercal}:=\left [(x^{0}_{t})^{\intercal},(\bar{x}_{t})^{\intercal}  \right ]$ satisfying 
\begin{equation}\label{majorExtDynPert0}
dX^{0}_{t} = \left(\widetilde{A}_0 \,X^{0}_{t} + \mb{B}_0\, u^0_t + \widetilde{B}_0\, \bar{u}_t+\widetilde{M}_{0}(t)\right )dt + \Sigma_{0} dW^{0}_{t},
\end{equation}
where
\begin{gather}
 \widetilde{A}_0 = \left[ \begin{array}{cc}
A_0 & F_0^{\pi} \\
\br{G} &  \br{A}
\end{array} \right]\!,
\quad \mb{B}_0=\left[ \begin{array}{c} B_0 \\ 0  \end{array}\right]
\!,
\quad
 \widetilde{B}_0 = \left[ \begin{array}{c} 0 \\ \br{B}  \end{array}\right]\!,\notag \\
\widetilde{M}_0(t) = \left[ \begin{array}{c} b_0(t) \\ \br{m}(t)  \end{array}\right]\!,
\quad
\Sigma_0 = \left[ \begin{array}{cc}
\sigma_0 & 0 \\
0 &  0
\end{array} \right]\!,
\quad W^{0}_{t} = \left[ \begin{array}{c}
w^{0}_{t}\\
0
\end{array} \right]\!.\label{sysMatMajor}
\end{gather}
The cost functional $J^{\infty }_{0}(\cdot)$ of the major agent under this framework is given by
\begin{align} 
J_{0}^{\infty}(u^{0})&=\mathbb{E}\Bigg[ \mathrm{exp}(\frac{\delta_{0}}{2}\left<\mathbb{G} _{0}X^{0}_{t},X^{0}_{t}\right> +\frac{\delta_{0}}{2}\int_{0}^{T}\left<\mathbb{Q} _{0}X_{s}^{0},X_{s}^{0}\right>+2\left<\mathbb{S}_0u^{0}_s,X_{s}^{0}\right>\notag\\&\hspace{5cm}+\left< R_{0}u^{0}_s,u^{0}_s\right>-2\left< X_{s}^{0},\bar{\eta }_{0}\right> -2\left<u^{0}_s,\bar{n}_0 \right>dt) \Bigg]\label{costinmaj} \\
\mathbb{G}_0 &= \left[\Id_{n}, -H_0^{\pi}\right ]^\intercal \hQ_0   \left [\Id_{n}, -H_0^{\pi}\right],\quad  \mathbb{Q}_0 = \left[\Id_{n}, -H_0^{\pi}\right ]^\intercal Q_0 \left[\Id_{n}, -H_0^{\pi}\right],\quad  
\mathbb{S}_0 = \left[\Id_{n}, -H_0^{\pi}\right ]^\intercal  S_0, \notag\\&\hspace{2cm} \bar{\eta}_0 = \left[\Id_{n}, -H_0^{\pi}\right ]^\intercal  Q_0 \eta_0, \quad \bar{n}_0 = S_0^\intercal \eta_0,\quad 
H_0^{\pi} = \left[\pi_1 H_0, \dots, \pi_K H_0\right]. \label{majorpa}
\end{align} 
\underline{Minor Agent}: The limiting dynamics of the representative minor agent $i$ in subpopulation $k$ are given by 
\begin{equation} \label{minordyinf1}
dx^{i}_{t} = [A_k\, x^{i}_{t} + F^{\pi}_k\, \bar{x}_{t} + G_k \,x^{0}_{t}  + B_k \,u^{i}_{t} + b_k(t)]dt + \sigma_k\, dw^i_t,
\end{equation}
where $F_k^{\pi}\coloneqq \pi \otimes F_k^{\pi} = \left[\pi_1 F_k, \dots, \pi_K F_k \right]$. As for the major agent, we form  the representative minor agent's extended state $(X^{i}_{t})^{\intercal}:=\left [(x^{i}_{t})^{\intercal},(x^{0}_{t})^{\intercal},(\bar{x}_{t})^{\intercal}  \right ]$ in order to make the model Markovian. The extended dynamics are given by
\begin{equation} \label{minorExtDynPert}
dX^{i}_t=(\widetilde{A}_{k}X^{i}_t+\mathbb{B}_{k}u^{i}_{t}+\widetilde{\mb{B}}_{0}u^{0}_{t}+\widetilde{B}\bar{u}_t+\widetilde{M}_{k}(t))dt+\Sigma_{k}dW^{i}_{t},     
\end{equation}
where \eqref{majorExtDynPert0} and \eqref{minordyinf1} are used, and
\begin{gather}
\widetilde{A}_k = \left[ \begin{array}{cc} A_k & [G_k \, \, \, F_k^{\pi}]\\ 0 &\widetilde{A}_0 \end{array} \right], \quad
 \mb{B}_k = \left[ \begin{array}{c} B_k \\ 0 \end{array}\right], \quad \widetilde{\mb{B}}_0 = \left[ \begin{array}{c} 0 \\ \mb{B}_0 \end{array}\right], \quad
\widetilde{B} = \left[ \begin{array}{c} 0 \\ \widetilde{B}_0 \end{array}\right]\notag\\
 \widetilde{M}_k(t) = \left[ \begin{array}{c} b_k(t) \\ \widetilde{M}_0(t) \end{array}\right], \quad
\Sigma_k = \left[ \begin{array}{cc} \sigma_k & 0 \\ 0 & \Sigma_0 \end{array} \right], \quad W^{i}_{t} = \left[ \begin{array}{c} w^{i}_{t} \\ W^{0}_{t} \end{array} \right].\label{sysMatMinor}
\end{gather}
The cost functional for minor agent $i$, expressed in terms of its extended state, can be reformulated as 
\begin{multline} \label{costmininf}
J_{i}^{\infty }(u^{i})=\mathbb{E}\Big[ \exp\Big(\frac{\delta_{k}}{2}\left<\mathbb{G} _{k}X^{i}_{t},X^{i}_{t}\right> +\frac{\delta_{k}}{2}\int_{0}^{T}\left<\mathbb{Q} _{k}X_{s}^{i},X_{s}^{i}\right>+2\left<\mathbb{S} _{k}u^{i}_s,X_{s}^{i}\right>\\+\left< R_{k}u^{i}_s,u^{i}_s\right>-2\left< X_{s}^{i},\bar{\eta }_{k}\right> -2\left<u^{i}_s,\bar{n}_k \right>dt\Big) \Big],  
\end{multline}  
where 
\begin{gather}
\mathbb{G}_k = [\Id_{n}, -H_k, -\hH_k^{\pi}]^\intercal \hQ_k [\Id_{n}, -H_k, -\hH_k^{\pi}],
\quad
\mathbb{Q}_k = [\Id_{n}, -H_k, -\hH_k^{\pi}]^\intercal Q_k [\Id_{n},  -H_k, -\hH_k^{\pi}],
\notag\\
\mathbb{S}_k = [\Id_{n}, -H_k, -\hH_k^{\pi}]^\intercal  S_k, \quad \bar{\eta}_k = [\Id_{n}, -H_k, \hH_k^{\pi}]^\intercal  Q_k \eta_k, \quad \bar{n}_k = S_k^\intercal \eta_k,\quad
\hH_k^{\pi} = \left[\pi_1 \hH_k, \dots, \pi_K \hH_k \right]. \label{paraminor}
\end{gather}
 
Finally, for the limiting system, we define (i) the major agent's information set $\mc{F}^0:=(\mc{F}^0_t)_{t\in\mc{T}}$ as the filtration generated by $(w^0_{t})_{t\in\mc{T}}$, and (ii) a generic minor agent $i$'s information set $\mc{F}^i\coloneqq (\mc{F}_{t}^i)_{t\in\mc{T}}$ as the filtration generated by $(w^i_{t}, w^0_t)_{t\in\mc{T}}$.
\subsection{Nash Equilibria}
The limiting system described in Section \ref{sec_inf_pop} is a stochastic differential game involving the major agent, the mean field, and the representative minor agent. Our goal is to find the Markovian closed-loop Nash equilibria for this game. We define the admissible set of Markovian closed-loop strategies according to the following assumption.
\begin{assumption}
(Admissible Strategies)\label{ass:MajorControl}
 (i) For the major agent, the set of admissible control strategies  $\mc{U}^{0}$ is defined to be the collection of Markovian linear closed-loop control laws $u^0 \coloneqq (u^0_t)_{t\in\mc{T}}$ such that $\mb{E}[\int_0^T u_t^{0\intercal}u_t^0\, dt] < \infty$. More specifically, $u^0_t = \ell^0_0(t)+\ell^1_0(t)x^0_t+\ell^2_0(t)\bar{x}_t$ for some deterministic functions $\ell^0_0(t), \ell^1_0(t),$ and $\ell^2_0(t)$. (ii) For each minor agent $i,\, i \in \mc{N}$, the set of admissible strategies $\mc{U}^{i}$ is defined to be the collection of Markovian linear closed-loop control laws $u^i\coloneqq (u^i_t)_{t\in\mc{T}}$ such that  $\mb{E}[\int_0^T u_t^{i\intercal}  u_t^i\, dt] < \infty$. More specifically, $u^i_t = \ell^0(t)+\ell^1(t)x^i_t+\ell^2(t)x^0_t+\ell^{3}(t)\bar{x}_t$ for some deterministic functions $\ell^0(t),\ell^1(t), \ell^2(t)$ and $\ell^{3}(t)$.
\end{assumption}
From \eqref{majorExtDynPert0}--\eqref{costinmaj} and \eqref{minorExtDynPert}--\eqref{costmininf}, the major agent's problem involves $\bar{u}$, whereas the representative minor agent's problem involves $u^0$ and $\bar{u}$. Therefore, solving these individual limiting problems requires a fixed-point condition in terms of $\bar{u}$. Note that, under our assumption about the form of an admissible strategy for a representative minor agent, 
if the state and control mean fields exist at the equilibrium, they satisfy the following relation  
\begin{equation}
\bar{u}_t= \lim_{N\rightarrow \infty}\frac{1}{N}\sum_{i \in \mc{N}} u^i_t =\ell^{2}(t)+\ell^{2}(t)x^0_t+(\ell^{1}(t)+\ell^{3}(t))\bar{x}_t. \label{feedpre}
\end{equation}
Thus, we employ the fixed-point approach outlined as follows:
  \begin{itemize}
  \item[(i)] Assume the form given in \eqref{feedpre} for the mean field $\bar{u}$, and solve the resulting differential game given by \eqref{majorExtDynPert0}--\eqref{costinmaj} and \eqref{minorExtDynPert}--\eqref{costmininf} to obtain the best-response strategies  $u^{0,\ast}$ and $u^{i,\ast}$, respectively, for the major agent and a representative minor agent $i$.
    \item[(ii)] Impose the consistency condition 
    $u^{(N)}_{t} = \tfrac{1}{N} \sum_{i\in\mc{N}} u^{i,\ast}_{t}\rightarrow \bar{u}_t$ as $N \rightarrow \infty$ to determine the coefficients in \eqref{feedpre} and fully characterize the mean field equation \eqref{mfeq} that $\bar{x}$ satisfies.
    
To derive the best-response strategies in (i) we use the variational analysis presented in Section \ref{sec_control}. The following theorem summarizes our results.
 \end{itemize}

\begin{theorem} \label{Nash Equilibrium}
[Nash Equilibrium] Suppose Assumptions \ref{assum:majorCost}--\ref{ass:MajorControl} hold. The set of control laws $\{u^{0,*}, u^{i,*}, i \in \mc{N}\}$, where $u^{0,*}$ and $u^{i,*}$ are respectively given by 
\begin{gather} 
     u^{0,*}_{t} = - R_0^{-1} \big [ \mathbb{S}_0^\intercal X^0_{t} -\bar{n}_0+  \mathbb{B}_0^\intercal \big(\Pi_0(t) X^{0}_{t} + s_0(t) \big) \big] \label{majlaw} \\
  u^{i,*}_t = - R_k^{-1} \left[ \mathbb{S}_k^\intercal \, X^{i}_t -\bar{n}_k+  \mathbb{B}_k^\intercal \left(\Pi_k(t) \,X^{i}_t + s_k(t) \right) \right] \label{minorlaw}  
  \end{gather}
  forms a unique Markovian closed-loop Nash equilibrium for the limiting system \eqref{majorExtDynPert0}-\eqref{majorpa} and \eqref{minorExtDynPert}-\eqref{paraminor} subject to the following consistency equations
\begin{align}
\begin{cases} 
&-\dot{\Pi }_{0}=\Pi_{0}\mathbb{A}_{0}+\mathbb{A}_{0}^{\intercal}\Pi_{0}-(\Pi _{0}\mathbb{B}_{0}+\mathbb{S}_{0})R_{0}^{-1}(\mathbb{B}_{0}^{\intercal}\Pi _{0}+\mathbb{S}_{0}^{\intercal})+\mathbb{Q}_{0}+\delta_{0}\Pi _{0}\Sigma _{0}\Sigma _{0}^{\intercal}\Pi _{0}, \quad
\Pi _{0}(T)=\mathbb{G}_{0} \\
&-\dot{\Pi }_{k}=\Pi_{k}\mathbb{A}_{k}+\mathbb{A}_{k}^{\intercal}\Pi_{k}-(\Pi _{k}\mathbb{B}_{k}+\mathbb{S}_{k})R_{k}^{-1}(\mathbb{B}_{k}^{\intercal}\Pi _{k}+\mathbb{S}_{k}^{\intercal})+\mathbb{Q}_{k}+\delta_{k}\Pi _{k}\Sigma _{k}\Sigma _{k}^{\intercal}\Pi _{k},\quad
\Pi _{k}(T)=\mathbb{G}_{k} \\
&\bar{A}_k  = \left[A_k - B_k R_k^{-1} (\mathbb{S}_{k,11}^\intercal + B_k^\intercal \Pi_{k,11})\right] \mathbf{e}_k
+ F^{\pi}_k - B_k R_k^{-1} ( \mathbb{S}_{k,31}^\intercal+ B_k^\intercal \Pi_{k,13}),\\
&\bar{G}_k  = G_k -B_k R_k^{-1}(\mathbb{S}_{k,21}^\intercal + B_k^\intercal \Pi_{k,12}), \label{Pinash}
\end{cases}
\end{align}
\begin{align} \label{snash}
\begin{cases}
&-\dot{s}_{0}=[(\mathbb{A}_{0}-\mathbb{B}_{0}R_{0}^{-1}\mathbb{S}_{0}^{\intercal})^{\intercal}-\Pi _{0}\mathbb{B}_{0}R_{0}^{-1}\mathbb{B}_{0}^{\intercal}]s_{0}+\Pi _{0}(\mathbb{M}_{0}+\mb{B}_0R_{0}^{-1}\bar{n}_0)+\mathbb{S}_{0}R_{0}^{-1}\bar{n}_0 \\&\qquad\hspace{7cm}-\bar{\eta }_{0}+\delta_{0}\Pi _{0}\Sigma _{0}\Sigma _{0}^{\intercal}s_{0}, \quad\quad s_{0}(T)=0\\
&-\dot{s}_{k}=[(\mathbb{A}_{k}-\mathbb{B}_{k}R_{k}^{-1}\mathbb{S}_{k}^{\intercal})^{\intercal}-\Pi _{k}\mathbb{B}_{k}R_{k}^{-1}\mathbb{B}_{k}^{\intercal}]s_{k}+\Pi _{k}(\mathbb{M}_{k}+\mb{B}_kR_{k}^{-1}\bar{n}_k)+\mathbb{S}_{k}R_{k}^{-1}\bar{n}_k \\&\hspace{6.5cm}\qquad \quad-\bar{\eta }_{k}+\delta_{k}\Pi _{k}\Sigma _{k}\Sigma _{k}^{\intercal}s_{k}, \hspace{1cm}
s_{k}(T)=0 \\ 
& \bar{m}_k = b_k+B_kR_k^{-1}\bar{n}_k-B_k R_k^{-1} {B}_k^\intercal s_{k,11}.
\end{cases}
\end{align}
where, for $\Pi_{k}$ and $\mathbb{S}_k$, we use the representation 
 \begin{gather}
\Pi_k =
\begin{bmatrix}
\Pi_{k,11} & \Pi_{k,12} & \Pi_{k,13} \\
\Pi_{k,21} & \Pi_{k,22} & \Pi_{k,23}\\
\Pi_{k,31} & \Pi_{k,32} & \Pi_{k,33}
\end{bmatrix},\quad 
\mathbb{S}_k = \begin{bmatrix} \mathbb{S}_{k,11} \\ \mathbb{S}_{k,21} \\ \mathbb{S}_{k,31}\end{bmatrix}, \quad s_k = \begin{bmatrix} s_{k,11}\\ s_{k,21} \\ s_{k,31}\end{bmatrix} \label{pk}
\end{gather}
with $\Pi_{k,11}, \Pi_{k,22} \in \mbR^{n \times n}$, $\Pi_{k,33} \in \mbR^{nK \times nK}$, $\mathbb{S}_{k,11}, \mathbb{S}_{k,21} \in \mbR^{n \times m}$, $\mathbb{S}_{k,31} \in \mbR^{nK \times m}$, $s_{k,11}, s_{k,21} \in \mbR^{n}$, $s_{k,31} \in \mbR^{nK}$, and 
\begin{equation}
\bar{A} = \begin{bmatrix} \bar{A}_1\\ \vdots\\\bar{A}_K\end{bmatrix}, \quad 
\bar{G} = \begin{bmatrix} \bar{G}_1\\ \vdots\\ \bar{G}_K\end{bmatrix}, \quad 
\bar{m} = \begin{bmatrix} \bar{m}_1\\ \vdots \\ \bar{m}_K \end{bmatrix},
\end{equation}
\begin{equation}
\mb{A}_0 = \begin{bmatrix}
A_0 & F_0^{\pi}\\
\bar{G} & \bar{A}
\end{bmatrix}, 
 \,\, \mb{A}_k = \begin{bmatrix} A_k & [G_k \, \, \, F_k^{\pi}]\\ 0 &\mb{A}_0- \mathbb{B}_0 R^{-1}_0 (\mathbb{S}_0^\intercal+\mb{B}_0^\intercal \Pi_0) \end{bmatrix}, \,\,
\mb{M}_0 = \begin{bmatrix} b_0 \\ \bar{m} \end{bmatrix},\,\,\mb{M}_k = \begin{bmatrix} b_k \\ \mb{M}_0-\mb{B}_0 R_0^{-1} \mb{B}_0^\intercal s_0 \end{bmatrix}.\label{coeff}
\end{equation}
In addition, the mean field $\bar{x}_t$ satisfies
\begin{equation}\label{MF_uperturbed}
d\bar{x}_{t} = \left(\bar{A}\,\bar{x}_{t} + \bar{G}\, x^{0}_{t} + \bar{m}\right)dt.
\end{equation}

\end{theorem}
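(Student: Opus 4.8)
The plan is to follow the two-stage fixed-point scheme sketched in items (i)--(ii) preceding the statement, treating each agent's limiting problem as an instance of the single-agent LQG risk-sensitive control problem of \Cref{sec_control} and then closing the loop with the consistency requirement. Throughout, I identify the state filtration of \Cref{sec_control} with the noise filtrations $\mc{F}^0$ and $\mc{F}^i$ here, which is legitimate since the extended states are strong solutions of linear SDEs.

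First I would fix the control mean field $\bar u$ in the affine feedback form \eqref{feedpre} and substitute it into the major agent's extended dynamics \eqref{majorExtDynPert0}. Since $\bar u_t$ is affine in $X^0_t$, this absorbs the mean-field coupling into the drift and yields a self-contained linear SDE $dX^0_t=(\mb{A}_0 X^0_t+\mathbb{B}_0 u^0_t+\mb{M}_0(t))dt+\Sigma_0 dW^0_t$ with $\mb{A}_0,\mb{M}_0$ as in \eqref{coeff}. The cost \eqref{costinmaj} is exactly of the form \eqref{cost}--\eqref{costG} under the parameter dictionary $(A,B,S,R,Q,\eta,\zeta,\wh{Q},\sigma\sigma^\intercal,\delta)\mapsto(\mb{A}_0,\mathbb{B}_0,\mathbb{S}_0,R_0,\mathbb{Q}_0,\bar\eta_0,\bar n_0,\mathbb{G}_0,\Sigma_0\Sigma_0^\intercal,\delta_0)$. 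Under \Cref{assum:majorCost} the cost is strictly convex by \Cref{strict_convexity}, and the time-varying-matrix versions of \Cref{thm:changeM}, \Cref{thm:cntrl_NewM}, and \Cref{coro4} (recorded in the remark following \Cref{coro4}) deliver the unique best response \eqref{majlaw} together with the $(\Pi_0,s_0)$ equations in the first lines of \eqref{Pinash}--\eqref{snash}. Repeating the argument for a representative minor agent in subpopulation $k$, now holding both $u^0=u^{0,*}$ and $\bar u$ fixed in \eqref{minorExtDynPert}, reduces its problem to the same canonical form with effective matrix $\mb{A}_k$ and offset $\mb{M}_k$ from \eqref{coeff}; \Cref{assum:minorCost} and the same three results give \eqref{minorlaw} and the $(\Pi_k,s_k)$ equations.

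The second stage is the consistency step. I would average the minor best responses \eqref{minorlaw} over subpopulation $k$ and pass to the $L^2$ limit, using $\bar x^k_t=\mb{E}[x^{\cdot k}\mid\mc{F}^0_t]$, so that the $x^i$ block of $X^i$ is replaced by $\bar x^k$ while the $x^0$ and $\bar x$ blocks persist. Inserting the block decompositions \eqref{pk} of $\Pi_k,\mathbb{S}_k,s_k$ and identifying the coefficients of $\bar x$, $x^0$, and the constant term with the assumed form \eqref{feedpre} produces precisely the relations for $(\bar A_k,\bar G_k,\bar m_k)$ in the last lines of \eqref{Pinash} and \eqref{snash}; assembling them blockwise gives $\bar A,\bar G,\bar m$, and substituting the realized $\bar u$ into \eqref{mfeq} yields the closed equation \eqref{MF_uperturbed}. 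For the Nash-equilibrium verification: at any solution of the coupled system, \eqref{majlaw} is the unique minimizer of $J_0^\infty$ given $\bar u$ and \eqref{minorlaw} the unique minimizer of $J_i^\infty$ given $(u^{0,*},\bar u)$, both by strict convexity (\Cref{strict_convexity}); since $\bar u$ is reproduced by the consistency condition, no unilateral deviation is profitable, so the profile is a Nash equilibrium.

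The main obstacle I anticipate is the well-posedness and uniqueness of the \emph{coupled} system \eqref{Pinash}--\eqref{snash}. The difficulty is its nested structure: the coefficients $\mb{A}_0,\mb{A}_k,\mb{M}_0,\mb{M}_k$ feeding the Riccati and linear equations depend through \eqref{coeff} on $(\bar A,\bar G,\bar m)$, which in turn are determined by the block components of the very solutions $(\Pi_k,s_k)$ one is solving for, so one cannot simply integrate backward in sequence. I would instead view the problem as a fixed point in the mean-field coefficients $(\bar A,\bar G,\bar m)$: first establish existence and boundedness of the risk-sensitive Riccati solutions $\Pi_0,\Pi_k$ — here the indefinite quadratic term $\delta\Pi\Sigma\Sigma^\intercal\Pi$ may force finite-time blow-up, so a smallness or structural condition relating the horizon length to $\delta_0,\delta_k$ is needed — and then set up a contraction or continuity argument on $(\bar A,\bar G,\bar m)$ to obtain a unique fixed point, from which uniqueness of the Nash equilibrium follows by uniqueness of each best response. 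The remaining steps (the matrix algebra reducing the extended dynamics, the Fubini-type manipulation in the averaging, and the coefficient identification) are routine given the machinery of \Cref{sec_control}.
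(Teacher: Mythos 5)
Your proposal is correct and follows essentially the same route as the paper's own proof: fix the affine representation \eqref{feedpre} for $\bar{u}$, reduce the major agent's and a representative minor agent's limiting problems to the single-agent machinery of \Cref{sec_control} (via \Cref{thm:Cntrl_initial}, \Cref{thm:changeM}, \Cref{thm:cntrl_NewM}\creflastconjunction\Cref{coro4}), and then use the consistency condition to identify the feedback coefficients and arrive at \eqref{Pinash}--\eqref{snash} and \eqref{MF_uperturbed}. The solvability obstacle you flag in your final paragraph, while a legitimate mathematical concern, lies outside the theorem's scope as stated: the equilibrium is asserted \emph{subject to} the consistency equations, and the paper likewise leaves the existence and uniqueness of a solution to the coupled Riccati system \eqref{Pinash} open (noting only that, given such a solution, \eqref{snash} is a linear ODE system with a unique solution), addressing solvability solely through a numerical fixed-point scheme in the subsequent subsection.
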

\begin{proof}
Under Assumption \ref{ass:MajorControl}, the mean field of control actions $\bar{u}_t$ may be expressed as 
\begin{equation} \label{feedbackubar_major}
\bar{u}_{t}=  \Xi(t)X^{0}_{t}+\varsigma(t).
 \end{equation}
where the matrix $\Xi(t)$ and the vector $\varsigma(t)$ are deterministic functions of appropriate dimensions.We begin by examining the major agent's system \eqref{majorExtDynPert0}--\eqref{majorpa}. Using the representation \eqref{feedbackubar_major}, the major agent's extended dynamics may be rewritten as
 \begin{equation} \label{dymajorex}
dX^{0}_{t} = \left((\widetilde{A}_0+\widetilde{B}_{0} \Xi) \,X^{0}_{t} + \mb{B}_0\, u^0_t +\widetilde{M}_{0}+\widetilde{B}_{0}\varsigma(t)\right )dt + \Sigma_{0} dW^{0}_{t}.
 \end{equation}
Subsequently, the optimal control problem faced by the major agent reduces to a single-agent optimization problem. We use the methodology presented in  Section \ref{sec_control} to solve this resulting optimal control problem for the major agent's extended problem. According to Theorem \ref{thm:Cntrl_initial}, the major agent's best-response strategy is given by 
\begin{multline}\label{optmajor1}
u^{0,*}_t =-R_{0}^{-1}\Bigg[\mathbb{S} _{0}^{\intercal}X^{0}_{t}-\bar{n}_0+\mb{B}_0^{\intercal}\Big((\Upsilon ^{-1}_{\widetilde{A}_0+\widetilde{B}_{0}\Xi}(t))^{\intercal}\frac{M_{2,t}^0(u^{0,*})}{M_{1,t}^0(u^{0,*})}\\-(\Upsilon ^{-1}_{\widetilde{A}_0+\widetilde{B}_{0}\Xi}(t))^{\intercal}\int_{0}^{t}(\Upsilon _{\widetilde{A}_0+\widetilde{B}_{0}\Xi}(s))^{\intercal}(\mathbb{Q} _{0}X^{0}_{s}+\mathbb{S} _{0}u^{0,*}_s-\bar{\eta }_{0})ds \Big) \Bigg]
\end{multline} 
where $\Upsilon _{\widetilde{A}_0+\widetilde{B}_{0}\Xi}(t)$ and $ \Upsilon ^{-1}_{\widetilde{A}_0+\widetilde{B}_{0}\Xi}(t)$ are defined as in \eqref{ODE_trans}, and the martingale terms $M_{1,t}^0(u^{0,*})$ and $M_{2,t}^0(u^{0,*})$ are defined as in \eqref{M1t} and \eqref{M2t}, respectively. From Lemma \ref{thm:changeM}, Theorem \ref{thm:cntrl_NewM}, and Corollary \ref{coro4}, we can show that \eqref{optmajor1} admits a unique feedback representation
 \begin{equation}\label{optconmajor}
 u^{0,*}_{t} = - R_0^{-1} \big [ \mathbb{S}_0^\intercal X^0_{t} -\bar{n}_0+  \mathbb{B}_0^\intercal \big(\Pi_0(t) X^{0}_{t} + s_0(t) \big) \big]
 \end{equation}
 where $\Pi_0(t)$ and $s_0(t)$ satisfy 
 \begin{multline} \label{eqmajor}
 \Big(\dot{\Pi}_{0}(t)+\mathbb{Q}_{0}+(\widetilde{A}_0+\widetilde{B}_{0}\Xi(t))^{\intercal}\Pi_{0} (t)+\Pi_{0} (t)(\widetilde{A}_0+\widetilde{B}_{0}\Xi(t))-(\Pi_{0}(t)\mb{B}_0+\mathbb{S} _{0})R_{0}^{-1}(\mb{B}_0^{\intercal}\Pi_{0}(t)+\mathbb{S} _{0}^{\intercal})\\+\delta_{0}\Pi_{0}(t) \Sigma_{0}\Sigma_{0} ^{\intercal}\Pi_{0}(t)\Big)X^{0}_{t} +\Big(\dot{s}_{0}(t)+( (\widetilde{A}_0+\widetilde{B}_{0}\Xi(t))^{\intercal}-\Pi_{0}(t)\mb{B}_0R_{0}^{-1}\mb{B}_0^{\intercal}-\mathbb{S} _{0}R_{0}^{-1}\mb{B}_0^{\intercal}\\+\delta_{0}\Pi_{0}(t) \Sigma_{0}\Sigma_{0} ^{\intercal}  )s_{0}(t)+\Pi_{0} (t)\mb{B}_0R_{0}^{-1}\bar{n}_0 +\Pi_{0} (t)(\widetilde{M}_{0}(t)+\widetilde{B}_{0}\varsigma(t))+\mathbb{S}_{0}R_{0}^{-1}\bar{n}_0 -\bar{\eta }_{0}\Big)=0.    
\end{multline}
This linear-state feedback form is obtained through a change of measure to $\widehat{\mb{P}}^0$, defined by $\frac{d\widehat{\mathbb{P}}^0}{d\mathbb{P}}=\exp(-\frac{\delta_0 ^{2}}{2}\int_{0}^{T}\left\|\gamma_t^0 \right\|^{2}dt+\delta_0 \int_{0}^{T} (\gamma^0_t)^\intercal dW_{t}^0)$, where $\gamma^0_t=\delta_{0}\Sigma _{0}^{\intercal}(\Pi_{0}(t)X^{0}_{t}+s_{0}(t))$. Under the equivalent measure $\widehat{\mb{P}}^0$, the process
\begin{equation} \label{martmajor}
 \frac{M_{2,t}^0(u^{0,*})}{M_{1,t}^0(u^{0,*})}=\widehat{M}^{0}_{t}(u^{0,*})
 \end{equation}
is a martingale, and we have   
\begin{equation}
 (\Upsilon ^{-1}_{\widetilde{A}_0+\widetilde{B}_{0}\Xi}(t))^{\intercal}\widehat{M}^{0}_{t}(u^{0,*})-(\Upsilon ^{-1}_{\widetilde{A}_0+\widetilde{B}_{0}\Xi}(t))^{\intercal}\int_{0}^{t}(\Upsilon _{\widetilde{A}_0+\widetilde{B}_{0}\Xi}(s))^{\intercal}(\mathbb{Q} _{0}X^{0}_{s}+\mathbb{S} _{0}u^{0,*}_s-\bar{\eta }_{0})ds= \Pi_0(t) X^{0}_{t} + s_0(t),  
\end{equation}
where $\Upsilon _{\widetilde{A}_0+\widetilde{B}_{0}\Xi}(t)$ and $\Upsilon ^{-1}_{\widetilde{A}_0+\widetilde{B}_{0}\Xi}(t)$ are defined as in \eqref{ODE_trans}. By applying Ito's lemma to both sides of the above equation and equating the resulting SDEs, we obtain \eqref{eqmajor}.
However, we cannot proceed any further at this point since $\Xi$ and $\varsigma$ are not yet characterized. We hence turn to the problem of a representative minor agent. 
Using the mean-field representation \eqref{feedbackubar_major} and the major agent's best-response strategy \eqref{optconmajor}, the extended dynamics of minor agent $i$ are given by 
\begin{equation} \label{dyminet}
dX^{i}_t=( \grave{A}_{k}(t)X^{i}_t+\mathbb{B}_{k}u^{i}_{t}+\grave{M}_{k}(t))dt+\Sigma _{k}dW^{i}_{t}  \end{equation}
where
\begin{gather}
 \grave{A}_k(t) = \left[ \begin{array}{cc} A_k & [G_k \, \, \, F_k^{\pi}]\\ 0 &\widetilde{A}_0+\widetilde{B}_{0} \Xi-\mb{B}_0R_{0}^{-1}(\mathbb{S} _{0}-\mb{B}_0^{\intercal}\Pi_{0}(t)) \end{array} \right], \,\,\,
  \grave{M}_k(t) = \left[ \begin{array}{c} b_k(t) \\ \widetilde{M}_{0}(t)-\mb{B}_0R_{0}^{-1}\mb{B}_0^{\intercal}s_{0}(t)+\widetilde{B}_{0}\varsigma(t) \end{array}\right].
\end{gather}
From Theorem \ref{thm:Cntrl_initial}, the best-response of minor agent $i$ having a cost functional \eqref{costmininf} is given by 
\begin{equation} \label{optminor1}
u^{i,*}_t =-R_{k}^{-1}\Bigg[\mathbb{S} _{k}^{\intercal}X^{k}_{t}-\bar{n}_k+\mb{B}_k^{\intercal}\left ((\Upsilon^{-1}_{ \grave{A}}(t))^{\intercal}\frac{M_{2,t}^i(u^{i,*})}{M_{1,t}^i(u^{i,*})}-(\Upsilon ^{-1}_{ \grave{A}}(t))^{\intercal}\int_{0}^{t}(\Upsilon _{ \grave{A}_k}(s))^{\intercal}(\mathbb{Q} _{k}X^{i}_{s}+\mathbb{S} _{k}u^{i,*}_s-\bar{\eta }_{k})ds  \right )\Bigg],\end{equation}
where $\Upsilon_{ \grave{A}}(t)$ satisfies \eqref{ODE_trans}, and the martingale terms $M_{1,t}^i(u^{i,*})$ and $M_{2,t}^i(u^{i,*})$ are defined as in \eqref{M1t} and \eqref{M2t}, respectively. Similarly, according to Lemma \ref{thm:changeM}, Theorem \ref{thm:cntrl_NewM}, and Corollary \ref{coro4}, \eqref{optminor1} admits the unique feedback form 
\begin{equation} \label{optconminor}
 u^{i,*}_t = - R_k^{-1} \left[ \mathbb{S}_k^\intercal \, X^{i,*}_t -\bar{n}_k+  \mathbb{B}_k^\intercal \left(\Pi_k(t) \,X^{i,*}_t + s_k(t) \right) \right],
\end{equation}
where 
\begin{multline} \label{eqminor}
\Big(\dot{\Pi}_{k}(t)+\mathbb{Q}_{k}+\grave{A}_k(t)^{\intercal}\Pi_{k} (t)+\Pi_{k} (t)\grave{A}_k(t)-(\Pi_{k}(t)\mb{B}_k+\mathbb{S} _{k})R_{k}^{-1}(\mb{B}_k^{\intercal}\Pi_{k}(t)+\mathbb{S} _{k}^{\intercal})\\+\delta_{k}\Pi_{k}(t) \Sigma_{k}\Sigma_{k} ^{\intercal}\Pi_{k}(t)\Big)X^{i}_{t} +\Big(\dot{s}_{k}(t)+( \grave{A}_k(t)^{\intercal}-\Pi_{k}(t)\mb{B}_kR_{k}^{-1}\mb{B}_k^{\intercal}-\mathbb{S} _{k}R_{k}^{-1}\mb{B}_k^{\intercal}+\delta_{k}\Pi_{k}(t) \Sigma_{k}\Sigma_{k} ^{\intercal}  )s_{k}(t)\\+\Pi_{k} (t)\mb{B}_kR_{k}^{-1}\bar{n}_k +\Pi_{k} (t)\grave{M}_{k}(t)+\mathbb{S}_{k}R_{k}^{-1}\bar{n}_k -\bar{\eta }_{k}\Big)=0.   
\end{multline}
The state feedback form \eqref{optconminor} is obtained through a change of measure to $\widehat{\mb{P}}^i$, defined by $\frac{d\widehat{\mathbb{P}}^i}{d\mathbb{P}}=\exp(-\frac{\delta_k ^{2}}{2}\int_{0}^{T}\left\|\gamma_t^i \right\|^{2}dt+\delta_k \int_{0}^{T} (\gamma^i_t)^\intercal dW_{t}^i)$, with $\gamma^i_t = \delta_{k}\Sigma _{k}^{\intercal}(\Pi_{k}(t)X^{i}_{t}+s_{k}(t))$, such that the process
\begin{equation} \label{martmainor}
 \frac{M_{2,t}^i(u^{i,*})}{M_{1,t}^i(u^{i,*})}=M^{i}_{t}(u^{i,*})
 \end{equation}
is a $\widehat{\mathbb{P}}^{i}$-martingale. 
To continue our analysis, we then characterize $\bar{u}$ by applying the consistency condition (ii). To this end, we represent $\Pi_{k}$ and $\mathbb{S}_k$  in \eqref{optconminor} as in \eqref{pk}. From \eqref{optconminor} and \eqref{pk}, the average control action of a minor agent in subpopulation $k$ is given by 
  \begin{equation}\label{minorCntrlAve}
 u^{(N_k)}_t = - R_k^{-1} \Bigg(\begin{bmatrix} \mathbb{S}_{k,11}^\intercal+B_{k}^\intercal\Pi_{k,11} & \mathbb{S}_{k,21}^\intercal+B_{k}^\intercal\Pi_{k,12} & \mathbb{S}_{k,31}^\intercal+B_{k}^\intercal\Pi_{k,13}\end{bmatrix}\begin{bmatrix}
 x^{(N_k)}_{t} \\
 x^{0}_{t}\\
 \bar{x}_{t} \end{bmatrix} -\bar{n}_k+{B}_k^\intercal s_{k,11} \Bigg).
 \end{equation}
In the limit, as $N_k \rightarrow \infty$, $ u^{(N_k)}_t$ converges in quadratic mean to (see e.g. \citet{KizilkaleTAC2016}) 
 \begin{align}\label{minorCntrlMF}
 \bar{u}^{k}_t = - R_k^{-1} \Bigg(
 \begin{bmatrix} \mathbb{S}_{k,11}^\intercal+B_{k}^\intercal\Pi_{k,11} & \mathbb{S}_{k,21}^\intercal+B_{k}^\intercal\Pi_{k,12} & \mathbb{S}_{k,31}^\intercal+B_{k}^\intercal\Pi_{k,13}\end{bmatrix} \begin{bmatrix}
 \bar{x}^{k}_{t} \\
 x^{0}_{t}\\
 \bar{x}_{t}
 \end{bmatrix} -\bar{n}_k+{B}_k^\intercal s_{k,11} \Bigg).
 \end{align}
We observe that the expression in \eqref{minorCntrlMF} has the same structure as \eqref{feedbackubar_major}. Hence, by comparing these two equations, we specify $\Xi$ and $\varsigma$ in terms of $\Pi_k$ and $s_k,\, k \in \mfK$. We then substitute the obtained expressions for $\Xi$ and $\varsigma$ in \eqref{mfeq}, \eqref{eqmajor}, and \eqref{eqminor} to obtain \eqref{MF_uperturbed}, \eqref{Pinash} and \eqref{snash}. 
\end{proof}

\begin{remark}(Comparison of equilibria in the risk-sensitive and risk-neutral cases)
In risk-neutral MFGs with a major agent, neither the volatility of the major agent nor that of the minor agents explicitly affects the Nash equilibrium. This is not the case for the corresponding risk-sensitive MFGs, where we observe the following:
\begin{itemize}
\item The mean field is influenced by the volatility of the major agent and the volatility of all $K$ types of minor agents.
\item The equilibrium control action of the major agent explicitly depends on its own volatility.
\item The equilibrium control action of a  representative minor agent explicitly depends on its own volatility as well as on the volatility of the major agent.
\item The equilibrium control actions of the major agent and of a representative minor agent are impacted by the volatility of the $K$ types of minor agents through the mean-field equation.
\end{itemize}
Furthermore, in the risk-neutral case, only the first block rows of $\Pi_k$ and $s_k$ impact the equilibrium control, as shown in \citet{firoozi2020convex}. However, in the risk-sensitive case, all the blocks of $\Pi_k$ and $s_k$ have an impact on the equilibrium control actions.
\end{remark}
\subsubsection{Solution of Consistency Equations}
In this section, we discuss the solvability of the set of mean field consistency equations given by \eqref{Pinash}-\eqref{snash}. We note that \eqref{Pinash} represents a set of coupled Riccati equations, which may be solved independently from \eqref{snash}. It is challenging to analytically show the existence and uniqueness of a solution to \eqref{Pinash}. However, given a solution to \eqref{Pinash}, \eqref{snash} may be viewed as a system of coupled first order linear ODEs, for which a unique analytical solution is guaranteed.

Here, we use a numerical scheme to solve the set of consistency equations, \eqref{Pinash}-\eqref{snash}, for a specific system instance. 
To this end, we adapt the iterative method used in \cite{huang2010large}. In particular, in our case $\bar{A}$ and $\bar{G}$ are time-dependent functions defined on $\mathcal{T}$. Our algorithm initializes with arbitrary trajectories for $\bar{A}$ and $\bar{G}$ and iterates until the corresponding trajectories of two consecutive iterations converge, where  the error in iteration $j$ relative to iteration $j-1$ is defined by 
\begin{equation}
\text{error}^{(j)} = \left|\bar{A}^{(j)} - \bar{A}^{(j-1)}\right|_{\infty} + \left|\bar{G}^{(j)} - \bar{G}^{(j-1)}\right|_{\infty}, \quad j = 1, 2, \ldots
\end{equation}
with $\left| \cdot \right|_{\infty }$ denoting the supremum norm and the superscript $(\cdot)$ indicating the iteration number.

We illustrate a particular case of the system described by equations \eqref{dynamicmaj}-\eqref{costmin} where the dynamics and cost functionals are given by 
\begin{align*} 
dx^{0}_{t}&=(-2.5x^{0}_{t}+2.5x^{(N)}_{t}+u^{0}_t)dt+0.5dw^{0}_{t},\notag \\
dx^{i}_{t}&=(-5x^{i}_{t}+2.5x^{(N)}_{t}+2.5x^{0}_{t}+u^{i}_{t})dt+0.5 dw^{i}_{t}, \notag 
\end{align*}
\vspace{-0.6cm}
\begin{align*}
&J^{(N)}_0(u^{0},u^{-0})=\mathbb{E}\left[ \textrm{exp}\left(\int_{0}^{T}\left(10\left( x^{0}_{t}-x^{(N)}_{t} \right )^{2}+(u^{0}_t)^{2}\right)dt\right)\right],\notag \\
&J^{(N)}_i(u^{i},u^{-i})=\mathbb{E}\left [ \textrm{exp}\left(\int_{0}^{T}\left(7\left ( x^{i}_{t}-0.5x^{(N)}_{t}-0.5x^{0}_{t} \right )^{2}+\left ( u^{i}_{t} \right )^{2}\right)dt\right)\right].\notag
\end{align*}
Fig. \ref{barA} and Fig. \ref{barG} illustrate, respectively, the trajectories of $\bar{A}^{(j)}$ and $\bar{G}^{(j)}$ over iterations $j = 1, 2, \ldots, 7$. The trajectories show convergence after just a few iterations. Specifically, the iterative error is reduced to $0.7 \times 10^{-14}$ after ten iterations, starting from the initial trajectories $\bar{A}^{(0)}(t)=\bar{G}^{(0)}(t)=0,\, \forall t \in \mc{T}$. Moreover, for the completeness of numerical solutions, Fig. \ref{barm} depicts the associated trajectories for $\bar{m}^{(j)}$, obtained using a similar iterative method.
\begin{figure}[h] 
\begin{minipage}[t]{.32 \textwidth} 
    \includegraphics[width=\textwidth]{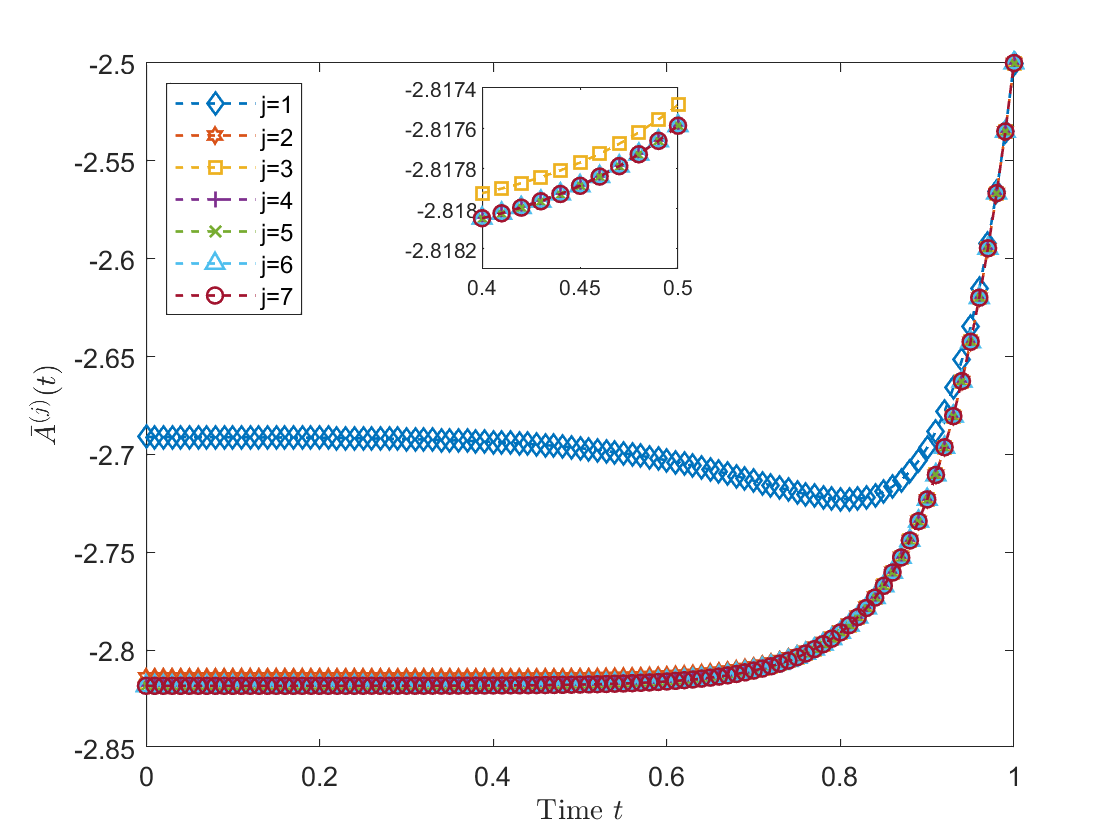}
    \caption{{Trajectories of $\bar{A}^{(j)}$ for $j = 1,\ldots, 7$.}}
    \label{barA}
    \end{minipage}
\hfill 
\begin{minipage}[t]
{.32\textwidth} 

\includegraphics[width=\textwidth]{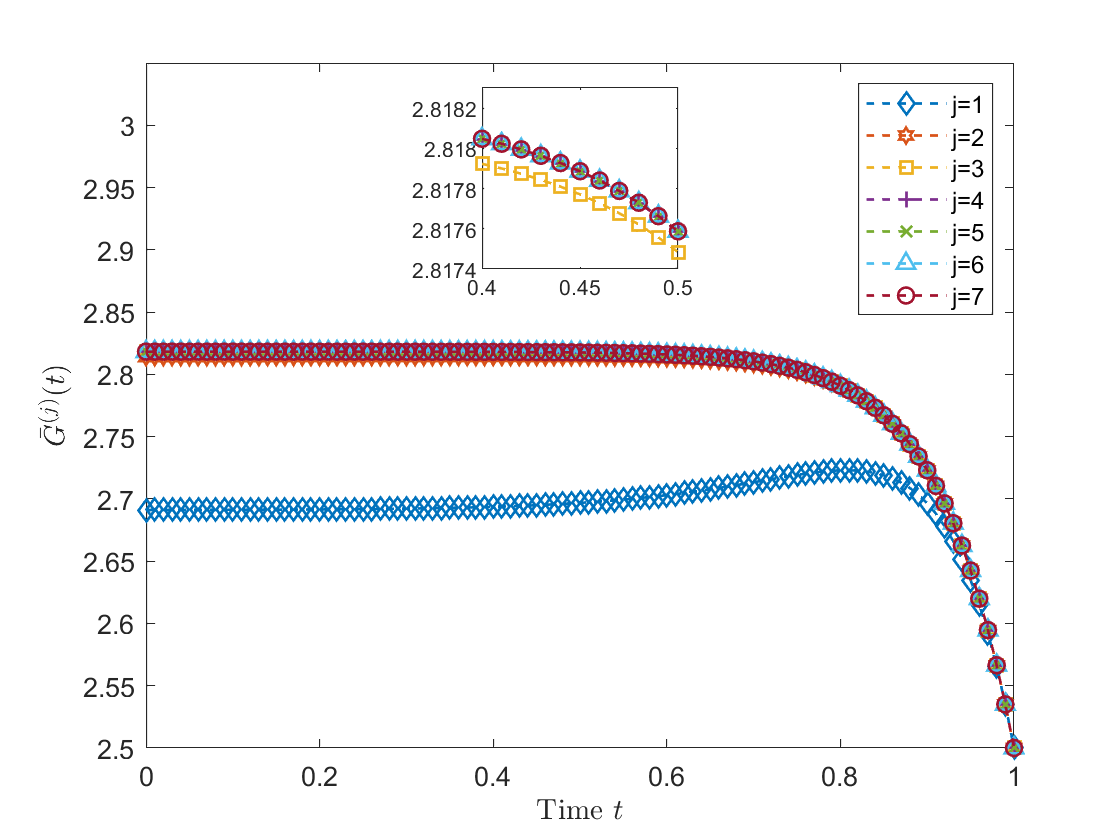}
    \caption{Trajectories of $\bar{G}^{(j)}$ for $j = 1, \ldots, 7$.}
    \label{barG}
     
\end{minipage}
\hfill 
\begin{minipage}[t]{.32\textwidth} 
 \includegraphics[width=\textwidth]{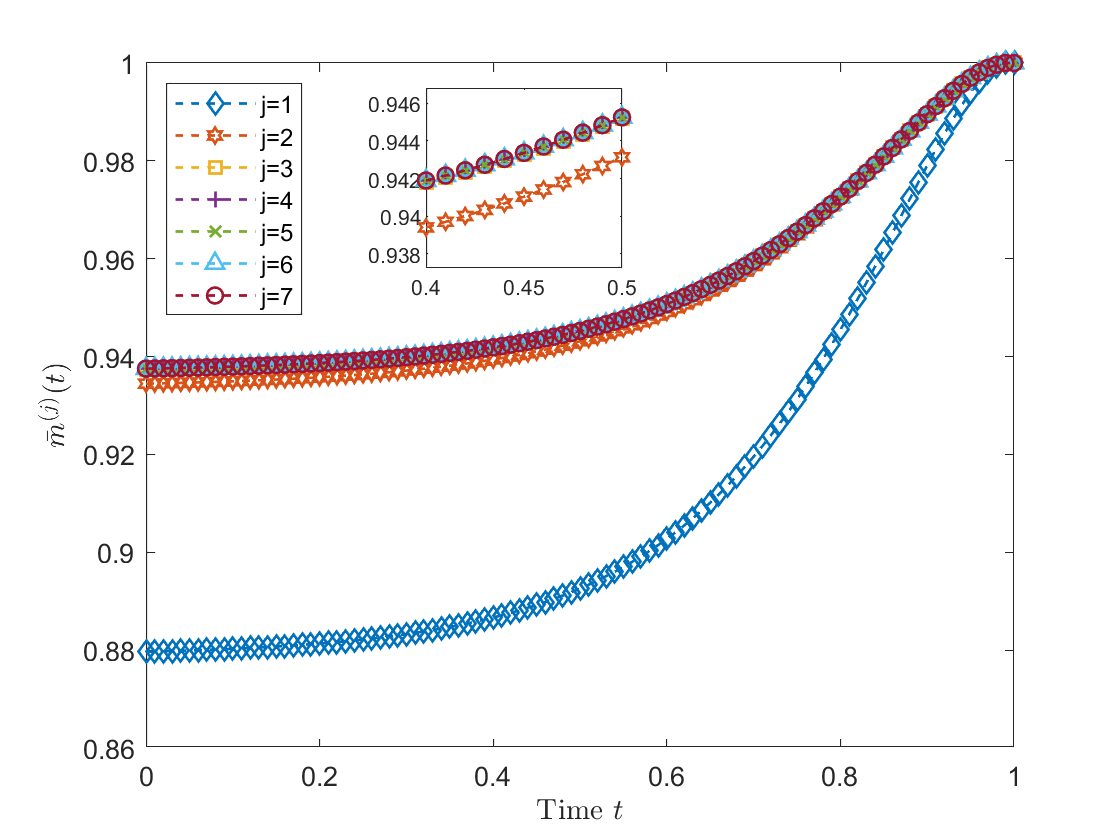}
    \caption{Trajectories of $\bar{m}^{(j)}$ for $j = 1, \ldots, 7$.}
    \label{barm}
\end{minipage}
\end{figure}

\subsection{$\varepsilon$-Nash Property}
\label{sec_epsilon}
In this section, we show that the control laws defined in the previous section yield an $\varepsilon$-Nash equilibrium for the $N$-player game described in Section \ref{sec:finite_pop}  under certain conditions. Due to the fact that linear-quadratic (LQ) exponential cost functional do not admit the boundedness or Liptschitz continuous properties, a proof of the $\varepsilon$-Nash property without imposing further conditions on the system is still an open question. To the best of our knowledge, the only way to establish the $\varepsilon$-Nash property is to find a relationship between linear-quadratic risk-sensitive and risk-neutral cost functionals (see also \citet{moon2019risk}). To be more precise, we first represent the risk-sensitive cost functional as  
\begin{gather}
J(u)=\mathbb{E}\left[ \exp\left(g(x_{T},v_{T}) +\int_{0}^{T}f(x_{t},u_{t},v_{t})dt\right) \right]\\
f(x_{t},u_{t},v_{t})= \frac{\delta}{2}\left(\left< Q(x_{t}-v_{t}),x_{t}-v_{t}\right> +2\left<Su_{t},x_{t}-v_{t} \right>+\left< Ru_{t},u_{t}\right> \right)\\
g(x_{T},v_{T})=\frac{\delta}{2}\left<\widehat{Q}(x_{T}-v_{T}),x_{T}-v_{T}\right>,
\end{gather}
where $v_{t}$ is a square integrable process and we drop the index $i$ for notational brevity. We also assume that $R>0$, $\widehat{Q}\geq 0$, and $Q-SR^{-1}S^{\intercal } \geq 0$.
The desired relationship can be built if the following inequalities hold 
\begin{align} \label{dj}
&\left|J(u_{1})-J(u_{2}) \right|\notag \\ &\leq  \mathbb{E}\Bigg[\Big\vert \exp(g(x^{1}_{T},v^{1}_{T}) +\int_{0}^{T}f(x^{1}_{t},u^{1}_{t},v^{1}_{t})dt)-\exp(g(x^{2}_{T},v^{2}_{T}) +\int_{0}^{T}f(x^{2}_{t},u^{2}_{t},v^{2}_{t})dt) \Big\vert \Bigg]\notag\\ &\leq C\mathbb{E}\Bigg[\Bigg\vert g(x^{1}_{T},v^{1}_{T})- g(x^{2}_{T},v^{2}_{T})+\int_{0}^{T}f(x^{1}_{t},u^{1}_{t},v^{1}_{t})-f(x^{2}_{t},u^{2}_{t},v^{2}_{t})dt\Bigg\vert\Bigg], 
\end{align}
where $C$ is a constant that does not depend on a particular sample path of the stochastic processes involved. However, without any further assumptions, we cannot claim \eqref{dj} since the exponential function and quadratic functions are not globally Lipschitz continuous or bounded. A compromise to address this issue is to confine the control and state vectors within a sufficiently large (unattainable) compact set in Euclidean space without affecting the optimization outcome. This restriction is customary in the context of LQG risk-sensitive problems and appears in one way or another in different methodologies. For more details on this restriction, we refer the reader to \citet{moon2018risk,moon2019risk}, Chapter 6 of \citet{bacsar1998dynamic}, and \citet{lim2005new}, among others.

From \eqref{dj}, the proof of the $\epsilon$-Nash property for LQG risk-sensitive MFGs may be reduced to establishing the same property for LQG risk-neutral MFGs. We refer to \citet{huang2006large} for a related proof for LQG risk-neutral MFGs. In this paper, we leverage the imposed restriction to present an alternative proof. It is easy to see that, under this restriction, we further have
\begin{align} \label{transrs}
 & \mathbb{E}\Bigg[\Bigg\vert g(x^{1}_{T},v^{1}_{T})- g(x^{2}_{T},v^{2}_{T})+\int_{0}^{T}f(x^{1}_{t},u^{1}_{t},v^{1}_{t})-f(x^{2}_{t},u^{2}_{t},v^{2}_{t})dt\Bigg\vert\Bigg]    \notag\\& \leq C\mathbb{E}\Bigg[\left\|x^{1}_{T}-x^{2}_{T} \right\|_{1}+\left\|v^{1}_{T}-v^{2}_{T} \right\|_{1}+\int_{0}^{T}\left(\left\|x^{1}_{t}-x^{2}_{t} \right\|_{1}+\left\|u^{1}_t-u^{2}_t \right\|_{1} +\left\|v^{1}_{t}-v^{2}_t \right\|_{1}\right)dt\Bigg], 
 \end{align}
 where $\left\|. \right\|_{1}$ is the $L^{1}$ vector norm  defined as $\left\| x\right\|_{1}=\sum_{j=1}^{n}\left|x_{j} \right|$, with $x=\left[x_{1}\,\, x_{2}\,\, \dots\, x_{n}\right]^\intercal$. The proof of the $\varepsilon$-Nash property can then be established by using the basic approximation of linear SDEs. 
\begin{theorem}
Suppose that Assumptions \ref{assum:majorCost}--\ref{ass:MajorControl} hold, and that the control and state vectors are restricted within sufficiently large compact sets. Further, suppose that there exists a sequence of real numbers $\{\tau_N, N \in \{1,2,\dots\}\}$ such that $\tau_N \rightarrow 0$ and $\left| \tfrac{N_k}{N} -  \pi_k\right| = o(\tau_N)$, for all $k \in \mc{K}$. The set of control laws $\{u^{0,*}, u^{i,*}, i \in \mc{N}\}$ where $u^{0,*}$ and $u^{i,*}$ are respectively given by  \eqref{majlaw}--\eqref{coeff}, forms an $\varepsilon$-Nash equilibrium for the finite-population system described by \eqref{dynamicmaj}--\eqref{costmin}. That is, for any alternative control action $u^{i} \in \mc{U}^g, i\in \mc{N}_0$, there is a sequence of nonnegative numbers $\{\varepsilon_{N}, N \in \{1,2,\dots\} \}$ converging to zero, such that 
\end{theorem}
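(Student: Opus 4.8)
The plan is to establish the $\varepsilon$-Nash property by leveraging the two ingredients already in place: the restriction of state and control processes to sufficiently large compact sets, which through the Lipschitz-type estimates \eqref{dj}--\eqref{transrs} reduces a comparison of risk-sensitive cost functionals to an $L^1$-comparison of the underlying state, control, and mean-field-proxy processes; and the optimality of $u^{0,*}$ and $u^{i,*}$ for the respective limiting problems, established in Theorem \ref{Nash Equilibrium}. The skeleton is the classical three-term chain: finite-population cost under equilibrium $\le$ limiting cost under equilibrium $+\,\varepsilon_N$, then optimality in the limit, then limiting cost under a deviation $\le$ finite-population cost under that deviation $+\,\varepsilon_N$.

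First I would prove a quantitative approximation lemma: when the major agent and all but one minor agent follow the equilibrium laws \eqref{majlaw}--\eqref{minorlaw}, the empirical average $x^{(N)}_t$ converges in $L^2$, uniformly in $t \in \mc{T}$, to the mean field $\bar{x}_t$ solving \eqref{MF_uperturbed}. Averaging \eqref{dynamicmin} over $i\in\mc{N}$ under the closed-loop laws, subtracting the mean-field equation, and applying Gronwall's inequality, the discrepancy splits into a martingale contribution $\tfrac{1}{N}\sum_{i}\int \sigma_k\,dw^i$, whose $L^2$-norm is $O(N^{-1/2})$ by independence of the minor noises, and a bias term governed by $|N_k/N-\pi_k|=o(\tau_N)$. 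This yields $\sup_{t}\mathbb{E}\|x^{(N)}_t-\bar{x}_t\|^2 = O(N^{-1}+\tau_N^2)$, and hence a vanishing $L^1$ bound through \eqref{transrs}.

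Second, for the major agent I would fix an arbitrary admissible deviation $u^{0}\in\mc{U}^g$ while the minor agents retain $u^{i,*}$, and use the approximation lemma together with \eqref{transrs} to show that $J_0^{(N)}(u^{0},u^{-0,*})$ and the limiting cost $J_0^\infty(u^{0})$ (in which $x^{(N)}$ is replaced by $\bar{x}$) differ by at most some $\varepsilon_N^{(0)}\to 0$. The chain
\[
J_0^{(N)}(u^{0,*},u^{-0,*}) \le J_0^{\infty}(u^{0,*}) + \varepsilon_N^{(0)} \le J_0^{\infty}(u^{0}) + \varepsilon_N^{(0)} \le J_0^{(N)}(u^{0},u^{-0,*}) + 2\varepsilon_N^{(0)},
\]
whose middle inequality is the optimality of $u^{0,*}$ for $J_0^\infty$ from Theorem \ref{Nash Equilibrium}, delivers the required bound with $\varepsilon_N=2\varepsilon_N^{(0)}$. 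For a representative minor agent $i$ the argument is analogous, with the extra observation that a single-agent deviation perturbs $x^{(N)}$, and hence the common proxy entering both the dynamics of $x^0$ and agent $i$'s cost, by only $O(1/N)$ (asymptotic negligibility); the same chain, now invoking the optimality of $u^{i,*}$ for $J_i^\infty$, then applies.

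I expect the main obstacle to be the approximation lemma itself. The delicate point is propagating the $L^2$ estimate through the fully coupled system, in which the minor states feed on the major state $x^0$ through $G_k x^0$ and on $\bar{x}$, which is itself driven by $x^0$, while keeping the error uniform over admissible deviations confined to the compact set and tracking the precise dependence on the rate $\tau_N$. In particular, the major agent's deviation perturbs \emph{every} minor trajectory through the common channel $x^0$, so one must verify that the mean-field equation \eqref{MF_uperturbed}, now driven by the \emph{deviated} $x^0$, continues to approximate $x^{(N)}$ off the equilibrium trajectory; establishing this robustly, rather than only along the equilibrium path, is the crux of the estimate.
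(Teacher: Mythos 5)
Your proposal follows essentially the same route as the paper: the compact-set restriction together with \eqref{dj}--\eqref{transrs} reduces the comparison of risk-sensitive costs to $L^{1}$ estimates on the coupled state processes, a Gronwall argument with a martingale term of order $N^{-1/2}$ and a bias term of order $o(\tau_N)$ yields the finite-versus-infinite population cost approximation, and the standard three-term chain (applied to arbitrary deviations, with the additional $O(1/N)$ term for a single minor agent's deviation) delivers the $\varepsilon$-Nash property. The ``crux'' you flag at the end is resolved in the paper exactly as you anticipate: the finite- and infinite-population systems are both written under the same arbitrary deviated control (and the same driving noises), so the deviation cancels upon subtracting the two systems, and the Gronwall estimate is automatically uniform off the equilibrium path.
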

\begin{equation}
J_{i}^{(N)}(u^{i,*},u^{-i,*}) \leq   J_{i}^{(N)}(u^{i},u^{-i,*})+\varepsilon_{N} ,\quad  i\in \mc{N}_0
\end{equation}
where $\varepsilon_{N}= o(\tfrac{1}{\sqrt{N}}) + o(\tau_N)$.
\begin{proof}
Without loss of generality, to streamline the notation and facilitate the understanding of the approach, we provide a proof for a simplified scenario, where processes are scalar and where $H_{0}=\delta_{0}=1$ and $H_{k}=\widehat{H}_k=\delta_{k}=1 \forall k\in \mc{K}$. 
The proof may be readily extended to the general system considered in Section \ref{sec:finite_pop}. Additionally, we use the notation $A \lesssim B$ to indicate that $A \leq CB$ for some constant $C$.

We establish the $ \epsilon$-Nash property for the major agent and for a representative minor agent. \\
\textit{(I) $ \epsilon$-Nash property for the major agent}: Under the assumption that all minor agents follow the Nash equilibrium strategies $\{u^{i,*}, i \in \mc{N}\}$ given in Theorem \ref{Nash Equilibrium} and that the major agent adopts an arbitrary strategy $u^0$, we introduce the following finite-population and infinite-population systems for the major agent. These systems share the same initial conditions.
\begin{itemize}
\item \textit{Infinite-Population System:} The major agent's state and the mean field are represented, respectively, by $x^{0}_{t}$ and $(\bar{x}_{t})^\intercal := [(\bar{x}^{1}_{t})^\intercal, \dots, (\bar{x}^{K}_{t})^\intercal]$. The dynamics are given by
 \begin{align} \label{majoyinf}
\begin{cases}
dx^{0}_{t} = &[A_0\, x^{0}_{t} + F_{0}^{\pi}\bar{x}_{t} + B_0u^0_{t}]dt + \sigma_0(t)\, dw^0_t\\
d\bar{x}^{k}_{t}=&[(A_{k}-R^{-1}_{k}(B_{k})^{2}(\Pi_{k,11}(t)+\mathbb{S} _{k,11}))\bar{x}^{k}_{t}-R^{-1}_{k}(B_{k})^{2}(\Pi_{k,13}(t)+\mathbb{S} _{k,13}^{\intercal})\bar{x}_{t}\\&+\bar{G}_{k}x_{t}^{0}+F_{k}^{\pi}\bar{x}_{t}+\bar{m}_{k}(t)]dt, \qquad k \in \mc{K},  
\end{cases}  
\end{align}
where the coefficients $\bar{G}_k$ and $\bar{m}_k$ are given, respectively, by \eqref{Pinash} and \eqref{snash}. This system contains $K+1$ equations. We represent it in expanded form to facilitate the analysis. For this case, the major agent's cost functional is given by
\begin{align} \label{l_0}
&J_{0}^{\infty }(u^{0},u^{-0,*})=\mathbb{E}\left[ \exp\left(\int_{0}^{T}f^{0}(x^{0}_{t},u^{0}_{t},\pi \bar{x}_{t}+\eta_{0})dt+g^{0}(x^{0}_{T},\pi \bar{x}_{T}+\eta_{0})\right) \right]  \notag \\ 
&f^{0}(x_{t},u_{t},v_{t})= \frac{1}{2} Q_{0}(x_{t}-v_{t})^{2} +S_{0}u_{t}(x_{t}-v_{t})+\frac{1}{2}R_{0}u_{t}^{2} \notag \\  & g^{0}(x_{T},v_{T})=\widehat{Q}_{0}(x_{T}-v_{T})^{2}.
\end{align}    
    \item \textit{Finite-Population System:} This system consists of one major agent and $N$ minor agents. We represent the major agent's state by $x^{0,(N)}_{t}$ and the vector of average states by $(x^{[N]})^{\intercal}= [(x^{(N_1)})^{\intercal}, (x^{(N_2)})^{\intercal},\dots, (x^{(N_K)})^{\intercal}]$ (as defined in \eqref{subave}). Furthermore, we introduce the vector process $(\bar{x}^{(N)})^{\intercal}= [\bar{x}^{1,(N)}_{t}, \bar{x}^{2,(N)}_{t},\dots, \bar{x}^{K,(N)}_{t}]$, which is calculated using the mean-field equation \eqref{mfeq}, where the vector of average states $x^{[N]}_t$ is used. These processes satisfy
\begin{align}
\begin{cases} 
dx^{0,(N)}_{t} &= [A_0\, x^{0,(N)}_{t} + F_{0}^{\pi^{(N)}}x_{t}^{[N]} + B_0u^{0}_{t}]dt + \sigma_0 (t)dw^0_t\\
dx^{(N_{k})}_{t}&=[(A_{k}-R_{k}^{-1}(B_{k})^{2}(\Pi_{k,11}(t)+\mathbb{S} _{k,11}))x^{(N_{k})}_{t}-R_{k}^{-1}(B_{k})^{2}(\Pi_{k,13}(t)+\mathbb{S} _{k,13}^{\intercal})\bar{x}^{(N)}_{t}\\&\hspace{1cm}+\bar{G}_{k}x^{0,(N)}_{t}+F_{k}^{\pi^{(N)}}x_{t}^{[N]}+\bar{m}_{k}(t)]dt +\frac{1}{N_{k}}\sum_{i\in \mc{I}_{k}}\sigma _{k}(t)dw^{i}_{t}, \qquad k\in \mc{K}\\ 
d\bar{x}^{k,(N)}_{t}&=[(A_{k}-R^{-1}_{k}(B_{k})^{2}(\Pi_{k,11}(t)+\mathbb{S} _{k,11}))\bar{x}^{k,(N)}_{t}-R^{-1}_{k}(B_{k})^{2}(\Pi_{k,13}(t)+\mathbb{S} _{k,13}^{\intercal})\bar{x}^{(N)}_{t}\\ &\hspace{5cm}+\bar{G}_{k}x_{t}^{0,(N)}+F_{k}^{\pi}\bar{x}^{(N)}_{t}+\bar{m}_{k}(t)]dt, \qquad k \in \mc{K}, \label{majorN}
\end{cases}
\end{align}
where the dynamics of $x^{(N_{k})}$ are obtained using \eqref{dynamicmin}, and the Nash equilibrium strategies using $x^{i,(N)}_{t}$, $x^{0,(N)}_{t}$, and $\bar{x}^{(N)}_{t}$ instead of, respectively, $x^{i}_{t}$, $x^{0}_{t}$, and $\bar{x}_{t}$.

The major agent's cost functional is then given by 
\begin{equation} \label{diffJ0}
J_{0}^{(N)}(u^{0},u^{-0,*})=\mathbb{E}\left[ \exp\left( \int_{0}^{T}f^{0}(x^{0,(N)}_{t},u^{0}_{t},\pi^{(N)}x_{t}^{[N]}+\eta_{0})dt+g^{0}(x^{0,(N)}_{T},\pi^{(N)}x_{T}^{[N]}+\eta_{0})\right) \right].  
\end{equation}
\end{itemize}
From \eqref{transrs}, we have 
\begin{align}
&\left|J_{0}^{(N)}(u^{0},u^{-0,*})-J_{0}^{\infty }(u^{0},u^{-0,*}) \right|\notag \\ & \lesssim \int_{0}^{T}\mathbb{E}\left [\left|x^{0}_{t}-x^{0,(N)}_{t} \right|+\left|\pi \bar{x}_{t}-\pi^{(N)}x_{t}^{[N]}  \right|\right ]dt+\mathbb{E}\left| \pi \bar{x}_{T}-\pi^{(N)}x_{T}^{[N]} \right|+\mathbb{E}\left|x^{0}_{T}-x^{0,(N)}_{T} \right|.\label{inter1}
\end{align}
Moreover, we can write 
\begin{align} \label{pixt}
\left| \pi \bar{x}_{t}-\pi^{(N)}x_{t}^{[N]} \right|\leq \left|\pi \bar{x}_{t}-\pi x_{t}^{[N]} \right|+ \left|\pi x_{t}^{[N]}-\pi^{(N)}x_{t}^{[N]}  \right|\leq \left\|\bar{x}_{t}- x_{t}^{[N]}\right\|_{1}+C \tau _{N} 
\end{align}
for all $t \in \left [ 0,T \right ]$, where $\tau_{N}:=\sup_{1\leq i\leq K} \left|\pi^{(N)}_{k}-\pi_{k} \right|$ is a sequence converging to zero. Using \eqref{pixt}, \eqref{inter1} may be written as
\begin{align} \label{J0N}
&\left|J_{0}^{(N)}(u^{0},u^{-0,*})-J_{0}^{\infty }(u^{0},u^{-0,*}) \right|\notag \\ & \lesssim \int_{0}^{T}\mathbb{E}\left [ \left|x^{0}_{t}-x^{0,(N)}_{t} \right|+\left\|\bar{x}_{t}- x_{t}^{[N]}\right\|_{1}+\tau _{N}\right ]dt+\mathbb{E}\left\|  \bar{x}_{T}-x_{T}^{[N]} \right\|_{1}+\mathbb{E}\left|x^{0}_{T}-x^{0,(N)}_{T} \right|+\tau _{N}\notag \\ &\lesssim \int_{0}^{T}\xi^{0}_{N}(t)dt+\xi^{0}_{N}(T),
\end{align}
where
\begin{equation}  
 \label{gnt}
\xi^{0}_{N}(t):= \mathbb{E}\left|x^{0}_{t}-x^{0,(N)}_{t} \right|+ \mathbb{E}\left\|\bar{x}_{t}-\bar{x}_{t}^{(N)} \right\|_{1}+\mathbb{E}\left\|\bar{x}_{t}^{(N)}- x_{t}^{[N]}\right\|_{1}+\tau_{N}
\end{equation}
and $\xi^{0}_{N}(0)=\tau_{N}$. We aim to find an upper bound for $\xi^{0}_{N}(t)$. From the second and third equations in \eqref{majorN}, we have
\begin{align}
 \mathbb{E}\left\|\bar{x}_{t}^{(N)}- x_{t}^{[N]}\right\|_{1}&=\sum_{k=1}^{K}\mathbb{E}\left|\bar{x}^{k,(N)}_{t}-x^{(N_{k})}_{t} \right| \notag \\ & \leq  \int_{0}^{t}\Bigg[\sum_{k=1}^{K}\mathbb{E} \left|(A_{k}-R_{k}^{-1}(B_{k})^{2}(\Pi_{k,11}(s)+\mathbb{S} _{k,11}))(\bar{x}^{k,(N)}_{s}-x^{(N_{k})}_{s}) \right| \notag\\&+\sum_{k=1}^{K}\mathbb{E}\left|F_{k}^{\pi}\bar{x}^{(N)}_{s}-F_{k}^{\pi^{(N)}}x_{s}^{[N]} \right| \Bigg]ds+\sum_{k=1}^{K}\frac{1}{N_{k}} \mathbb{E}\left|\sum_{i\in \mc {I}_k }\int_{0}^{t}\sigma _{k}(s)dw^{i}_{s} \right|.\label{step1}   
\end{align}
Since the coefficients are bounded, we have 
\begin{equation}
     \sum_{k=1}^{K}\mathbb{E} \left|(A_{k}-R_{k}^{-1}(B_{k})^{2}(\Pi_{k,11}(t)+\mathbb{S} _{k,11}))(\bar{x}^{k,(N)}_{t}-x^{(N_{k})}_{t}) \right| \lesssim  \mathbb{E}\left\|\bar{x}_{t}^{(N)}- x_{t}^{[N]}\right\|_{1}.\label{step2} 
\end{equation} 
Moreover, following the same approach as in \eqref{pixt}, we obtain
\begin{equation}
\sum_{k=1}^{K}\mathbb{E}\left|F_{k}^{\pi}\bar{x}^{(N)}_{t}-F_{k}^{\pi^{(N)}}x_{t}^{[N]} \right| \lesssim  \mathbb{E}\left\|\bar{x}_{t}^{(N)}- x_{t}^{[N]}\right\|_{1}+\tau_{N}.\label{step3} 
\end{equation}
Therefore, we have 
\begin{align}
\mathbb{E}\left\|\bar{x}_{t}^{(N)}- x_{t}^{[N]}\right\|_{1} & \leq C\int_{0}^{t} \left [\mathbb{E}\left\|\bar{x}_{s}^{(N)}- x_{s}^{[N]}\right\|_{1}+\tau_{N}\right ]ds+\sum_{k=1}^{K}\frac{1}{N_{k}} \mathbb{E}\left|\sum_{i\in \mc {I}_k }\int_{0}^{t}\sigma _{k}(s)dw^{i}_{s} \right| \notag \\ &\leq
  C\int_{0}^{t}\xi^{0}_{N}(s)ds+\sum_{k=1}^{K}\frac{1}{N_{k}} \mathbb{E}\left|\sum_{i\in \mc {I}_k }\int_{0}^{t}\sigma _{k}(s)dw^{i}_{s}.\right|\label{step4}
\end{align}
By following similar steps as in \eqref{step1}--\eqref{step4} for $\mathbb{E}\left|x^{0}_{t}-x^{0,(N)}_{t} \right|$ and $\mathbb{E}\left\|\bar{x}_{t}-\bar{x}_{t}^{(N)} \right\|_{1}$, we obtain the inequality
\begin{equation} \label{gnineq}
\xi^{0}_{N}(t) \lesssim \kappa^{0} _{N}(t) +\int_{0}^{t}\xi^{0}_{N}(s)ds    
\end{equation}
where 
\begin{equation}
\kappa^{0}_{N}(t)=\tau_{N}+\sum_{k=1}^{K}\frac{1}{N_{k}} \mathbb{E}\left|\sum_{i\in \mc{I}_k }\int_{0}^{t}\sigma _{k}(s)dw^{i}_{s} \right|.    
\end{equation}
Note that $\sum_{i\in \mc{I}_k }\int_{0}^{t}\sigma _{k}(s)dw_{s}^i\sim \mathcal{N}(0,N_{k}\int_{0}^{t}\sigma _{k}^{2}(s)ds)$ and hence $\left|\sum_{i\in \mc{I}_k }\int_{0}^{t}\sigma _{k}(s)dw^{i}_{s} \right|$ follows a folded normal distribution. We then have
\begin{equation}
\frac{1}{N_{k}} \mathbb{E}\left|\sum_{i\in \mc{I}_k }\int_{0}^{t}\sigma _{k}(s)dw^{i}_{s} \right|\lesssim\frac{1}{\sqrt{N_{k}}} \lesssim \frac{1}{\sqrt{N}}, 
\end{equation}
where the second inequality holds due to the boundedness of $\frac{1}{\sqrt{\pi^{(N)}_{k}}}$, so that  
\begin{equation}
\kappa^{0} _{N}(t)\lesssim \tau_{N}+\frac{1}{\sqrt{N}},  \qquad \forall t\in \mc{T}.
\end{equation}
We use Grönwall's inequality to write \eqref{gnineq} as  
\begin{equation}
 \xi^{0}_{N}(t) \leq C\kappa ^{0}_{N}(t)+Ce^{Ct}\int_{0}^{t}e^{-Cs}\kappa ^{0} _{N}(s)ds.  
\end{equation}
The above expression indicates that $\{\xi^{0}_{N}(t), N=1, 2, \dots\}$ forms a sequence converging to $0$ as $N \rightarrow \infty$. Therefore, from \eqref{J0N}, we have
\begin{equation} \label{epmaj}
J_{0}^{(N)}(u^{0},u^{-0,*}) \leq J_{0}^{\infty }(u^{0},u^{-0,*})+ \varepsilon_{N},
\end{equation}
where $\varepsilon_{N}= o(\tfrac{1}{\sqrt{N}}) + o(\tau_N)$. Note that \eqref{epmaj} holds for both the optimal strategy $u^{0,*}$ and an arbitrary strategy $u^{0}$ for the major agent. Furthermore, we have
\begin{align}
J_{0}^{(N)}(u^{0,*},u^{-0,*}) &\leq J_{0}^{\infty }(u^{0,*},u^{-0,*})+ \varepsilon_{N} \notag\\&\leq  J_{0}^{\infty }(u^{0},u^{-0,*})+ \varepsilon_{N}\notag\\& \leq J_{0}^{(N)}(u^{0},u^{-0,*})+2 \varepsilon_{N}, 
\end{align}
where the second inequality holds since $u^{0,*}$ represents the Nash equilibrium strategy for the infinite-population system, and the third inequality follows from \eqref{J0N}. This concludes the proof of the $\epsilon$-Nash property for the major agent.

\textit{(II) $ \epsilon$-Nash property for the representative minor agent $i$}: 
Assuming that the major agent and all minor agents, except minor agent $i$ in subpopulation $1$, follow the Nash equilibrium strategies $\{u^{0,*}, u^{1,*},\dots, u^{i-1,*}, u^{i+1,*},\dots, u^{i+1,*}\}$ as outlined in Theorem \ref{Nash Equilibrium}, while minor agent $i$ adopts an arbitrary strategy $u^i$, we introduce the following finite-population and infinite-population systems for minor agent $i$. These systems share the same initial conditions.

\begin{itemize}
\item \textit{Infinite-Population System:} The minor agent's state, the major agent's state and the mean field are represented, respectively, by $x^i_t$, $x^{0}_{t}$ and $(\bar{x}_{t})^\intercal := [(\bar{x}^{1}_{t})^\intercal, \dots, (\bar{x}^{K}_{t})^\intercal]$. The dynamics are given by
\begin{align} \label{minorinf}
\begin{cases}
dx^{i}_{t}&=[A_{1}x^{i}_{t}+F_{1}^{\pi}\bar{x}_{t}+G_{1}x^{0}_{t}+B_{1}u^{i}_{t}]dt+\sigma _{1}(t)dw^{i}_{t}\\
dx^{0}_{t} &= [(A_0-R_{0}^{-1}B_{0}^{2}(\Pi _{0,11}(t)+\mathbb{S} _{0,11}))x^{0}_{t} -R_{0}^{-1}B_{0}^{2}(\Pi _{0,12}(t)+\mathbb{S} _{0,12}^{\intercal})\bar{x}_{t}\\
&\hspace{3.5cm}+F_0^{\pi}\bar{x}_{t}+b_0+B_0R_k^{-1}\bar{n}_0-R_0^{-1} (B_0)^{2} s_0]dt + \sigma_0(t)dw^{0}_{t} \\
d\bar{x}^{k}_{t}&=[(A_{k}-R^{-1}_{k}(B_{k})^{2}(\Pi_{k,11}(t)+\mathbb{S} _{k,11}))\bar{x}^{k}_{t}-R^{-1}_{k}(B_{k})^{2}(\Pi_{k,13}(t)+\mathbb{S} _{k,13}^{\intercal})\bar{x}_{t}\\
&\hspace{8cm}+\bar{G}_{k}x_{t}^{0}+F_{k}^{\pi}\bar{x}_{t}+\bar{m}_{k}(t)]dt,   \\
\end{cases}     
\end{align}
where the coefficients $\bar{G}_k$ and $\bar{m}_k$ are, respectively, given by \eqref{Pinash} and \eqref{snash}. 
 The cost functional of minor agent $i$ belonging to subpopulation $1$ is given by
\begin{align}
&J_{i}^{\infty }(u^{i},u^{-i,*})=\mathbb{E}[ \exp(\int_{0}^{T}f^{i}(x^{i}_{t},u^{i}_{t},\pi \bar{x}_{t}+x_{t}^{0}+\eta_{1})dt+g^{i}(x^{i}_{T},\pi \bar{x}_{T}+x_{T}^{0}+\eta_{1})) ]  \notag \\ &
f^{i}(x^{i}_{t},u^{i}_{t},v_{t})= \frac{1}{2} Q_{1}(x^{i}_{t}-v_{t})^{2}+S_{1}u_{t}^{i}(x^{i}_{t}-v_{t}) +\frac{1}{2}R_{1}(u^{i}_{t})^{2}\notag \\ &
g^{i}(x^{i}_{T},v_{T})=\widehat{Q}_{1}(x^{i}_{T}-v_{T})^{2}.
\end{align}  
\item \textit{Finite-Population System:} This system consists of one major agent and $N$ minor agents. We represent the state of minor agent $i$ belonging to subpopulation $1$ by $x^{i,(N)}_{t}$, the major agent's state by $x^{0,(N)}_{t}$, the average state of subpopulation $1$ by $x^{(N_1)}$, and the average state of subpopulation $k\neq 1$ by $x^{(N_k)}$ (as defined in \eqref{subave}). Furthermore, we introduce the vector process $(\bar{x}^{(N)})^{\intercal}= [\bar{x}^{1,(N)}_{t}, \bar{x}^{2,(N)}_{t},\dots, \bar{x}^{K,(N)}_{t}]$, which is calculated using the mean-field equation \eqref{mfeq}, using the vector of average states $(x^{[N]})^{\intercal}= [(x^{(N_1)})^{\intercal}, (x^{(N_2)})^{\intercal},\dots, (x^{(N_K)})^{\intercal}]$. These processes satisfy
\begin{align} \label{minorN}
\begin{cases}
dx^{i,(N)}_{t}&=[A_{1}x^{(N_{1})}_{t}+F_{1}^{\pi^{(N)}}\bar{x}_{t}^{(N)}+G_{1}x^{0,(N)}_{t}+B_{1}u^{i}_{t}]+\sigma_{1}(t)dw^{i}_{t}\\
dx^{0,(N)}_{t} &= [(A_0-R_{0}^{-1}B_{0}^{2}(\Pi _{0,11}(t)+\mathbb{S} _{0,11}))x^{0,(N)}_{t} -R_{0}^{-1}B_{0}^{2}(\Pi _{0,12}(t)+\mathbb{S} _{0,12}^{\intercal})\bar{x}_{t}^{(N)}\\&\hspace{3cm}+F_0^{\pi^{(N)}}x_{t}^{[N]}+\bar{m}_0(t)]dt + \sigma_0(t)dw^{0}_{t} \\
dx^{(N_{1})}_{t}&=[(A_{1}-R_{1}^{-1}B_{1}^{2}(\Pi_{1,11}(t)+\mathbb{S} _{1,11}))x^{(N_{1})}_{t}-R_{1}^{-1}B_{1}^{2}(\Pi_{1,13}(t)+\mathbb{S} _{1,13}^{\intercal})\bar{x}^{(N)}_{t}\\&\hspace{1cm}+\bar{G}_{1}x^{0,(N)}_{t}+F_{1}^{\pi^{(N)}}x_{t}^{[N]}+\bar{m}_{1}(t)]dt +\frac{1}{N_{1}}\sum_{i\in \mc{I}_{1}}\sigma _{1}(t)dw^{i}_{t}+e_t^{u^{i,*}, u^{i}}dt\\
dx^{(N_{k})}_{t}&=[(A_{k}-R_{k}^{-1}(B_{k})^{2}(\Pi_{k,11}(t)+\mathbb{S} _{k,11}))x^{(N_{k})}_{t}-R_{k}^{-1}(B_{k})^{2}(\Pi_{k,13}(t)+\mathbb{S} _{k,13}^{\intercal})\bar{x}^{(N)}_{t}\\&\hspace{2cm}+\bar{G}_{k}x^{0,(N)}_{t}+F_{k}^{\pi^{(N)}}x_{t}^{[N]}+\bar{m}_{k}(t)]dt +\frac{1}{N_{k}}\sum_{i\in \mc{I}_{k}}\sigma _{k}(t)dw^{i}_{t},\qquad k \neq 1\\
d\bar{x}^{k,(N)}_{t}&=[(A_{k}-R^{-1}_{k}(B_{k})^{2}(\Pi_{k,11}(t)+\mathbb{S} _{k,11}))\bar{x}^{k,(N)}_{t}-R^{-1}_{k}(B_{k})^{2}(\Pi_{k,13}(t)+\mathbb{S} _{k,13}^{\intercal})\bar{x}^{(N)}_{t}\\ &\hspace{5cm}+\bar{G}_{k}x_{t}^{0,(N)}+F_{k}^{\pi}\bar{x}^{(N)}_{t}+\bar{m}_{k}(t)]dt,
\end{cases}  
\end{align}
where the average state $x^{(N_{1})}_{t}$ of subpopulation $1$ involves the term 
\begin{align}
e_t^{u^{i,*}, u^{i}}:=-\frac{B_{1}}{N_{1}}( u^{i,*}_{t}-u^{i}_{t}),
\end{align}
where $u^{i,*}$ represents the Nash equilibrium strategy of agent $i$, as given by \eqref{optconminor} for $k=1$. The inclusion of this term accounts for the arbitrary strategy adopted by minor agent $i$. Notably, this term becomes zero when the agent chooses to follow the Nash equilibrium strategy. The cost functional of minor agent $i$ is given by
\begin{align}
J_{i}^{(N)}(u^{i},u^{-i,*})=\mathbb{E}\Big[ \exp\Big(\int_{0}^{T}f^{i}(x^{i,(N)}_{t},  u^{i}_{t}, &\pi^{(N)}x_{t}^{[N]}+x^{0,(N)}_{t}+\eta_{1})dt\notag \\ &+g^{i}(x^{i,(N)}_{T},\pi^{(N)}x_{T}^{[N]}+x^{0,(N)}_{T}+\eta_{1})\Big)\Big].  
\end{align}  
\end{itemize}
Following a similar approach as for the major agent's problem, we obtain
\begin{align}
\left|J_{i}^{(N)}(u^{i},u^{-i,*})-J_{0}^{\infty }(u^{i},u^{-i,*}) \right| \notag &\lesssim  \int_{0}^{T}\mathbb{E}\left [\left|x^{i}_{t}-x^{i,(N)}_{t} \right|+\left|x^{0}_{t}-x^{0,(N)}_{t} \right|+\left|\pi \bar{x}_{t}-\pi^{(N)}x_{t}^{[N]}  \right|\right ]dt\notag\\ &\hspace{0.5cm}+\mathbb{E}\left|x^{i}_{T}-x^{i,(N)}_{T} \right| +\mathbb{E}\left| \pi \bar{x}_{T}-\pi^{(N)}x_{T}^{[N]} \right|+\mathbb{E}\left|x^{0}_{T}-x^{0,(N)}_{T} \right| \notag\\
&\lesssim  \int_{0}^{T}\xi^{i}_{N}(t)dt +\xi^{i}_{N}(T), 
\end{align}
where
\begin{align}
\xi^{i}_{N}(t):= &\mathbb{E}\left|x^{i}_{t}-x^{i,(N)}_{t} \right|+\mathbb{E}\left|x^{0}_{t}-x^{0,(N)}_{t} \right|+ \mathbb{E}\left\|\bar{x}_{t}-\bar{x}_{t}^{(N)} \right\|_{1}+\mathbb{E}\left\|\bar{x}_{t}^{(N)}- x_{t}^{[N]}\right\|_{1}+\tau_{N}.
\end{align}
Again, as in the case of the major agent, we have 
\begin{equation}
\xi^{i}_{N}(t) \leq C\kappa^{i}_{N}(t)+Ce^{Ct}\int_{0}^{t}e^{-Cs}\kappa^{i} _{N}(s)ds,    
\end{equation}
where
\begin{align}
\kappa^{i}_{N}(t)=&\tau_{N}+\sum_{k=1}^{K}\frac{1}{N_{k}} \mathbb{E}\left|\sum_{i\in \mc{I}_k }\int_{0}^{t}\sigma _{k}(s)dw^{i}_{s} \right| +\mathbb{E}\left|e^{u^{i,*}, u^{i}}_t \right|.
\end{align}
Note that, according to our assumptions,  
\begin{equation}
\mathbb{E}\left|e^{u^{i,*}, u^{i}}_t \right| \lesssim \frac{1}{N}.   \end{equation}
Thus, $\kappa^{i}_{N}(t) \rightarrow 0$ as $N \rightarrow \infty,\, \forall t\in \left [ 0,T \right ]$. Then, following similar steps as in the major agent's problem, we can show that $J_{i}^{(N)}(u^{i,*},u^{-i,*}) \leq   J_{i}^{(N)}(u^{i},u^{-i,*})+\varepsilon_{N}$, where $\varepsilon_N = o(\tfrac{1}{\sqrt{N}}) + o(\tau_N)$.
\end{proof}

\subsection{A toy model}
We consider a toy model in this section, which is supposed to be the simplest one can appear. The dynamic of the system is given by 
\begin{align} \label{simpex}
&dx^{0}_{t}=(x^{0}_{t}+x^{(N)}_{t}+u^{0}_t)dt+\sigma _{0}dw^{0}_{t}, 
 J_{0}^{(N)}(u^{0},u^{-0})=\mathbb{E} \bigg[\exp\frac{1}{2}\int_{0}^{T}(x^{0}_{t}-x^{(N)}_{t})^2+(u^{0}_{t})^2dt\bigg)\bigg] \\ 
&dx^{i}_{t}=(x^{i}_{t}+x^{(N)}_{t}+x^{0}_{t}+u^{i}_{t})dt+\sigma dw^{i}_{t}, J_{i}^{(N)}(u^{i},u^{-i})=\mathbb{E}\bigg[ \mathrm{exp}\frac{1}{2}\int_{0}^{T}(x^{i}_{t}-x^{0}_{t}-x^{(N)}_{t})^2 +(u^{i}_{t})^2 dt\bigg]  \notag
\end{align}
We only have one type of minor agent here and the dimension is just one for all agents. The pre-assumed $\Xi(t),\varsigma(t)$ in \eqref{feedbackubar_major} are just a $\mbR^2$ and real valued functions,respectively. Let us assume $\Xi (t)=\begin{bmatrix}
\Xi_1 (t) & \Xi_2 (t)
\end{bmatrix}$. We now look at the major agent's problem (see \eqref{dymajorex}-\eqref{eqmajor}) according to the dynamic.
 \begin{equation}
dX^{0}_{t} = \left((\widetilde{A}_0+\widetilde{B}_{0} \Xi) \,X^{0}_{t} + \mb{B}_0\, u^0_t +\widetilde{B}_{0}\varsigma(t)\right )dt + \Sigma_{0} dW^{0}_{t}.
 \end{equation}
 They still have the same form but matrices are reduced to the following.
\begin{gather}
 \widetilde{A}_0 = \left[ \begin{array}{cc}
1 & 1 \\
1 &  2
\end{array} \right]\!,
\quad \mb{B}_0=\left[ \begin{array}{c} 1 \\ 0  \end{array}\right]
\!,
\quad
 \widetilde{B}_0 = \left[ \begin{array}{c} 0 \\ 1  \end{array}\right]\!, \Sigma_0= \left[ \begin{array}{cc}
\sigma_0 & 0 \\
0 &  0
\end{array} \right],
W^{0}_{t} = \left[ \begin{array}{c}
w^{0}_{t}\\
0
\end{array} \right]\!.
\end{gather}

 \begin{equation}
\widetilde{A}_0+\widetilde{B}_{0} \Xi= \left[ \begin{array}{cc}
1 & 1 \\
1+\Xi_1(t) &  2+\Xi_2(t)
\end{array} \right]
 \end{equation}
 The limiting cost functional (see \eqref{costinmaj}) of major agent is changed to 
 \begin{align} 
J_{0}^{\infty}(u^{0})&=\mathbb{E}\Bigg[ \mathrm{exp}(\frac{1}{2}\int_{0}^{T}\left<\mathbb{Q} _{0}X_{s}^{0},X_{s}^{0}\right>+(u^{0}_s)^2dt) \Bigg] 
,\quad  \mathbb{Q}_0 = \left[ \begin{array}{cc}
1 & -1 \\
-1&  1
\end{array} \right]
\end{align} 
By following the same method we have 
 \begin{equation}\label{optconmajorsijsimp}
 u^{0,*}_{t} = -\mathbb{B}_0^\intercal \big(\Pi_0(t) X^{0}_{t} + s_0(t) \big)=-(\Pi_{0,11}x^{0}_{t}+\Pi_{0,12}\bar{x}_{t}+s_0(t))
 \end{equation}
 and
  \begin{multline} \label{eqmajorsimp}
 \Big(\dot{\Pi}_{0}(t)+\mathbb{Q}_{0}+(\widetilde{A}_0+\widetilde{B}_{0}\Xi(t))^{\intercal}\Pi_{0} (t)+\Pi_{0} (t)(\widetilde{A}_0+\widetilde{B}_{0}\Xi(t))-\Pi_{0}(t)\mb{B}_0\mb{B}_0^{\intercal}\Pi_{0}(t)+\delta_{0}\Pi_{0}(t) \Sigma_{0}\Sigma_{0} ^{\intercal}\Pi_{0}(t)\Big)X^{0}_{t} \\+\Big(\dot{s}_{0}(t)+( (\widetilde{A}_0+\widetilde{B}_{0}\Xi(t))^{\intercal}-\Pi_{0}(t)\mb{B}_0\mb{B}_0^{\intercal}+\delta_{0}\Pi_{0}(t) \Sigma_{0}\Sigma_{0} ^{\intercal}  )s_{0}(t)+\Pi_{0} (t)\widetilde{B}_{0}\varsigma(t)\Big)=0.    
\end{multline}
In this case, $\Pi_0(t)$ is just a $2 \times 2$ matrix, and $s_0(t)$ is a $\mbR^2$ vector. Now we move to minor agent's problem, see  \eqref{dyminet}-\eqref{eqminor}.

\begin{equation}
dX^{i}_t=( \grave{A}_{k}(t)X^{i}_t+\mathbb{B}_{k}u^{i}_{t}+\grave{M}_{k}(t))dt+\Sigma _{k}dW^{i}_{t}  \end{equation}
where
\begin{gather}
 \grave{A}_k(t) = \left[ \begin{array}{cc}1 ,1,1\\ 0 &\widetilde{A}_0+\widetilde{B}_{0} \Xi+\mb{B}_0\mb{B}_0^{\intercal}\Pi_{0}(t)) \end{array} \right], \,\,\,
  \grave{M}_k(t) = \left[ \begin{array}{c} 0 \\ -\mb{B}_0\mb{B}_0^{\intercal}s_{0}(t)+\widetilde{B}_{0}\varsigma(t) \end{array}\right].\label{sysMatMinorsimp}
\end{gather}

\begin{equation}
\widetilde{A}_0+\widetilde{B}_{0} \Xi+\mb{B}_0\mb{B}_0^{\intercal}\Pi_{0}(t))= \left[ \begin{array}{cc}1+\Pi_{0,11} ,1+\Pi_{0,12}\\ 1+\Xi_1(t) ,2+\Xi_2(t) \end{array} \right], \,\,\,
  -\mb{B}_0\mb{B}_0^{\intercal}s_{0}(t)+\widetilde{B}_{0}\varsigma(t) = \left[ \begin{array}{c} s_{0} \\ \varsigma(t)\end{array}\right]
\end{equation}
The limiting dynamic is given by 
\begin{equation}
J_{i}^{\infty}(u^{i})=\mathbb{E}\Bigg[ \mathrm{exp}(\frac{1}{2}\int_{0}^{T}\left<\mathbb{Q} X_{s}^{i},X_{s}^{i}\right>+(u^{i}_s)^2dt) \Bigg] 
,\quad  \mathbb{Q} =\begin{bmatrix}
1 &-1  &-1 \\ 
 -1&1  & 1\\ 
 -1& 1 & 1
\end{bmatrix}
\end{equation}
Similarly, we have
\begin{equation} \label{optconminorsimp}
 u^{i,*}_t = - \mathbb{B}^\intercal \left(\Pi(t) \,X^{i,*}_t + s(t) \right)=-(\Pi_{11}x^{i}_{t}+\Pi_{12}x^{0}_{t}+\Pi_{13}\bar{x}_{t}+s(t)) ,
\end{equation}
and
\begin{multline} \label{eqminorsimp}
\Big(\dot{\Pi}(t)+\mathbb{Q}+\grave{A}(t)^{\intercal}\Pi (t)+\Pi (t)\grave{A}(t)-\Pi(t)\mb{B} \mb{B}^{\intercal}\Pi(t)+\delta_{k}\Pi_{k}(t) \Sigma_{k}\Sigma_{k} ^{\intercal}\Pi_{k}(t)\Big)X^{i}_{t} \\+\Big(\dot{s}(t)+( \grave{A}(t)^{\intercal}-\Pi(t)\mb{B}\mb{B}^{\intercal}+\delta\Pi(t) \Sigma\Sigma ^{\intercal}  )s(t)+\Pi (t)\grave{M}(t)\Big)=0.   
\end{multline}
$\Pi(t)$ is a $3 \times 3$ matrix, and $s(t)$ is a $\mbR^3$ vector. From \eqref{optconminorsimp}, we can derive
\begin{equation}
\bar{u}_{t}=-((\Pi_{11}+\Pi_{13})\bar{x}_{t}+\Pi_{12}x^{0}_{t}+s_1(t))
\end{equation}
Thus
\begin{equation}
\Xi(t)=\begin{bmatrix}
-\Pi_{12}(t) & -\Pi_{11}(t)-\Pi_{13}(t),
\end{bmatrix},    
\varsigma(t)=s_1(t)
\end{equation}
Then we can recover the mean filed system in Theorem \eqref{Nash Equilibrium}. 
In particular, the mean field equation \eqref{MF_uperturbed} is reduced to 
\begin{equation}
d\bar{x}_{t} = \left(\ (2-\Pi_{11}-\Pi_{13})\bar{x}_{t} + (1-\Pi_{12})x^{0}_{t}  \right)dt,    
\end{equation}

\section{Conclusions}\label{sec_conclusion}
In this paper, we began by developing a variational analysis to solve LQG risk-sensitive optimal control problems, incorporating risk sensitivity through exponential cost functionals. We then extended our investigation to LQG risk-sensitive MFGs that involve a major agent and a large number of minor agents. The results obtained contribute to a deeper understanding of the implications of risk sensitivity in both LQG single-agent systems and MFGs. 

\bibliographystyle{elsarticle-num-names} 

\bibliography{sample}

\end{document}